\newtheorem{thm}{Theorem}[section]
\newtheorem{prop}[thm]{Proposition}
\newtheorem{lem}[thm]{Lemma}
\newtheorem{cor}[thm]{Corollary}
\newtheorem{conj}[thm]{Conjecture}
\theoremstyle{definition}
\newtheorem{defn}[thm]{Definition}
\newtheorem{rem}[thm]{Remark}
\newtheorem{exmp}[thm]{Example}
\newcommand{\abs}[1]{\lvert{#1}\rvert}
\renewcommand{\bar}[1]{\overline{#1}}
\newcommand{\set}[2]{\{\,{#1} \mid {#2} \,\}}
\newcommand{\bigset}[2]{ \bigl\{ \, {#1} \bigm| {#2} \, \bigr\} }
\renewcommand{\emptyset}{\varnothing}
\newcommand{\field}[1]{\mathbb{#1}}
\newcommand{\Z}{\field{Z}}
\DeclareMathOperator{\CAT}{CAT}
\DeclareMathOperator{\diam}{diam}
\begin{document}

\title[On distortion of normal subgroups]{On distortion of normal subgroups}

\author{Hung Cong Tran}
\address{Department of Mathematics\\
 The University of Georgia\\
1023 D. W. Brooks Drive\\
Athens, GA 30605\\
USA}
\email{hungtran280687@gmail.com}

\date{\today}

\begin{abstract}
We examine distortion of finitely generated normal subgroups. We show a connection between subgroup distortion and group divergence. We suggest a method computing the distortion of normal subgroups by decomposing the whole group into smaller subgroups. We apply our work to compute the distortion of normal subgroups of graph of groups and normal subgroups of right-angled Artin groups that induce infinite cyclic quotient groups. We construct normal subgroups of $\CAT(0)$ groups introduced by Macura and introduce a collection of normal subgroups of right-angled Artin groups. These groups provide a rich source to study the connection between subgroup distortion and group divergence on $\CAT(0)$ groups.
\end{abstract}

\subjclass[2000]{%
20F67, 
20F65} 
\maketitle

\section{Introduction}
Subgroup distortion is a famous tool to study the geometric connection between a finitely generated group and its finitely generated subgroups. A metric on a subgroup can be inherited from the metric of the whole group. Also a finitely generated subgroup can be a metric space itself. Therefore, the subgroup distortion notion was introduced to measure the difference between the two metric structures of a finitely generated subgroup. In this article, we only focus on distortion of finitely generated normal subgroups. 

The divergence is a quasi-isometry invariant which arose in the study of non-positively curved manifolds and metric spaces. Roughly speaking, the divergence of a connected metric space measures the complement distance of a pair of points in a sphere as a function of the radius. For example, the divergence of the Euclidean plane is linear and the divergence of the hyperbolic plane is exponential. The concept of divergence on finitely generated groups is defined via their Cayley graphs.

Connection between group divergence and distortion of normal subgroups was first shown by Gersten \cite{MR1254309}. He proved the distortion of a finitely generated normal subgroup $H$ in a finitely generated $G$ is bounded below by the divergence of $G$ when the quotient group $G/H$ is an infinite cyclic group. We generalize the result of Gersten by the following theorem:

\begin{thm}
\label{ip1}
Let $G$ be a finitely generated group and $H$ a finitely generated infinite index infinite normal subgroup of $G$. Then the divergence of $G$ is dominated by the subgroup distortion of $H$ in $G$.
\end{thm}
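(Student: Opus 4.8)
The plan is to fix finite generating sets for $G$ and $H$ and to use the standard reformulation of both invariants inside $\Cayley(G)$. Writing $\mathrm{Dist}(n)=\max\{\,\abs{h}_H : h\in H,\ \abs{h}_G\le n\,\}$ for the distortion and $\mathrm{div}_G(r)$ for the length of a shortest path in $\Cayley(G)\setminus B(1,\delta r)$ joining two points of the sphere $S(1,r)$ (for a fixed $\delta\in(0,1/2)$), the assertion is that $\mathrm{div}_G\preceq\mathrm{Dist}$. Thus for \emph{every} pair $x,y\in S(1,r)$ I must produce a path from $x$ to $y$ avoiding $B(1,\delta r)$ of length at most $\mathrm{Dist}(Cr)$ for a uniform constant $C$. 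Two features of the hypotheses drive the argument: normality makes $G/H$ a finitely generated group onto which the projection $\pi\colon\Cayley(G)\to\Cayley(G/H)$ is a surjective $1$-Lipschitz homomorphism, so $\abs{\pi(z)}_{G/H}\le\abs{z}_G$; and, since $H$ has infinite index, $G/H$ is infinite, giving room to move away from $H$.

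First I would record the estimate that converts ambient geometry into distortion: if $z,z'$ lie in a common coset $wH$, then $z^{-1}z'\in H$ and $\abs{z^{-1}z'}_H\le\mathrm{Dist}(\abs{z}_G+\abs{z'}_G)$, so a \emph{fiber move} between two points of one coset is realised by a path of length at most $c_0\,\mathrm{Dist}(\abs{z}_G+\abs{z'}_G)$, where $c_0$ bounds the $G$-length of the chosen generators of $H$. The detour is then built in three legs,
\[
x\ \longrightarrow\ x^{+}=xh\ \longrightarrow\ x^{+}v=xhv\ \longrightarrow\ y,
\]
where $h\in H$ is chosen with $\abs{h}_G$ of order $r$ (pushing $x^{+}$ to distance of order $r$ from $1$) and $v$ is a lift of a \emph{geodesic} of $G/H$ from $\pi(x)$ to $\pi(y)$, so that $xhv$ and $y$ share the coset $\pi(y)H$. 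The outer legs are fiber moves, each of length at most $c_0\,\mathrm{Dist}(Cr)$ by the estimate above; the middle leg has length $d_{G/H}(\pi(x),\pi(y))\le 2r$. All displacements stay of $G$-length $O(r)$, and since distortion dominates the linear function the additive $O(r)$ terms are absorbed into $\mathrm{Dist}(Cr)$.

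The hard part will be guaranteeing that this path avoids $B(1,\delta r)$, and this is where the $1$-Lipschitz (rather than bi-Lipschitz) nature of $\pi$ bites: \emph{far from $1$} cannot be read off from \emph{far from $H$}. A naive lift of a quotient geodesic dips back toward $1$ exactly where its image passes near the identity coset, and, worse, when $G\to G/H$ is a genuinely twisted extension the conjugation accompanying the middle leg can contract the fiber height gained in the first leg, so $xhp$ for an initial segment $p$ of $v$ need not stay far from $1$. To control this I would arrange that at every vertex at least one of two safety mechanisms holds: a leg of length $<(1-\delta)r$ emanating from a point of $S(1,r)$ automatically avoids $B(1,\delta r)$, since its vertices lie at distance $\ge r-(1-\delta)r=\delta r$ from $1$; while a leg carried out in a coset at quotient-distance $\ge\delta r$ from the identity coset is safe because $\abs{z}_G\ge\abs{\pi(z)}_{G/H}$. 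Balancing these—pushing along the fiber $H$ when $\pi(x)$ is near the identity coset and along the quotient when it is not, possibly splitting the middle leg, so that the push-out stays shorter than $(1-\delta)r$ yet reaches a safe region—is the delicate point. It is precisely here that normality (to keep the corrections inside a single $H$-coset after the quotient displacement) and the distortion bound (to keep those corrections of length $\le\mathrm{Dist}(Cr)$) must be combined. Once every vertex is certified outside $B(1,\delta r)$, summing the three legs gives $\mathrm{div}_G(r)\le\mathrm{Dist}(Cr)+O(r)\preceq\mathrm{Dist}$, which is the desired domination.
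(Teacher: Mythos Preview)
Your overall architecture is correct and matches the paper's: project to $G/H$, use that the projection is $1$-Lipschitz so that cosets far from $H$ are entirely outside the ball, and assemble the detour from short ``quotient'' legs and long ``fiber'' legs whose length is controlled by distortion. You have also correctly isolated the real difficulty: a fiber move $x\to xh$ inside the coset $xH$ is only safe when $\pi(x)$ is far from the identity of $G/H$, and for an arbitrary $x\in S(1,r)$ there is no reason this should hold.

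The gap is that you do not actually resolve this difficulty. Your ``balancing'' paragraph proposes to use the short-leg mechanism (a leg of length $<(1-\delta)r$ from a sphere point is automatically safe) when $\pi(x)$ is near the identity, but a fiber push with $\abs{h}_G\sim r$ is not short, and a short fiber push gains nothing. The phrase ``pushing along the fiber $H$ when $\pi(x)$ is near the identity coset'' is in fact the dangerous case, not the safe one, so the proposed dichotomy does not close. Likewise your third leg, the fiber move in $\pi(y)H$, has exactly the same problem when $\pi(y)$ is near the identity, and nothing in your construction prevents that.

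The paper handles both issues with two devices you do not mention. First, rather than connecting $x$ directly to an arbitrary $y$, it routes \emph{every} $x$ through a single fixed target $g$ chosen with $\abs{g}_S=r$ and $d_S(H,gH)=r$ (this uses $[G:H]=\infty$). The terminal fiber leg then lives in $gH$, which is at distance $r$ from $H$ by construction, so it is automatically outside $B_{r/2}(e)$. Second, for the initial escape from $x$ the paper uses a purely metric trick that does not involve $H$ at all: pick a bi-infinite geodesic through $x$; one of its two rays must avoid $B_{r/2}(e)$, and travelling distance $4r$ along that ray reaches a point $x_1$ with $d(e,x_1)\ge 3r$. From $x_1$ a \emph{short} quotient move (length $\le 2r$, hence safe by your own short-leg mechanism) lands in a coset at quotient-distance $\ge r$ from $H$, and only then is the long fiber move performed. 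In short: geodesic-ray escape first, then a short quotient hop to a far coset, then the distortion-length fiber move, then a short hop into $gH$, then the final fiber move in $gH$. Each leg is certified outside $B_{r/2}(e)$ by one of your two mechanisms, but the ordering and the choice of the fixed target $g$ are what make the certification go through; your three-leg scheme $x\to xh\to xhv\to y$ does not.
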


By the above theorem, we can use the divergence of the whole group as a lower bound for distortion of any finitely generated normal subgroup. Also, we can use subgroup distortion as a tool to compute the upper bound for group divergence. In this paper, we revisit Macura's examples on $\CAT(0)$ groups with polynomial divergence (see \cite{MR3032700}) and show an alternative way using subgroup distortion to compute the upper bound of the divergence of these groups (see Remark \ref{remark1}). 

In \cite{Sisto}, Sisto proved that the divergence of a one-ended nontrivial relatively hyperbolic group is at least exponential. We are not sure if the study of the distortion of finitely generated normal subgroups in finitely generated relatively hyperbolic groups is recorded in the literature. However, the following corollary is a direct application of Theorem \ref{ip1}.

\begin{cor}
Let $G$ be a one-ended relatively hyperbolic group with respect to some collection of proper subgroups and $H$ a finitely generated infinite index infinite normal subgroup of $G$. Then the distortion of $H$ in $G$ is at least exponential.
\end{cor}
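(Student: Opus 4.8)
The plan is to obtain the corollary by directly combining Theorem~\ref{ip1} with Sisto's lower bound on the divergence of one-ended relatively hyperbolic groups. Since every hypothesis needed for each of the two ingredients is already part of the statement, the work amounts to checking that the two domination inequalities chain together in the correct direction. First I would record the structural assumptions on the pair $(G,H)$: the group $G$ is one-ended and hyperbolic relative to a collection of \emph{proper} subgroups, so that the peripheral structure is nontrivial, while $H$ is finitely generated, infinite, and of infinite index in $G$.

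The first step is to invoke Sisto's result \cite{Sisto}: because $G$ is one-ended and its peripheral subgroups are proper, $G$ is a one-ended nontrivial relatively hyperbolic group, and hence its divergence is at least exponential. Writing $\preceq$ for the domination order on functions and $\operatorname{div}_G$ for the divergence of $G$, this says $\exp \preceq \operatorname{div}_G$.

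The second step applies Theorem~\ref{ip1} to the pair $(G,H)$. Its hypotheses are exactly that $H$ be a finitely generated, infinite, infinite-index normal subgroup of the finitely generated group $G$, all of which hold. The theorem then gives that the divergence of $G$ is dominated by the distortion of $H$ in $G$; denoting the latter by $\Delta_H^G$, this reads $\operatorname{div}_G \preceq \Delta_H^G$. Chaining the two inequalities and using transitivity of $\preceq$ yields $\exp \preceq \operatorname{div}_G \preceq \Delta_H^G$, so $\Delta_H^G$ is at least exponential, which is the claim.

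The only point requiring care, and the closest thing to an obstacle, is the bookkeeping with the domination order: one must verify that ``the divergence of $G$ is dominated by the distortion of $H$'' indeed means $\operatorname{div}_G \preceq \Delta_H^G$ rather than the reverse, so that the exponential lower bound transfers upward from $\operatorname{div}_G$ to $\Delta_H^G$ instead of being lost. Once the direction is pinned down, the conclusion is immediate.
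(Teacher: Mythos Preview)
Your proof is correct and matches the paper's approach exactly: the paper presents this corollary as ``a direct application of Theorem~\ref{ip1}'' combined with Sisto's exponential lower bound on the divergence of one-ended nontrivial relatively hyperbolic groups, and that is precisely what you do.
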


We study the upper bound for the distortion of a normal subgroup by decomposing the whole group into smaller subgroups and computing the distortion of the normal subgroup on each of these subgroups.

\begin{thm}
\label{ip2}
Let $G$ be a finitely generated group and $H$ normal subgroup of $G$ with the canonical projection $p:G \rightarrow G{/}H$. Suppose there is a finite collection $\mathcal{A}$ of finitely generated subgroups of $G$ that satisfies the following conditions:
\begin{enumerate}
\item The union of all subgroups in $\mathcal{A}$ generates $G$ and the subgroup $A\cap H$ is finitely generated for each subgroup $A$ in $\mathcal{A}$.
\item For each pair $A$ and $A'$ in $\mathcal{A}$, there is a sequence of subgroups in $\mathcal{A}$ $A=A_1, A_2, \cdots, A_n=A'$ such that $p(A_i\cap A_{i+1})$ is a finite index subgroup of $G/H$ for each $i$ in $\{1,2, \cdots, n-1\}$.
\end{enumerate}
Then, the subgroup $H$ is finitely generated and the distortion $Dist_{G}^H$ is dominated by the function $f(n)=n \max \set{Dist_{A}^{A\cap H}(n)}{A \in \mathcal{A}}$.
\end{thm}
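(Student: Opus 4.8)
The plan is to fix, for every $A\in\mathcal{A}$, a finite generating set $S_A$ of $A$ and a finite generating set $T_A$ of $A\cap H$ (available by (1)); then $S=\bigcup_{A}S_A$ generates $G$, again by (1). Write $Q=G/H$. I first record the structural consequence of the hypotheses that drives everything: for each $A\in\mathcal{A}$ the subgroup $p(A)$ has finite index in $Q$. This is immediate when $\mathcal{A}=\{G\}$, and otherwise it follows by applying (2) to a pair $(A,A')$: the resulting chain has $p(A\cap A_2)$ of finite index in $Q$, and since $p(A\cap A_2)\le p(A)\le Q$ this forces $[Q:p(A)]<\infty$. I also form the finite graph $\Gamma$ whose vertices are the members of $\mathcal{A}$, with an edge joining $A$ and $A'$ exactly when $p(A\cap A')$ has finite index in $Q$; condition (2) says precisely that $\Gamma$ is connected, so $\operatorname{diam}\Gamma$ is bounded.

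The heart of the argument is a rewriting (shadowing) construction. Given $h\in H$ with $|h|_{G}\le n$, choose a geodesic $h=s_1s_2\cdots s_m$ over $S$, with $m\le n$, and record for each letter a subgroup $A(i)\in\mathcal{A}$ with $s_i\in S_{A(i)}$. Using connectivity of $\Gamma$ I refine the label sequence $A(1),\dots,A(m)$ into a walk in $\Gamma$ by inserting, between consecutive letters whose subgroups are not $\Gamma$-adjacent, a bounded block of ``empty letters'' labelled by the vertices of a $\Gamma$-geodesic between them; since $\operatorname{diam}\Gamma$ is bounded this multiplies the length by at most a constant, yielding an expanded factorization of $h$ of length $m'=O(n)$ in which every consecutive pair of labels is an edge of $\Gamma$. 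Writing $g_j$ for the partial products and $q_j=p(g_j)$ for their images, I attach to each index a correcting element $c_j\in G$ with $p(c_j)=q_j$ and $c_0=c_{m'}=e$, and set $\widehat g_j=g_jc_j^{-1}\in H$; this telescopes $h$ into $m'$ factors $\sigma_j=\widehat g_{j-1}^{-1}\widehat g_j=c_{j-1}\,s'_j\,c_j^{-1}\in H$.

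The decisive point is the choice of the $c_j$. Within a maximal run of letters lying in a single subgroup $A$, I build $c_j$ from a section of $p\colon A\to p(A)$ applied to $q_j$, twisted by one of the finitely many (hence bounded) coset representatives for $p(A)$ in $Q$; because $|q_j|_{Q}\le |g_j|_{G}\le m'=O(n)$, such a section realizes $c_j$ inside $A$ with $|c_j|_{A}=O(n)$. For indices interior to a run this makes $\sigma_j$ a bounded conjugate of an element of $A\cap H$ whose $A$-length is $O(n)$, so $|\sigma_j|_{H}\le C\,Dist_{A}^{A\cap H}(O(n))$ with $C$ uniform over the finite correction set. At a subgroup switch along an edge of $\Gamma$ I use that $p(A\cap A')$ has finite index: a bounded coset correction moves $q_j$ into $p(A\cap A')$, and the two sections are chosen to agree there, so the switch contributes only $O(1)$. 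Summing,
\[
|h|_{H}\le\sum_{j=1}^{m'}|\sigma_j|_{H}\le m'\cdot C\max_{A\in\mathcal{A}}Dist_{A}^{A\cap H}(O(n))\preceq n\max_{A\in\mathcal{A}}Dist_{A}^{A\cap H}(n)=f(n),
\]
the domination absorbing the constant $C$, the factor $m'=O(n)$, and the rescaling inside the distortion. Finite generation of $H$ then falls out of the same factorization: each $\sigma_j$ is a product of bounded conjugates of the finitely many generators in $\bigcup_A T_A$ together with the finitely many coset-correction elements (which themselves lie in $H$), so $H$ is generated by this single finite set.

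The step I expect to be the main obstacle is exactly the subgroup switch: arranging the sections for adjacent $A,A'$ to be compatible on $A\cap A'$ so that the two $O(n)$-sized realizations of $q_j$ cancel up to a bounded error. This is where the finite-index hypothesis on $p(A\cap A')$ is essential, since it both bounds the number of coset corrections and guarantees that, after a bounded correction, $q_j$ lands in $p(A\cap A')$. The delicate point to monitor is that $A\cap A'$ need not be finitely generated or undistorted in $A$ or $A'$, so one must never transport the ``bulk'' of a correcting element across the intersection; realizing the bulk of each $c_j$ by the controlled section of its own subgroup, and drawing the correctors themselves from a fixed finite set, is what prevents any compounding of distortion and secures the clean bound $f(n)=n\max_{A}Dist_{A}^{A\cap H}(n)$.
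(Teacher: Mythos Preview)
Your outline is correct and follows essentially the same strategy as the paper: refine a word for $h$ so that consecutive subgroup labels are adjacent in the graph on $\mathcal{A}$, then telescope via correcting elements $c_j$ that project to the partial products and lie (after a bounded coset correction) in $A_i\cap A_{i+1}$ with $O(n)$ length in both $S_{A_i}$ and $S_{A_{i+1}}$. The paper is explicit precisely where you leave ``the two sections are chosen to agree'' as an assertion: it fixes the single finite-index subgroup $F=\bigcap p(A\cap A')\le G/H$, lifts each generator of $F$ to an element of $A\cap A'$ for every edge $(A,A')$, and builds each $c_j$ by spelling $q_j$ (after a bounded correction into $F$) in those lifted generators---this is what gives $|c_j|_{S_A},|c_j|_{S_{A'}}=O(n)$ simultaneously without any hypothesis on $A\cap A'$ itself.
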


The above theorem can be used to compute the distortion of normal subgroups of a simply connected finite graph of groups (see the following corollary).

\begin{cor}
Let $G$ be the fundamental group of a simply connected finite graph of finitely generated groups and $H$ a finitely generated normal subgroup of $G$ not contained in any edge subgroup. We assume that the intersection of $H$ with any vertex subgroup is finitely generated. Then, the distortion $Dist_{G}^H$ is dominated by the function $f(n)=n \max \set{Dist_{A}^{A\cap H}(n)}{A \text{ is a vertex subgroup}}$.
\end{cor}

We also use Theorem \ref{ip2} to compute the distortion of normal subgroups of right-angled Artin groups that induce infinite cyclic quotient groups. 

\begin{cor}
\label{corag}
Let $\Gamma$ be a simplicial connected graph with at least two vertices. Let $p$ be an arbitrary group homomorphism from the right-angled Artin group $A_\Gamma$ to $\Z$ and $N$ the kernel of the map $p$. Let $\Gamma'$ be the induced subgraph of $\Gamma$ with the vertices which are mapped to nonzero numbers in $\Z$ by $p$. If the graph $\Gamma'$ is connected and the star of $\Gamma'$ in $\Gamma$ contains all vertices of $\Gamma$, then the subgroup $N$ is finitely generated and the distortion of $N$ in $A_\Gamma$ is at most quadratic. Moreover, if each vertex of $\Gamma$ is mapped to a nonzero number in $\Z$, then the distortion of the $N$ in $A_\Gamma$ is linear when $\Gamma$ is a join and the distortion is quadratic otherwise. 
\end{cor}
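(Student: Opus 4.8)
The plan is to derive the whole statement from Theorem~\ref{ip2}, supplemented by the divergence estimates of Theorem~\ref{ip1} and a direct computation for the linear case. For finiteness of $N$ and the quadratic upper bound I would apply Theorem~\ref{ip2} to $G=A_\Gamma$, $H=N$, with the collection
\[
\mathcal{A}=\set{A_{\operatorname{st}(v)}}{v\in V(\Gamma')},
\]
where $\operatorname{st}(v)=\{v\}\cup\Lk(v)$ is the closed star and $A_{\operatorname{st}(v)}$ the corresponding special subgroup. Condition~(1) holds because the star of $\Gamma'$ covers $V(\Gamma)$, so every vertex lies in some $\operatorname{st}(v)$ and $\bigcup\mathcal{A}$ generates $A_\Gamma$; condition~(2) holds because $\Gamma'$ is connected, so any $v,v'\in V(\Gamma')$ are joined by an edge-path $v=u_0,\dots,u_k=v'$ in $\Gamma'$, and for consecutive vertices $u_i\in\operatorname{st}(u_i)\cap\operatorname{st}(u_{i+1})$ with $p(u_i)\neq0$, whence $p(A_{\operatorname{st}(u_i)}\cap A_{\operatorname{st}(u_{i+1})})$ is a nontrivial, hence finite-index, subgroup of $\Z$. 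The heart of this part is to show $A_{\operatorname{st}(v)}\cap N$ is undistorted in $A_{\operatorname{st}(v)}$. Since $v$ is adjacent to every vertex of $\Lk(v)$, the star subgroup splits as $A_{\operatorname{st}(v)}=\gen{v}\times A_{\Lk(v)}\cong\Z\times A_{\Lk(v)}$, with $p(v)\neq0$. I would then prove the elementary lemma: if $K$ is finitely generated and $q\colon\Z\times K\to\Z$ is nonzero on the $\Z$-factor (with $a=q(t)$, $\bar q=q|_K$), then $g\mapsto(t^{-\bar q(g)/a},g)$ is an isomorphism from the finite-index subgroup $K_0=\set{g\in K}{\bar q(g)\in a\Z}$ onto $\ker q$, and it is a quasi-isometric embedding because $\bar q$ is Lipschitz. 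This gives linear distortion of each $A_{\operatorname{st}(v)}\cap N$ and, in particular, finite generation; feeding $\max_v Dist_{A_{\operatorname{st}(v)}}^{A_{\operatorname{st}(v)}\cap N}(n)\simeq n$ into Theorem~\ref{ip2} yields that $N$ is finitely generated with $Dist_{A_\Gamma}^N(n)\preceq n\cdot n=n^2$.

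For the ``moreover'' part, every vertex is mapped to a nonzero integer, so $\Gamma'=\Gamma$; as $\Gamma$ is connected the hypotheses of the first part hold and give the quadratic upper bound at once. When $\Gamma$ is connected but \emph{not} a join, I would invoke the known fact (a result of Behrstock and Charney) that a right-angled Artin group on a connected non-join graph has at least quadratic divergence, and combine it with Theorem~\ref{ip1}, which says the divergence of $A_\Gamma$ is dominated by $Dist_{A_\Gamma}^N$. Thus $n^2\preceq\operatorname{divergence}\preceq Dist_{A_\Gamma}^N\preceq n^2$, forcing exactly quadratic distortion.

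When $\Gamma=\Gamma_1*\Gamma_2$ is a join, $A_\Gamma=A_{\Gamma_1}\times A_{\Gamma_2}$ and $p=(p_1,p_2)$ with $p_i=p|_{A_{\Gamma_i}}$ both nonzero. The lower bound is automatic, so the task is the linear \emph{upper} bound, and this is the step I expect to be the main obstacle. The naive route---writing $N\cong(\ker p_1\times\ker p_2)\rtimes\Z$ and calling the factors undistorted---fails, because $\ker p_i$ may be infinitely generated; and Theorem~\ref{ip2} cannot help here since it always costs an extra factor of $n$. Instead I would argue directly: for $w=(g_1,g_2)\in N$ of $A_\Gamma$-length $n$ one has $|g_1|+|g_2|\le n$ and $m:=p_1(g_1)=-p_2(g_2)$ is divisible by the least common multiple of the positive generators of $p_1(A_{\Gamma_1})$ and $p_2(A_{\Gamma_2})$; splitting $w$ into a factor supported on $A_{\Gamma_1}$ (times a compensating power of a fixed element of $A_{\Gamma_2}$) and a factor with trivial first coordinate reduces the estimate to bounding the $N$-length of elements with one coordinate trivial. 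To bound these efficiently I would introduce ``diagonal'' generators pairing each $\Gamma_1$-letter with a compensating power of one fixed element of $A_{\Gamma_2}$ (and symmetrically), exploiting that $\Gamma_1$ and $\Gamma_2$ are joined so these commute: along any word the compensating coordinate, being a product of powers of a single fixed element, telescopes to depend only on the $p$-value of the traversed word, which vanishes on the relevant pieces, so the compensation disappears and the word length stays comparable to $|g_1|+|g_2|$.

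The genuine technical crux in the join case is the number theory of these compensations when individual vertex values are not divisible by the image of $p_2$ (so that a single-letter compensator has a non-integral exponent): one must either pass to the finite-index subgroups of $A_{\Gamma_i}$ on which the relevant divisions are integral---where the total $p$-value of any element of $N$ \emph{is} divisible as required---or combine compensators drawn from both factors, using commutativity of the join to keep every intermediate correction of bounded length. Carrying this out gives $|w|_N\preceq n$, i.e.\ linear distortion, which together with the automatic linear lower bound completes the join case and hence the corollary.
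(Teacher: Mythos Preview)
Your proposal is correct and follows the same three-part strategy as the paper: apply Theorem~\ref{ip2} for the quadratic upper bound, combine Behrstock--Charney with Theorem~\ref{ip1} for the non-join lower bound, and handle the join case by passing to finite-index subgroups and using diagonal compensating generators that telescope. The one cosmetic difference is the choice of collection: the paper takes $\mathcal{A}$ to be the edge subgroups $A_e\cong\Z^2$ for edges $e$ with at least one endpoint in $\Gamma'$ (so that $A_e\cap N$ is a cyclic subgroup of $\Z^2$ and undistortion is immediate), whereas your star subgroups $A_{\operatorname{st}(v)}\cong\Z\times A_{\Lk(v)}$ require your small auxiliary lemma on kernels in $\Z\times K$; both collections satisfy the hypotheses of Theorem~\ref{ip2} for the same connectivity reasons, and in the join case the paper carries out exactly the finite-index reduction you anticipate.
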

In the above corollary, the subgroup $N$ is the Bestvina-Brady subgroup of $A_\Gamma$ if $p$ maps each vertex of $\Gamma$ to 1. We remark that the distortion of Bestvina-Brady subgroups were also computed by the author in \cite{Tran}. The kernel of the group homomorphism $p:A_\Gamma\rightarrow \Z$ was also studied by Papadima-Suciu in \cite{MR2466422}. The divergence of right-angled Artin groups was studied by Behrstock--Charney in \cite{MR2874959} and we used the divergence of the right-angled Artin group $A_\Gamma$ to prove for the lower bound of the distortion of $N$. However, the upper bound of the distortion of $N$ is calculated by using Theorem \ref{ip2}. Moreover, the upper bound of the distortion of $N$ provides an alternative method to compute the upper bound of the divergence of the right-angled Artin group $A_{\Gamma}$.

The notion of algebraic thickness of groups was introduced in \cite{MR2501302} to study the geometry of non-relatively hyperbolic groups. The collection of algebraically thick groups includes many types of groups such as mapping class groups of surfaces $S$ with $3\times\text{genus}(S)+\# \text{punctures} \geq 5$, $Aut(F_n)$ and $Out(F_n)$ for $n\geq 3$, non-relatively hyperbolic Coxeter groups, various Artin groups, and others. Behrstock-Dru{\c{t}}u \cite{MR3421592} used algebraic thickness of a group to compute the upper bound of its divergence. Since right-angled Artin groups with connected defining graphs are examples algebraically thick groups, Corollary \ref{corag} gives us a hint to use Theorem \ref{ip2} to study distortion of normal subgroups of algebraically thick groups. 

More precisely, the algebraic thick order of group $G$ is defined inductively through a decomposition of a finite index subgroup $G_1$ into a finite collection $\mathcal{H}$ of subgroups with lower algebraic thick order that satisfies some certain conditions (see \cite{MR2501302} for the precise definition). Therefore, we can use Theorem \ref{ip2} to compute the upper bound of distortion of a normal subgroup $H$ in $G$ if the subgroup $H\cap G_1$ and the collection $\mathcal{H}$ satisfy all hypothesis in the theorem and the distortion of $H$ on each subgroup in $\mathcal{H}$ is well-understood. Also divergence functions of many thick groups were known. Therefore, we can apply Theorem~\ref{ip1} to study the distortion normal subgroups of these groups.

Macura \cite{MR3032700} introduced a class of $\CAT(0)$ groups (i.e. \emph{groups which act properly and cocompactly on some $\CAT(0)$ spaces}) to study group divergence. She showed that for each positive integer $d$ there is a $\CAT(0)$ groups with polynomial of degree $d$ divergence function. We revisit Macura's examples and construct normal subgroups for each example with distortion function as the same as the divergence function of the whole group. 

\begin{thm}
For each integer $d\geq 2$ let $G_d=\langle a_0, a_1,\cdots, a_d |a_0a_1=a_1a_0, a^{-1}_ia_0a_i=a_{i-1}, \text{for $2\leq i\leq d$}\rangle$. Then $G_d$ acts properly and cocompactly on some $CAT(0)$ space with polynomial divergence function of degree $d$ and we can find a normal finitely generated free subgroup $F_d$ inside such that the quotient $G_d/F_d$ is an infinite cyclic subgroup and the distortion of $F_d$ is also a polynomial function of degree $d$.
\end{thm}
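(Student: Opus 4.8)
The plan is to realize $G_d$ as the mapping torus of an explicit automorphism of a free group; this exhibits the normal free subgroup $F_d$ at once, and the distortion can then be read off from the (unitriangular) monodromy.

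First I would define $p\colon G_d\to\Z$ by $p(a_i)=1$ for every $i$. This respects the relations, since $a_i^{-1}a_0a_i=a_{i-1}$ maps to $-1+1+1=1=p(a_{i-1})$ and $[a_0,a_1]$ maps to $0$; hence $p$ is a well-defined surjection, and setting $F_d=\ker p$ gives $G_d/F_d\cong\Z$. Next I would make the Tietze change of generators $t=a_0$ and $s_i=a_ia_0^{-1}$ for $1\le i\le d$ (so $a_0=t$ and $a_i=s_it$). Rewriting the defining relations, $[a_0,a_1]=1$ becomes $ts_1t^{-1}=s_1$, while $a_i^{-1}a_0a_i=a_{i-1}$ becomes $ts_it^{-1}=s_i\,(ts_{i-1}t^{-1})$; feeding these in for $i=1,2,\dots$ unfolds to
\[
ts_it^{-1}=s_is_{i-1}\cdots s_1,\qquad 1\le i\le d.
\]
This is precisely the standard presentation of the mapping torus $F\rtimes_\sigma\langle t\rangle$ with $F=\langle s_1,\dots,s_d\rangle$ and $\sigma(s_i)=s_is_{i-1}\cdots s_1$. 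Since $\sigma$ is unitriangular in the $s_i$ it lies in $\operatorname{Aut}(F)$, so the fiber $F$ is free of rank $d$; as $F=\ker p=F_d$, this produces the required finitely generated normal free subgroup with infinite cyclic quotient.

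For the distortion I would prove the two bounds separately. The lower bound follows from the earlier theory: $F_d$ is finitely generated, infinite, normal and of infinite index, so Theorem \ref{ip1} shows the divergence of $G_d$ is dominated by $Dist_{G_d}^{F_d}$; since Macura \cite{MR3032700} proves this divergence is polynomial of degree $d$, we obtain $Dist_{G_d}^{F_d}\succeq n^d$. For the upper bound I would estimate the word growth of the monodromy: because $\sigma$ is unitriangular, $\sigma^{\pm k}(s_i)$ has length $O(k^{i-1})$ in the $s_j$, hence length $O(k^{d-1})$ for all $i$. Given $w\in F_d$ with $|w|_{G_d}\le n$, I would write $w$ as a word of length $O(n)$ in $t^{\pm1},s_i^{\pm1}$ and collect the $t$'s to one side: carrying each $s_i^{\pm1}$ past the running power $t^{e}$ (with $|e|\le O(n)$) replaces it by $\sigma^{e}(s_i)^{\pm1}$, a fiber word of length $O(n^{d-1})$. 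Since $w\in F_d$ the total $t$-exponent cancels, leaving a product of $O(n)$ such blocks, of length $O(n)\cdot O(n^{d-1})=O(n^d)$. Thus $Dist_{G_d}^{F_d}\preceq n^d$, and combining the bounds the distortion is polynomial of degree $d$, matching the divergence.

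I expect the main obstacle to be the structural step of identifying the mapping torus: one must choose the change of generators correctly and check that the recursion $\sigma(s_i)=s_i\sigma(s_{i-1})$ unwinds to $s_is_{i-1}\cdots s_1$, after which freeness of $F_d$ is automatic and the polynomial degree of the distortion is forced by the unitriangular shape of $\sigma$. As a self-contained alternative to Theorem \ref{ip1} for the lower bound, one can display distorted elements directly: $t^{-n}s_d^nt^n=\bigl(\sigma^{-n}(s_d)\bigr)^n$ has $G_d$-length $O(n)$, while $\sigma^{-n}(s_d)$ has length $\asymp n^{d-1}$, so its $n$-th power has $F_d$-length $\asymp n^d$; the only delicate point there is ruling out cancellation in forming this power, which holds because $\sigma^{-n}(s_d)$ is cyclically reduced.
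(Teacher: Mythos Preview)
Your proposal is correct and follows essentially the same route as the paper. The paper makes the change of variables $t=a_0^{-1}$, $x_i=a_0^{-1}a_i$, obtaining the monodromy $\Phi_d(x_i)=x_ix_{i-1}^{-1}$ with inverse $\Psi_d(x_i)=x_ix_{i-1}\cdots x_1$; your choice $t=a_0$, $s_i=a_ia_0^{-1}$ simply swaps these, giving monodromy $\sigma$ of the $\Psi_d$ form with inverse of the $\Phi_d$ form. Since both powers $\sigma^{\pm n}$ must be bounded anyway, the two computations coincide. The paper packages the upper bound by quoting Gersten's lemma (distortion $\preceq n\phi(n)$ with $\phi(n)=\max_{|j|\le n}|\Phi^j(x_i)|$) and then proving $|\Phi_d^n(x_i)|\le 2n^{i-1}$ and $|\Psi_d^n(x_i)|\le in^{i-1}$ by induction; your inline ``collect the $t$'s'' argument is exactly the proof of that lemma. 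The lower bound via Theorem~\ref{ip1} and Macura's divergence is identical to the paper's. Two small points: the claim that ``$\sigma$ unitriangular $\Rightarrow$ $\sigma\in\operatorname{Aut}(F)$'' is better justified by exhibiting the inverse $\sigma^{-1}(s_i)=s_is_{i-1}^{-1}$ (unitriangularity on the abelianization alone does not force an endomorphism of a free group to be an automorphism); and in your alternative direct lower bound you assert $|\sigma^{-n}(s_d)|\asymp n^{d-1}$ and cyclic reducedness without proof, which would need a short inductive check if you actually wanted to use that route.
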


We remark that each group $G_d$ in the above theorem was constructed by Macura \cite{MR3032700} and the property that each group $G_d$ acts properly and cocompactly on some $\CAT(0)$ space with polynomial divergence function of degree $d$ was also proved by her. We only construct the free normal subgroup $F_d$ in each $G_d$ with polynomial distortion function of degree $d$. We also remark that the existence of subgroups of $\CAT(0)$ groups with polynomial distortion was known before (see \cite{MR2252898} for example). However, the purpose of the above theorem is to emphasize on the connection between distortion of normal free subgroups and group divergence functions on $\CAT(0)$ groups. We also hope that the theorem can shed a light for the positive answer to the following conjecture raised by Gersten \cite{MR1254309}:

\begin{conj}\cite{MR1254309}
Let $F$ be a finitely generated normal free subgroup in a group $G$ such that the quotient $G/F$ is an infinite cyclic group. Then the distortion of $F$ in $G$ is equivalent to the divergence of $G$.
\end{conj}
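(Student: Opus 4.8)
The plan is to exploit that the hypotheses force $G$ to be free-by-cyclic. Since $G/F\cong\Z$ is free, the extension $1\to F\to G\to\Z\to1$ splits, so $G=F\rtimes_\phi\langle t\rangle$ for some $\phi\in\mathrm{Aut}(F)$, with $t$ acting by $txt^{-1}=\phi(x)$. One direction of the desired equivalence is already available: $F$ is a nontrivial finitely generated free group, hence infinite, and it has infinite index (the index equals $\lvert G/F\rvert=\lvert\Z\rvert$), so Theorem~\ref{ip1} gives that the divergence of $G$ is dominated by $Dist_G^F$. Thus the whole problem reduces to the reverse domination $Dist_G^F\preceq\mathrm{div}(G)$, that is, to bounding the distortion from above by the divergence.

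My strategy for this upper bound is to factor both invariants through the growth of the monodromy. Writing $\|\phi^k\|=\max_i\lvert\phi^k(x_i)\rvert_F$ for the free generators $x_i$ of $F$, I would first establish
\[
Dist_G^F(n)\ \asymp\ n\cdot\max_{\lvert k\rvert\le n}\|\phi^{k}\|.
\]
The upper bound is a normal-form computation: any $G$-word of length $n$ representing $w\in F$ can be rewritten, using $tx=\phi(x)t$ to push all $t^{\pm1}$ to one side, as a product of at most $n$ terms $\phi^{k_j}(x_{i_j}^{\pm1})$ with every $\lvert k_j\rvert\le n$, whence $\lvert w\rvert_F\le n\cdot\max_{\lvert k\rvert\le n}\|\phi^k\|$. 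The matching lower bound comes from the telescoping cascade $\prod_{j=1}^{n}\phi^{j}(x_i)=(tx_i)^{n}t^{-n}$, which has $G$-length $O(n)$ but, for a suitable generator and after ruling out cancellation in train-track coordinates, $F$-length $\asymp\sum_{j\le n}\|\phi^{j}\|\asymp n\cdot\|\phi^n\|$; this is exactly the source of the extra linear factor over the naive witnesses $\phi^{\pm n}(x_i)$. With this identity the conjecture becomes the single geometric inequality $\mathrm{div}(G)(n)\succeq n\cdot\max_{\lvert k\rvert\le n}\|\phi^k\|$.

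To obtain that divergence lower bound I would work in the Cayley graph of $G$ fibered over the $t$-coordinate, whose fibers are trees isometric to the Cayley graph of $F$ and whose adjacent fibers are glued by $\phi$. For radius $r$ I would select two points of the sphere lying in the fiber over $0$ that differ by a monodromy-inflated element such as the cascade above, and argue that any path joining them while avoiding the central ball must either remain near fiber $0$, paying the full $F$-distance, or traverse on the order of $r$ fibers in which the displacement has been expanded by $\phi^{\pm r}$; either alternative forces length at least $r\cdot\max_{\lvert k\rvert\le r}\|\phi^k\|$. Combined with Theorem~\ref{ip1} and the distortion identity, this would close the equivalence.

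The step I expect to be the genuine obstacle is precisely this divergence lower bound for an arbitrary $\phi\in\mathrm{Aut}(F)$, and it is why the statement remains a conjecture rather than a theorem. A general free-group automorphism has mixed dynamics---by the relative train track theory of Bestvina--Feighn--Handel it decomposes into strata of polynomial and of exponential growth---and the divergence of $F\rtimes_\phi\Z$ is governed by the interaction of these strata. I would therefore first settle the two clean regimes: when $\phi$ is atoroidal the group $G$ is word-hyperbolic, so its divergence is exponential while $\|\phi^n\|$ also grows exponentially, matching; and when $\phi$ is polynomially growing I would adapt the divergence estimates Macura used for the examples $G_d$ to the general polynomially growing monodromy, matching the polynomial degree $\deg\mathrm{div}(G)$ to the degree of $n\cdot\|\phi^n\|$. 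Reconciling the mixed case---constructing avoidance paths that are efficient along the polynomial strata yet still detect the exponential ones, so that neither invariant overtakes the other---is where the main work, and the only serious gap, remains.
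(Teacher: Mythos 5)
You cannot be judged against the paper's own proof here, because the paper offers none: this statement is precisely Gersten's conjecture, which the paper records as open, and nothing in the paper claims to resolve it. What your proposal actually proves is only the direction the paper already has, namely $Div(G)\preceq Dist_G^F$ via Theorem~\ref{ip1} (indeed for the cyclic-quotient case this is Gersten's original Theorem 4.1, as the paper notes). Everything beyond that in your write-up is a program, not a proof, and you concede as much in your final paragraph. The paper's Theorem~\ref{th3} is instructive on this point: it verifies the conjecture only for Macura's specific monodromies $\Phi_d$, and even there the upper bound on distortion is an explicit inductive computation (Proposition~\ref{p2}) rather than a general principle, while the divergence lower bound is imported wholesale from Macura's paper --- exactly the two steps your sketch leaves open in general.

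Concretely, there are two genuine gaps. First, your claimed identity $Dist_G^F(n)\asymp n\cdot\max_{\lvert k\rvert\le n}\lVert\phi^k\rVert$ has only its upper half available (it is the Gersten lemma quoted in the paper); the lower half rests on the telescoping witness $(tx_i)^n t^{-n}=\phi(x_i)\phi^2(x_i)\cdots\phi^n(x_i)$ together with an unargued assertion that cancellation can be ``ruled out in train-track coordinates.'' For an arbitrary $\phi\in\mathrm{Aut}(F)$ this is not routine: cancellation between consecutive $\phi^j(x_i)$ can be massive, your witness only sees positive powers of $\phi$ (so it says nothing when $\phi^{-1}$ grows faster), and controlling this uniformly across the polynomial and exponential strata of a relative train track is a substantial piece of work, not a remark. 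Second, and more seriously, the inequality $Div(G)\succeq n\cdot\max_{\lvert k\rvert\le n}\lVert\phi^k\rVert$ that your fibered-Cayley-graph heuristic is meant to deliver is the entire content of the conjecture: the dichotomy ``stay near fiber $0$ or cross $\asymp r$ fibers'' ignores paths that drift through moderately many fibers where only some strata have been inflated, which is exactly the mixed-dynamics regime you flag. Since the atoroidal case (Brinkmann hyperbolicity, both invariants exponential) and the purely polynomial case are the easy consistency checks, what remains of your plan after removing them is the open problem itself, unchanged.
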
 

We introduce a collection of normal subgroups of right-angled Artin groups, called generalized Bestvina-Brady subgroups (see Section~\ref{GBBS} for the precise definition). By using the work of Meier-VanWyk \cite{MR1337468}, we give a necessary and sufficient condition for these groups to be finitely generated.

\begin{thm}
Let $\Gamma$ be a finite simplicial graph. Let $H_\Phi$ be a generalized Bestvina-Brady subgroup of $A_{\Gamma}$. Then $H_\Phi$ is finitely generated iff each basis subgraph of $\Gamma$ with respect to $\Phi$ is connected and dominating. 
\end{thm}

 We compute the distortion of groups of this collection with some additional hypothesis by mainly using Theorem \ref{ip1} and Theorem \ref{ip2}.

\begin{thm}
Let $\Gamma$ be a simplicial connected graph. Let $H_\Phi$ be a strong generalised Bestvina-Brady subgroup of $A_\Gamma$ with basis subgraphs $\{\Gamma_i\}_{1\leq i \leq d}$. Then, the distortion of $H_\Phi$ in $A_\Gamma$ is quadratic if there is a non-empty subset $I$ of $\{1,2, \cdots, d\}$ such that the subgraph $\Gamma'$ generated by $\{\Gamma_i\}_{i\in I}$ is not a join. Moreover, the distortion of $H_\Phi$ in $A_\Gamma$ is linear if one of the following conditions holds:
\begin{enumerate}
\item The generalised Bestvina-Brady subgroup $H_\Phi$ is a special generalised Bestvina-Brady subgroup.
\item Each basis subgraphs in $\{\Gamma_i\}_{1\leq i \leq d}$ is a join and each pair of basis subgraphs in $\{\Gamma_i\}_{1\leq i \leq d}$ commutes. 
\end{enumerate}
\end{thm}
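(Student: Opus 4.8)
The plan is to squeeze the distortion of $H_\Phi$ between matching bounds, obtaining the upper bound from Theorem~\ref{ip2} and the lower bound from Theorem~\ref{ip1} together with the Behrstock--Charney computation of divergence of right-angled Artin groups \cite{MR2874959}. I would first establish, for every strong generalised Bestvina--Brady subgroup, a quadratic upper bound. To do this I would feed Theorem~\ref{ip2} a covering collection $\mathcal{A}$ of join subgroups adapted to the combinatorics of the basis subgraphs (the natural candidates being star subgroups of vertices or of the $\Gamma_i$), each of which is a join and hence has linearly distorted kernel by an argument as in Corollary~\ref{corag}. Hypothesis~(1) of Theorem~\ref{ip2} follows because the basis subgraphs are dominating (the finite generation criterion proved earlier), so their stars cover $\Gamma$ and the corresponding subgroups generate $A_\Gamma$, while each intersection $A\cap H_\Phi$ is finitely generated by the same criterion. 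Granting Hypothesis~(2), the formula $f(n)=n\max_{A\in\mathcal{A}}Dist^{A\cap H_\Phi}_A(n)=n\cdot n=n^2$ then gives $Dist^{H_\Phi}_{A_\Gamma}(n)\preceq n^2$.

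For the quadratic lower bound, fix the subset $I$ for which $\Gamma'$ is not a join. Subgroups of a right-angled Artin group generated by induced subgraphs are isometrically embedded, and the graph retraction $A_\Gamma\to A_{\Gamma'}$ carries $H_\Phi$ into $K:=H_\Phi\cap A_{\Gamma'}$ because the basis subgraphs indexed by $I$ lie entirely in $\Gamma'$; combining these two facts shows $Dist^{K}_{A_{\Gamma'}}\preceq Dist^{H_\Phi}_{A_\Gamma}$. Now $K$ is a finitely generated (again by the finite generation criterion), infinite, infinite-index normal subgroup of $A_{\Gamma'}$ (the index is infinite because $p$ has infinite image on $A_{\Gamma'}$), so Theorem~\ref{ip1} shows the divergence of $A_{\Gamma'}$ is dominated by $Dist^{K}_{A_{\Gamma'}}$. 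Since $\Gamma'$ is not a join, the Behrstock--Charney classification gives that the divergence of $A_{\Gamma'}$ is at least quadratic, and chaining the dominations yields $Dist^{H_\Phi}_{A_\Gamma}(n)\succeq n^2$, matching the upper bound.

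Under either special hypothesis I would improve the upper bound to linear, the lower bound $Dist^{H_\Phi}_{A_\Gamma}(n)\succeq n$ being automatic. Under Hypothesis~(2) the basis subgraphs are pairwise-commuting joins, which forces $\Gamma$ itself to be a join and $A_\Gamma$ to split as a direct product compatible with $p$; in such a product $H_\Phi$ is visibly undistorted, so the distortion is linear directly, bypassing the factor-of-$n$ loss inherent in Theorem~\ref{ip2}. Under Hypothesis~(1), I would exploit the extra rigidity packaged into a special generalised Bestvina--Brady subgroup to build an explicit retraction or normal form that rewrites any word representing an element of $H_\Phi$ using $O(n)$ generators of $H_\Phi$, again giving a linear bound.

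The main obstacle is Hypothesis~(2) of Theorem~\ref{ip2}: I must choose the collection $\mathcal{A}$ so that consecutive members meet in a subgroup whose $p$-image is of finite index in $A_\Gamma/H_\Phi$, and this is exactly where the interaction between the partition $\Phi$, the connectivity of $\Gamma$, and the dominating property of the basis subgraphs must be controlled. A closely related difficulty is attaining the \emph{linear} (as opposed to quadratic) upper bound in the two special cases, since Theorem~\ref{ip2} by itself cannot distinguish them; I expect the bulk of the technical work, and the genuinely new input beyond Theorems~\ref{ip1} and~\ref{ip2}, to lie in these two points.
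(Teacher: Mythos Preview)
Your lower-bound argument and the two linear cases are essentially what the paper does. Restricting to $A_{\Gamma'}$, using the retraction $A_\Gamma\to A_{\Gamma'}$ to see that $K=H_\Phi\cap A_{\Gamma'}$ is undistorted in $H_\Phi$, and then invoking Theorem~\ref{ip1} together with Behrstock--Charney is exactly the paper's route (packaged there as Proposition~\ref{prop1}); the direct-product decomposition under Hypothesis~(2) is also identical, and your ``explicit normal form'' for the special case is precisely what the paper carries out in Proposition~\ref{pp}.

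The real divergence---and a genuine gap---is in the quadratic \emph{upper} bound. The paper does \emph{not} obtain it from Theorem~\ref{ip2}; it proves a direct word-rewriting estimate (Proposition~\ref{pp}, resting on Lemmas~\ref{lem1}--\ref{lem2}) valid for every strong generalised Bestvina--Brady subgroup, which simultaneously gives the linear bound in the special case. Your plan to reach the quadratic bound via Theorem~\ref{ip2} stumbles not at Hypothesis~(2)---which is in fact easy, since the special vertices $s_1,\dots,s_d$ span a clique mapping onto $\Z^d$ and lie in every $\mathrm{st}(s_i)$---but at the step you pass over quickly: the claim that each join subgroup $A$ has $A\cap H_\Phi$ \emph{linearly} distorted ``by an argument as in Corollary~\ref{corag}''. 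That corollary concerns $\Z$-quotients in which every vertex has nonzero image; here the quotient is $\Z^d$, and for a star subgroup $A_{\mathrm{st}(v)}$ the intersection $A_{\mathrm{st}(v)}\cap H_\Phi$ is itself a generalised Bestvina--Brady subgroup whose basis subgraphs $\Gamma_j\cap\mathrm{st}(v)$ (for $j$ different from the label of $v$) are in general only strongly, not specially, dominating. So establishing linear distortion on these pieces is the very problem you are trying to solve, and the argument becomes circular: absent an independent input of the type Proposition~\ref{pp} supplies, Theorem~\ref{ip2} will only feed back $n\cdot n^2=n^3$. The ``genuinely new input'' you anticipate is needed already for the quadratic bound, not just for sharpening it to linear.
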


The above theorem provides a rich source of examples to study the connection between group divergence and subgroup distortion. We also hope that the collection of generalized Bestvina-Brady subgroups also provides a good source of examples to study normal subgroups of right-angled Artin groups.

\subsection*{Acknowledgments}
I would like to thank Prof.~Christopher Hruska, Ignat Soroko, and Hoang Thanh Nguyen for their helpful comments and suggestions. I also want to thank Prof.~Jason Behrstock for his very useful conversation. Especially, he gives me an idea that leads to Theorem \ref{ip2}. I also would like thank Prof.~Ruth Charney and Prof.~Thomas Koberda for their fruitful conversations on subgroups of right-angled Artin groups. I also thank Kevin Schreve for his help that improved the exposition of the paper. Lastly, I thank the referee for very helpful advice.

\section{Distortion of normal subgroups}

In this section, we review the concepts of group divergence and subgroup distortion. We investigate the connection between these concepts for a pair of finitely generated groups $(G,H)$, where $H$ is a normal subgroup of $G$. More precisely, we prove that the divergence of $G$ is a lower bound for the distortion of $H$ in $G$. We compute an upper bound for the distortion of $H$ in $G$ by dividing the group $G$ into a collection of smaller subgroups and computing the distortion of $H$ on these subgroups.

Before we define the concepts of group divergence and subgroup distortion, we need to define the notions of domination and equivalence. These notions are the key tool to measure group divergence and subgroup distortion.
\begin{defn}
Let $\mathcal{M}$ be the collection of all functions from $[0,\infty)$ to $[0,\infty]$. Let $f$ and $g$ be arbitrary elements of $\mathcal{M}$. \emph{The function $f$ is dominated by the function $g$}, denoted \emph{$f\preceq g$}, if there are positive constants $A$, $B$, $C$ and $D$ such that $f(x)\leq Ag(Bx)+Cx$ for all $x>D$. Two function $f$ and $g$ are \emph{equivalent}, denoted \emph{$f\sim g$}, if $f\preceq g$ and $g\preceq f$. 

\end{defn}

\begin{rem}
A function $f$ in $\mathcal{M}$ is \emph{linear, quadratic or exponential...} if $f$ is respectively equivalent to any polynomial with degree one, two or any function of the form $a^{bx+c}$, where $a>1, b>0$.
\end{rem}

\begin{defn}
Let $\{\delta_{\rho}\}$ and $\{\delta'_{\rho}\}$ be two families of functions of $\mathcal{M}$, indexed over $\rho \in (0,1]$. \emph{The family $\{\delta_{\rho}\}$ is dominated by the family $\{\delta'_{\rho}\}$}, denoted \emph{$\{\delta_{\rho}\}\preceq \{\delta'_{\rho}\}$}, if there exists constant $L\in (0,1]$ such that $\delta_{L\rho}\preceq \delta'_{\rho}$. Two families $\{\delta_{\rho}\}$ and $\{\delta'_{\rho}\}$ are \emph{equivalent}, denoted \emph{$\{\delta_{\rho}\}\sim \{\delta'_{\rho}\}$}, if $\{\delta_{\rho}\}\preceq \{\delta'_{\rho}\}$ and $\{\delta'_{\rho}\}\preceq \{\delta_{\rho}\}$.
\end{defn}

\begin{rem}
A family $\{\delta_{\rho}\}$ is dominated by (or dominates) a function $f$ in $\mathcal{M}$ if $\{\delta_{\rho}\}$ is dominated by (or dominates) the family $\{\delta'_{\rho}\}$ where $\delta'_{\rho}=f$ for all $\rho$. The equivalence between a family $\{\delta_{\rho}\}$ and a function $f$ in $\mathcal{M}$ can be defined similarly. Thus, a family $\{\delta_{\rho}\}$ is linear, quadratic, exponential, etc if $\{\delta_{\rho}\}$ is equivalent to the function $f$ where $f$ is linear, quadratic, exponential, etc.
\end{rem}

\begin{defn}
Let $G$ be a group with a finite generating set $S$ and $H$ a subgroup of $G$ with a finite generating set $T$. 
The \emph{subgroup distortion} of $H$ in $G$ is the function $Dist^H_G\!:(0,\infty)\to(0,\infty)$ defined as follows:
\[Dist^H_G(r)=\max \bigset{\abs{h}_T}{h\in H, \abs{h}_S\leq r}.\] 
\end{defn}

\begin{rem}
It is well-known that distortion does not depend on the choice of finite generating sets.
\end{rem}

We now recall Gersten's definition of divergence from \cite{MR1254309}. Let $X$ be a geodesic space and $x_0$ one point in $X$. Let $B_r(x_0)$ be the open ball centered at $x_0$ and radius $r$ and $S_r(x_0)$ the sphere with the same center and radius. Let $d_{r,x_0}$ be the induced length metric on the complement of the open ball with radius $r$ about $x_0$. If the point $x_0$ is clear from context, we will use the notation $d_r$ instead of using $d_{r,x_0}$.

\begin{defn}
Let $X$ be a geodesic space and $x_0$ one point in $X$. We define the \emph{divergence} of $X$, denoted $Div(X)$, as a family $\{\delta_\rho\}_{\rho \in (0,1]}$ of functions, where each function $\delta_\rho:[0, \infty)\to [0, \infty)$ is given as follows: 

For each $r$, let $\delta_\rho (r)=\sup d_{\rho r}(x_1,x_2)$ where the supremum is taken over all $x_1, x_2 \in S_r(x_0)$ such that $d_{\rho r}(x_1, x_2)<\infty$.
\end{defn}

\begin{rem}
The divergence of a geodesic space $X$ does not depend on the choice of $x_0$. Moreover, the concept of divergence is a quasi-isometry invariant. The \emph{divergence of a finitely generated group} $G$, denoted $Div(G)$, is the divergence of a Cayley graph $\Gamma(G,S)$ for some finite generating set $S$.
\end{rem}

Before we examine the connection between the group divergence and the distortion of a normal subgroup, we need to examine the relationship between the distance between cosets and the distance on the quotient group as follows:

\begin{prop}
\label{p1}
Let $G$ be a finitely generated group with a finite generating set $S=\{s_i\}_{i\in I}$ and $H$ a normal subgroup of $G$. For each element $g$ in $G$ let $\bar{g}$ be the image of $g$ under the canonical projection $p\!: G \to G/H$. Then the set $\bar{S}= \{\bar{s}_i\}_{i\in I}$ generates the quotient group $G/H$ and $d_S(aH,bH)=d_{\bar{S}}(\bar{a},\bar{b})$ for every $a$ and $b$ in $G$.
\end{prop}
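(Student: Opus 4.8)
The plan is to treat the two assertions separately, since the generation claim is immediate while the distance formula carries the content. For the generation claim, note that $p$ is a group homomorphism and $S$ generates $G$, so any $\bar g \in G/H$ lifts to some $g \in G$, which we write as a product $g = s_{i_1}^{\epsilon_1}\cdots s_{i_k}^{\epsilon_k}$ of generators and their inverses; applying $p$ yields $\bar g = \bar s_{i_1}^{\epsilon_1}\cdots \bar s_{i_k}^{\epsilon_k}$, so $\bar S$ generates $G/H$. For the distance formula I would first make precise that $d_S(aH,bH)$ denotes the distance in the Cayley graph $\Gamma(G,S)$ between the cosets $aH$ and $bH$ as subsets, that is $d_S(aH,bH) = \min \set{d_S(ah_1,bh_2)}{h_1,h_2\in H}$.

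First I would rewrite the right-hand side using left-invariance of the word metric $d_S$ under the left $G$-action on its Cayley graph. For any $h_1,h_2 \in H$ we have $d_S(ah_1,bh_2) = \abs{h_1^{-1}a^{-1}bh_2}_S$, and as $h_1,h_2$ range over $H$ the element $h_1^{-1}a^{-1}bh_2$ ranges over the double coset $H(a^{-1}b)H$. Here the normality of $H$ enters essentially: since $Hg = gH$ for every $g$, the double coset collapses to the single coset $(a^{-1}b)H = p^{-1}(\bar a^{-1}\bar b)$. Consequently
\[
d_S(aH,bH) = \min \set{\abs{g}_S}{g\in G,\ p(g) = \bar a^{-1}\bar b}.
\]

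It then remains to prove the key identity that for every $\bar w \in G/H$,
\[
\min \set{\abs{g}_S}{p(g)=\bar w} = \abs{\bar w}_{\bar S},
\]
after which the formula $d_S(aH,bH) = \abs{\bar a^{-1}\bar b}_{\bar S} = d_{\bar S}(\bar a,\bar b)$ follows at once. For the inequality $\abs{\bar w}_{\bar S} \le \abs{g}_S$ I would take any $g$ with $p(g)=\bar w$, express it as a product of $\abs{g}_S$ letters from $S$, and project down through the homomorphism $p$ to obtain an expression for $\bar w$ in the same number of letters from $\bar S$; since $p$ does not lengthen words, this gives the bound for every such $g$, hence for the minimum. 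For the reverse inequality I would take a geodesic word $\bar s_{j_1}^{\delta_1}\cdots \bar s_{j_m}^{\delta_m}$ representing $\bar w$ with $m = \abs{\bar w}_{\bar S}$ and lift it letter by letter to $g_0 = s_{j_1}^{\delta_1}\cdots s_{j_m}^{\delta_m}$, which satisfies $p(g_0)=\bar w$ and $\abs{g_0}_S \le m$, so the minimum is at most $\abs{\bar w}_{\bar S}$.

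There is no deep obstacle here; the only point demanding care is the reduction in the second paragraph, where one must invoke normality to see that minimizing $d_S(ah_1,bh_2)$ over the two independent representatives $h_1,h_2$ is the same as minimizing $S$-word length over the single fiber $p^{-1}(\bar a^{-1}\bar b)$. Once the problem is rephrased as comparing the $S$-word length of lifts with the $\bar S$-word length of images, both inequalities are routine consequences of the fact that $p$ is a length-nonincreasing homomorphism that admits length-preserving lifts of generators.
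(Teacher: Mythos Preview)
Your proposal is correct and follows essentially the same approach as the paper: both directions of the distance equality are obtained by projecting a geodesic word through $p$ (giving $d_{\bar S}(\bar a,\bar b)\le d_S(aH,bH)$) and by lifting a geodesic word letter by letter (giving the reverse inequality). The only cosmetic difference is that you first reduce, via normality and left-invariance, to the identity $\min\bigset{\abs{g}_S}{p(g)=\bar w}=\abs{\bar w}_{\bar S}$ and then prove that, whereas the paper argues the two inequalities directly on coset representatives; the underlying content is the same.
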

\begin{proof}
It is obvious that $\bar{S}$ is a generating set of $G/H$ and we now prove that $d_S(aH,bH)=d_{\bar{S}}(\bar{a},\bar{b})$ every $a$ and $b$ in $G$. Let $ah_1$ and $bh_2$ be elements in $aH$ and $bH$ respectively such that the distance between $ah_1$ and $bh_2$ is the same as the distance between $aH$ and $bH$. Let $w=s_1s_2\cdots s_n$ be a word in $S$ such that $n=d_S(aH,bH)$ and $ah_1=(bh_2)(s_1s_2\cdots s_n)$. This implies that $\bar{a}=\bar{b}(\bar{s}_1\bar{s}_2\cdots \bar{s}_n)$. Therefore, $d_{\bar{S}}(\bar{a},\bar{b}) \leq n=d_S(aH,bH)$. Conversely, let $m=d_{\bar{S}}(\bar{a},\bar{b})$ and $w'=s'_1s'_2\cdots s'_m$ be a word in $S$ such that $\bar{a}=\bar{b}(\bar{s'}_1\bar{s'}_2\cdots \bar{s'}_m)$. Therefore, $c=b(s_1s_2\cdots s_m)$ is an element in $aH$ and $d_S(b,c)\leq m$. Therefore, $d_S(aH,bH)\leq m=d_{\bar{S}}(\bar{a},\bar{b})$. Thus, $d_S(aH,bH)=d_{\bar{S}}(\bar{a},\bar{b})$ for every $a$ and $b$ in $G$.
\end{proof}

\begin{thm}
\label{th1}
Let $G$ be a finitely generated group and $H$ a finitely generated infinite index infinite normal subgroup of $G$. Then the divergence $Div(G)$ of $G$ is dominated by the subgroup distortion $Dist_G^H$ of $H$ in $G$.
\end{thm}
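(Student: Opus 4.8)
The plan is to prove the equivalent reformulation $Div(G)\preceq Dist_G^H$ by constructing, for any two points on a sphere, an explicit detour around the excluded ball whose length is controlled by the distortion function. Fix a finite generating set $S$ of $G$ and $T$ of $H$, and set $C=\max_{t\in T}\abs{t}_S$. Since each $\delta_\rho$ is nondecreasing in $\rho$, the definition of domination of a family by a function reduces the theorem to the following statement for a single, suitably small constant $\rho$ (say $\rho=1/4$): taking the basepoint $x_0=e$, any two points $g_1,g_2$ on the sphere $S_r(e)$ can be joined by a path that avoids the open ball $B_{\rho r}(e)$ and has length at most $A\,Dist_G^H(Br)+Dr$ for constants $A,B,D$. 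The additive linear term $Dr$ is harmless, as it is exactly the term permitted in the definition of $\preceq$.

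Two elementary observations drive the construction. First (a budget principle), if $\abs{x}_S\geq \mu$ then left-multiplying $x$ by any word of $S$-length less than $\mu-\rho r$ produces a path all of whose vertices lie outside $B_{\rho r}(e)$. Second (far cosets), Proposition \ref{p1} gives $\abs{g}_S\geq d_S(gH,eH)=d_{\bar S}(\bar g,\bar e)$, so any coset whose image in $G/H$ lies at $\bar S$-distance greater than $\rho r$ from $\bar e$ is contained entirely in the complement of $B_{\rho r}(e)$; in particular an arbitrary path staying inside one such coset automatically avoids the ball.

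With these in hand I would build the detour in stages, exploiting the two hypotheses on $H$. Because $H$ has infinite index, $G/H$ is infinite, so there are cosets arbitrarily far from $\bar e$ in the quotient; fix one ``hub'' coset $\bar v$ with $d_{\bar S}(\bar v,\bar e)$ slightly larger than $\rho r$. First, using the budget principle, move each $g_i$ by a short lift of a $\bar S$-geodesic into some coset lying at quotient-distance $>\rho r$ from $\bar e$. Next, because $H$ is infinite and balls in $G$ are finite, that coset contains points of arbitrarily large $S$-length; travelling inside it (legal by the far-coset observation) I reach a point $z_i$ with $\abs{z_i}_S\geq 100r$. The cost of this stage is bounded by distortion: reading off an $H$-geodesic toward a far element and using that consecutive elements change $S$-length by at most $C$, one finds an intermediate $H$-element $a_i$ with $\abs{a_i}_S$ of size $\Theta(r)$, whence $\abs{a_i}_T\leq Dist_G^H(O(r))$ by the very definition of distortion. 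From the deep point $z_i$, I then slide into the common hub coset $\bar v$ along a lift of a $\bar S$-geodesic of length $O(r)$; since $\abs{z_i}_S\geq 100r$, the budget principle keeps this segment outside the ball. Finally both endpoints $w_1,w_2$ lie in the single coset $\bar v$, so $w_1^{-1}w_2\in H$ with $\abs{w_1^{-1}w_2}_S=O(r)$, and the far-coset observation lets me connect them by an $H$-path inside $\bar v$ of length at most $C\cdot Dist_G^H(O(r))$.

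The main obstacle is that the complement of $B_{\rho r}(\bar e)$ in the quotient need not be connected -- for instance when $G/H$ has infinitely many ends -- so one cannot in general route between two far cosets while staying far from $\bar e$ purely in the quotient. The device that circumvents this is the second stage: by first pushing deep into the infinite fibre $H$ (raising $\abs{z_i}_S$ to $100r$) I buy a large budget, which then permits arbitrary coset changes while remaining outside the ball, regardless of how the quotient behaves near $\bar e$. Concatenating the segments $g_1\to z_1\to w_1\to w_2\to z_2\to g_2$ yields a path avoiding $B_{\rho r}(e)$ of total length $O(r)+O\bigl(Dist_G^H(O(r))\bigr)$, which gives $\delta_\rho\preceq Dist_G^H$ and hence $Div(G)\preceq Dist_G^H$, as desired.
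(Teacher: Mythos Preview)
Your proposal is correct and follows essentially the same strategy as the paper: identify cosets far from $\bar e$ in $G/H$ as ``highways'' on which any path avoids the ball (the far-coset observation), use the infiniteness of $H$ together with the distortion function to travel long distances inside such a coset, and use the budget principle to switch cosets once one is deep enough. The paper connects every $x\in S_r(e)$ to a fixed hub element $g$ with $\abs{g}_S=d_S(gH,H)=r$; you instead route both points to a common hub coset, which is the same idea.

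The only real difference is in the opening move. The paper first escapes the ball by choosing a bi-infinite geodesic through $x$ and walking $4r$ along the ray that avoids $B_{r/2}(e)$, and only then shifts into a far coset; you instead immediately lift a short $\bar S$-geodesic to reach a far coset, relying on the initial budget $\abs{g_i}_S-\rho r=3r/4$. Your variant avoids invoking the existence of a bi-infinite geodesic through an arbitrary vertex, at the cost of a slightly more delicate check that the lifted quotient path is short enough (if $\abs{\bar g_i}_{\bar S}\le r/4$, walking $r/2+1$ along a quotient ray reaches a coset at distance $>r/4$, which fits inside the budget). Both executions are valid and yield the same bound $\delta_\rho(r)\le A\,Dist_G^H(Br)+Dr$ for $\rho\le 1/4$ (the paper uses $\rho\le 1/2$).

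One small point worth making explicit for the reader: your intermediate-value argument for producing $a_i\in H$ with $\abs{a_i}_S$ in a prescribed window $[Mr, Mr+C]$ is exactly what is needed, and the paper glosses over the analogous step when it asserts the existence of $h\in H$ with $\abs{h}_S=16r$.
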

\begin{proof}
Let $S=\{s_i\}_{i\in I}$ be a finite generating set of $G$. We assume that $S$ also contains a finite generating set $T$ of $H$. Therefore, we can consider the Cayley graph $\Gamma(H,T)$ as a subgraph of $\Gamma(G,S)$. For each element $g$ in $G$ let $\bar{g}$ be the image of $g$ under the canonical projection $p\!: G \to G/H$. By Proposition \ref{p1}, the set $\bar{S}= \{\bar{s}_i\}_{i\in I}$ generates the quotient group $G/H$ and $d_S(aH,bH)=d_{\bar{S}}(\bar{a},\bar{b})$ for every $a$ and $b$ in $G$.

Let $Div(G)=\{\delta_\rho\}_{\rho \in (0,1]}$ and we will prove that $\delta_\rho \preceq Dist^G_H$ for each $\rho$ in $(0,1/2]$. Let $r$ be an arbitrary positive number. We can assume that $r$ is an integer. Let $g$ be an element in $G$ such that $d_S(H,gH)=\abs{g}_S=r$. For each $x$ in $S_r(e)$ we are going to construct a path $\ell$ outside the open ball $B_{r/2}(e)$ connecting $x$ and $g$ with the length at most $2Dist^G_H(32r)+14r$.

Let $\alpha$ be an arbitrary bi-infinite geodesic containing $x$. Let $\alpha_1$ and $\alpha_2$ be two rays obtained from $\alpha$ with the initial point $x$. Since the distance between $x$ and $e$ is exactly $r$, either of $\alpha_1$ or $\alpha_2$ must lie outside the open ball $B_{r/2}(e)$. We assume that $\alpha_1$ lies outside the open ball $B_{r/2}(e)$. Let $x_1$ be the point on $\alpha_1$ such that the distance between $x$ and $x_1$ is exactly $4r$ and let $\ell_1$ the subsegment of $\alpha_1$ connecting $x$ and $x_1$. Therefore, $\ell_1$ must lie outside the open ball $B_{r/2}(e)$. 

We now find a point $x_2$ in $G$ such that $d_S(x_2H,H)\geq r$ and a path $\ell_2$ outside the open ball $B_{r/2}(e)$ connecting $x_1$ and $x_2$ with length at most $2r$. If $d_S(x_1H,H)\geq r$, then we let $x_2=x_1$ and $\ell_2$ the degenerate segment of length 0 at $x_2$. We now assume that $d_S(x_1H,H)<r$. Let $u$ be an element in $G$ such that $d_{\bar{S}} (\bar{x}_1,\bar{u})=2r$. Since the distance between $\bar{e}$ and $\bar{x}_1$ is at most $r$ with respect to the metric $d_{\bar{S}}$, the distance between $\bar{e}$ and $\bar{u}$ is at least $r$. Let $w=s_1s_2\cdots s_{2r}$ be a word in $S$ such that $\bar{u}=\bar{x_1w}$. Let $x_2=x_1w$ and $\ell_2$ the path connecting $x_1$ and $x_2$ with label by $w$. Then the length of $\ell_2$ is at most $2r$. Also, $d_S(e,x_1)\geq d_S(x,x_1)-d_S(e,x)\geq 4r-r\geq 3r$. Therefore, $\ell_2$ must lie outside the open ball $B_{r/2}(e)$. Obviously, $d_S(x_2H,H)=d_{\bar{S}} (\bar{x}_2, \bar {e})\geq r$. We also see that the distance between $x_2$ and $e$ is at most $7r$ and the distance between $x_2$ and $g$ is at most $8r$.

Let $h$ be an element in $H$ of length $16r$ with respect to the metric $d_S$ and let $\eta$ is a path in $\Gamma(H,T)$ connecting $e$ and $h$ with length at most $Dist^G_H(16r)$. Let $x_3=x_2h$ and $\ell_3 = x_2 \eta$. Then $\ell_3$ is a path connecting $x_2$ and $x_3$ with length at most $Dist^G_H(16r)$. Since all vertices of $\ell_3$ lie in $x_2H$, $\ell_3$ must lie outside the open ball $B_{r/2}(e)$. We observe that the distance between $x_2$ and $x_3$ is exactly $16r$ and the distance between $x_3$ and $e$ lies between $9r$ and $23r$.

Since $d_S(x_2H,gH)\leq d_S(x_2,g)\leq 8r$ and $x_3$ lies in $x_2H$, there is a point $x_4$ in $gH$ such that the distance between $x_3$ and $x_4$ is at most $8r$. Let $\ell_4$ be a geodesic connecting $x_3$ and $x_4$. Then the length of $\ell_4$ is at most $8r$. Since the distance between $x_3$ and $e$ is at least $9r$, the path $\ell_4$ must lie outside the open ball $B_{r/2}(e)$. We also see that the distance between $x_4$ and $e$ is at most $31r$ and the distance between $x_4$ and $g$ is at most $32r$. Therefore, there is a path $\ell_5$ with all vertices in $gH$ connecting $x_4$ and $g$ such that the length of $\ell_5$ is bounded above by $Dist^G_H(32r)$. Since the distance between $H$ and $gH$ is exactly $r$, $\ell_5$ must lie outside the open ball $B_{r/2}(e)$. 

Finally, let $\ell=\ell_1\cup \ell_2 \cup \ell_3 \cup \ell_4 \cup \ell_5$ then $\ell$ is a path outside $B_{r/2}(e)$ connecting $x$ and $g$ of length at most $Dist^G_H(32r)+Dist^G_H(16r)+14r$. Also, the distortion function $Dist^G_H$ is increasing. Then the length of $\ell$ is bounded above by $2Dist^G_H(32r)+14r$. Thus, any two points in the sphere $S_r(e)$ can be connected by a path outside $B_{r/2}(e)$ with length at most $4Dist^G_H(32r)+28r$. This implies that $\delta_\rho(r)$ is bounded above by $4Dist^G_H(32r)+28r$ for each $\rho$ in $(0,1/2]$. Therefore, the divergence $Div(G)$ of $G$ is dominated by the subgroup distortion $Dist_G^H$ of $H$ in $G$.

\end{proof}

The above theorem provides us with a lower bound for the distortion of a normal subgroup. The following theorem helps us compute an upper bound.

\begin{thm}
\label{theo1}
Let $G$ be a finitely generated group and $H$ normal subgroup of $G$ with the canonical projection $p:G \rightarrow G{/}H$. Suppose there is a finite collection $\mathcal{A}$ of finitely generated subgroups of $G$ that satisfies the following conditions:
\begin{enumerate}
\item The union of all subgroups in $\mathcal{A}$ generates $G$ and the subgroup $A\cap H$ is finitely generated for each subgroup $A$ in $\mathcal{A}$.
\item For each pair $A$ and $A'$ in $\mathcal{A}$, there is a sequence of subgroups in $\mathcal{A}$ $A=A_1, A_2, \cdots, A_n=A'$ such that $p(A_i\cap A_{i+1})$ is a finite index subgroup of $G/H$ for each $i$ in $\{1,2, \cdots, n-1\}$.
\end{enumerate}
Then, the subgroup $H$ is finitely generated and the distortion $Dist_{G}^H$ is dominated by the function $f(n)=n \max \set{Dist_{A}^{A\cap H}(n)}{A \in \mathcal{A}}$.
\end{thm}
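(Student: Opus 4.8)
The plan is to establish finite generation of $H$ and the distortion estimate simultaneously, by producing for each $h\in H$ with $\abs{h}_S\le n$ a factorization $h=\eta_1\eta_2\cdots\eta_M$ in which every factor lies in $A_i\cap H$ for some $A_i\in\mathcal{A}$, the number of factors is $M=O(n)$, and each factor is short in its own subgroup, $\abs{\eta_i}_{S_{A_i}}=O(n)$. Here I fix a finite generating set $S_A$ of each $A$ and $T_A$ of each $A\cap H$, and take $S=\bigcup_A S_A$, $T=\bigcup_A T_A$. Granting such a factorization, finite generation is immediate, since it exhibits $H=\gen{\bigcup_A (A\cap H)}$ with each $A\cap H$ finitely generated. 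The distortion bound then follows by estimating $\abs{\eta_i}_{T_{A_i}}\le Dist_{A_i}^{A_i\cap H}\bigl(O(n)\bigr)\le \max_A Dist_A^{A\cap H}\bigl(O(n)\bigr)$ and summing over the $O(n)$ factors; since the $Dist$ functions are increasing, the resulting bound $O(n)\cdot\max_A Dist_A^{A\cap H}(O(n))$ is dominated by $f(n)=n\max_A Dist_A^{A\cap H}(n)$.

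To build the factorization I would first write $h$ as a product of syllables $h=u_1u_2\cdots u_\ell$ with $u_i\in A_{(i)}$, obtained by grouping a geodesic $S$-word for $h$ according to which subgroup each letter belongs to; thus $\ell\le n$ and $\sum_i\abs{u_i}_{S_{A_{(i)}}}\le n$. Writing $g_i=u_1\cdots u_i$ for the partial products, the idea is to insert correctors $c_0=e,c_1,\dots,c_\ell=e$ so that the telescoped terms $\eta_i=(g_{i-1}c_{i-1})^{-1}(g_ic_i)=c_{i-1}^{-1}u_ic_i$ lie in $A_{(i)}\cap H$: membership in $A_{(i)}$ forces $c_{i-1},c_i\in A_{(i)}$, while membership in $H$ forces $p(g_ic_i)=e$. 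The crucial structural observation is that Condition~(2) forces $p(A)$ to be a \emph{finite index} subgroup of $G/H$ for every $A\in\mathcal{A}$: any $A'\neq A$ is joined to $A$ by a chain whose first edge gives $p(A\cap A_2)$ finite index, and $p(A\cap A_2)\le p(A)$. Since finite index subgroups of finitely generated groups are undistorted, this is exactly what makes the correctors choosable short, and by Proposition~\ref{p1} the relevant lengths in $G/H$ are controlled by $S$-lengths in $G$.

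The engine is a \emph{corrector lemma}: for a pair $(B,B')$ with $p(B\cap B')$ of finite index in $G/H$, and any element of $S$-length at most $n$, one can realize a prescribed projection by some $c\in B\cap B'$ with $\abs{c}_{S_B}$ and $\abs{c}_{S_{B'}}$ both $O(n)$. I would prove this by noting that $p(B\cap B')$ is finitely generated and undistorted in $G/H$, choosing a finite generating set $\bar V$ for it and lifting $\bar V$ into $B\cap B'$; a geodesic $\bar V$-word for the target projection has length $O(n)$ by undistortion, and its lift $c$ is then an $O(n)$-length product of elements of $B\cap B'$, hence short in both $S_B$ and $S_{B'}$. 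Because $p(g_i)$ need not lie in $p(A_{(i)}\cap A_{(i+1)})$, and because $A_{(i)}$ and $A_{(i+1)}$ need not satisfy the finite-index condition directly, at each seam I would route through a chain $A_{(i)}=B_0,\dots,B_l=A_{(i+1)}$ of length $l\le\abs{\mathcal{A}}$ supplied by Condition~(2), absorbing the bounded coset discrepancies via a fixed finite set of bounded-length coset representatives; each bridge contributes $O(1)$ extra factors drawn from the relevant $B_r\cap H$, so the total count stays $O(n)$ and all factors remain $O(n)$-short.

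The main obstacle is the bookkeeping at the seams. The exact requirement $p(g_ic_i)=e$ cannot be met by a single corrector, since $p(A_{(i)}\cap A_{(i+1)})$ is only finite index, not all of $G/H$; one must therefore track the accumulated coset discrepancy through the routing and verify that it can always be returned to the trivial coset by bounded-length elements of the intersection subgroups, without inflating either the number of factors or their lengths beyond $O(n)$. Once the corrector lemma and the finiteness of the representative set are in hand this is routine but delicate. The conceptual content lies entirely in the two consequences of the finite-index hypothesis extracted above — undistortion, which controls corrector length, and the fact that a finite index subgroup lies boundedly close to every point of $G/H$, which controls the discrepancies — and together they convert the naive telescoping into a factorization with the required $O(n)$ pieces of $O(n)$-bounded length.
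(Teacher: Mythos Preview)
Your overall architecture---syllable decomposition, telescoping with correctors drawn from $A_{(i)}\cap A_{(i+1)}$, undistortion of finite-index images, routing through chains---is exactly the paper's strategy. But there is a genuine gap in the endgame.

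You declare $T=\bigcup_{A\in\mathcal{A}} T_A$ and assert that the factorization will exhibit $H=\gen{\bigcup_A (A\cap H)}$, with every factor $\eta_i$ (including the bridge factors) lying in some $A\cap H$. This is false in general. Take $G=D_\infty=\presentation{s,t}{s^2=t^2=e}$, $H=\gen{st}\cong\Z$, $\mathcal{A}=\{\gen{s},\gen{t}\}$. Condition~(2) holds since $p(\gen{s}\cap\gen{t})=\{e\}$ has index $2$ in $G/H\cong\Z/2$. But $A\cap H=\{e\}$ for both $A$, so $\gen{\bigcup_A(A\cap H)}=\{e\}\ne H$; no element of $H$ other than $e$ can be written as a product of factors from the $A\cap H$'s, and your $T$ is empty. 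Your sentence ``returned to the trivial coset by bounded-length elements of the intersection subgroups'' is precisely where the argument breaks: elements of $A\cap A'$ only move you \emph{within} a coset of $p(A\cap A')$, never between cosets, so the discrepancy $p(g_ic_i)$ is stuck in a nontrivial coset and cannot be annihilated this way.

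The paper's fix is to enlarge the generating set. Fixing coset representatives $I\subset G$ for $F=\bigcap p(A\cap A')$ in $G/H$, the paper adjoins to $T$ both the conjugated generators $bT_Ab^{-1}$ for $b\in I$ and a finite set of ``seam'' elements $b\,a_{(b,b',A)}\,b'^{-1}\in H$ (one for each nonempty $bAb'^{-1}\cap H$). The telescoped factor $h_i$ then lies in $b_{i-1}A_ib_i^{-1}\cap H$, not in $A_i\cap H$; one peels off the fixed seam element $x_i$ to land in $b_{i-1}(A_i\cap H)b_{i-1}^{-1}$, whose $T$-length is controlled by $Dist_{A_i}^{A_i\cap H}$ via the conjugated generators. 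Your corrector lemma and routing are correct as stated; what is missing is exactly this enlargement of $T$ and the acceptance that the factors live in \emph{conjugates} of $A_i\cap H$ rather than in $A_i\cap H$ itself.
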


\begin{proof}
We define function $g=\max \set{Dist_{A}^{A\cap H}}{A \in \mathcal{A}}$. Let $K$ be the number of elements of $\mathcal{A}$. For each subgroup $A$ in $\mathcal{A}$ we fix a finite generating set $S_A$ for $A$. Let $S=\bigcup_{A\in \mathcal{A}}S_A$ and $\bar{S}=p(S)$. Then $S$ is a finite generating set for $G$ and $\bar{S}$ is a finite generating set for $G/H$. Moreover, $\abs{p(g)}_{\bar{S}}\leq \abs{g}_S$ for each $g$ in $G$. Let $F$ be the intersection of all $p(A\cap A')$, where $A$ and $A'$ are elements in $\mathcal{A}$ such that $p(A\cap A')$ is a finite index subgroup of $G/H$. Therefore, $F$ is a finite index subgroup of $G/H$. Thus, there is a finite generating set $J$ of $F$ and a positive constant $M$ such that for each $v$ in $F$
\[(1/M)\abs{v}_{\bar{S}}-1\leq \abs{v}_J\leq M\abs{v}_{\bar{S}}+M.\]
Also, there is a finite set $I$ of group elements in $G$ that contains the identity element $e$ of $G$ such that 
\[G/H=\bigcup_{a\in I} p(a) F.\]
Let $C$ be the maximum on the lengths of all elements in $I$ with respect to the finite generating set $S$.

For each $b$, $b'$ in $I$ and $A$ in $\mathcal{A}$ such that $bAb'^{-1} \cap H\neq \emptyset$, we fix a group element $a_{(b,b',A)}$ in $A$ such that $ba_{(b,b',A)}b'^{-1}$ belongs to $H$. Let
\[L_1=\max \set{\abs{a_{(b,b',A)}}_{S_A}}{bAb'^{-1} \cap H\neq \emptyset, b,b'\in I, A \in \mathcal{A}}\]
and 

\[T_1=\set{a_{(b,b',A)}}{bAb'^{-1} \cap H\neq \emptyset, b,b'\in I, A \in \mathcal{A}}.\]

For each subgroup $A$ in $\mathcal{A}$ we fix a finite generating set $T_A$ for $A\cap H$. Let $T_2$ be the union of all sets $bT_Ab^{-1}$, where $b$ is an element in $I$ and $A$ is an element in $\mathcal{A}$. Let $T=T_1\cup T_2$ and we will prove that $T$ is a finite generating set of $H$. Moreover, we will use the pair of generating sets $(S,T)$ to compute the distortion of $H$ in $G$.

For each element $v$ in $J$ and $A$, $A'$ in $\mathcal{A}$ such that $p(A\cap A')$ is a finite index subgroup of $G/H$ we choose an element $u_{v,A,A'}$ in $A\cap A'$ such that $p(u_{v,A,A'})=v$. We let \[\ell(v, A, A')=\max\{\abs{u_{v,A,A'}}_{S_A}, \abs{u_{v,A,A'}}_{S_{A'}}\}\] and \[L_2=\max\set{\ell(v, A, A')}{v\in J\text{ and } A, A' \in \mathcal{A} \text{ such that } [G/H:p(A\cap A')]<\infty}.\]

It is not hard to see that for each element $q$ in $F$ and $A$, $A'$ in $\mathcal{A}$ such that $p(A\cap A')$ is a finite index subgroup of $G/H$ we can choose an element $g$ in $A\cap A'$ such that $p(g)=q$ and $\abs{g}_{S_A}$, $\abs{g}_{S_{A'}}$ are both bounded above by $L_2\abs{q}_J$. Therefore, both $\abs{g}_{S_A}$, $\abs{g}_{S_{A'}}$ are bounded above by $L_2\bigl(M\abs{q}_{\bar{S}}+M\bigr)$.

Let $n$ be an arbitrary integer and $h$ element in $H$ such that $\abs{h}_S \leq n$. Then there is a word $w=w_1w_2\cdots w_m$ in $S$ that represents element $h$ with the following properties:
\begin{enumerate}
\item The number $m$ is at most $Kn$ and each $w_i$ is a word in the finite generating set $S_{A_i}$ of some group $A_i$ in $\mathcal{A}$. (We accept that some of subword $w_i$ may be empty.)
\item For each $i$ in $\{1,2, 3, \cdots, m-1\}$, $p(A_i\cap A_{i+1})$ is a finite index subgroup of $G/H$.
\item The sum $\ell(w_1)+\ell(w_2)+\cdots+\ell(w_m)$ is bounded above by the number $n$.
\end{enumerate}

For each $i$ in $\{1,2,\cdots, m-1\}$ let $b_i$ in $I$ and such that $p(w_1 w_2 \cdots w_i)$ is an element in $p(b_i)F$. Let $u_i$ in $A_i \cap A_{i+1}$ such that $p(u_i)=p(b_i^{-1}w_1 w_2 \cdots w_i)$ and both $\abs{u_i}_{S_{A_i}}$, $\abs{u_i}_{S_{A_{i+1}}}$ are bounded above by $L_2\bigl(M \abs{p(b_i^{-1}w_1 w_2 \cdots w_i)}_{\bar{S}}+~M\bigr)$. Also, $\abs{p(b_i^{-1}w_1 w_2 \cdots w_i)}_{\bar{S}}\leq \abs{b_i^{-1}w_1 w_2 \cdots w_i}_S\leq n+C$. Then both $\abs{u_i}_{S_{A_i}}$, $\abs{u_i}_{S_{A_{i+1}}}$ are bounded above by $L_2M(n+C+1)$. It is obvious that
\[p(w_1 w_2 \cdots w_i)=p(b_iu_i).\]

We let $h_1=w_1u_1^{-1}b_1^{-1}$, $h_m=b_{m-1}u_{m-1}w_m$, and $h_i=b_{i-1}(u_{i-1}w_i u_i^{-1})b_i^{-1}$ for each $i$ in $\{2,3,\cdots, m-1\}$. Obviously, 
\[h=h_1h_2\cdots h_m \text{ and } p(h_i)=e_{G/H} \text{ for each $i$ in $\{1,2,\cdots, m-1\}$}.\]
Therefore, $h_i$ is an element of $H$ for each $i$ in $\{1,2,3,\cdots, m\}$. We now prove that the length of each element $h_i$ with respect to $T$ is bounded above by $g\bigl((2L_2M+1)n+ 2L_2M(C+1)+L_1\bigr)+1$.

Since $h_1$ is an element $A_1b_1^{-1}\cap H$, the set $A_1b_1^{-1}\cap H$ is not empty. Let $a_{(e,b_1,A_1)}$ be the element in $A_1$ defined as above such that $x_1=a_{(e,b_1,A_1)}b_1^{-1}$ is an element in $H$. Therefore, $h_1x_1^{-1}=w_1u_1^{-1}a_{(e,b_1,A_1)}^{-1}$ is an element in $A_1\cap H$ and \[\abs{w_1u_1^{-1}a_{(e,b_1,A_1)}^{-1}}_{S_{A_1}}\leq n+2L_2M(n+C+1)+L_1\leq (2L_2M+1)n+ 2L_2M(C+1)+L_1.\] 
Therefore, \[\abs{h_1x_1^{-1}}_T\leq \abs{w_1u_1^{-1}a_{(e,b_1,A_1)}^{-1}}_{T_{A_1}}\leq Dist_{A_1}^{A_1\cap H}\bigl((2L_2M+1)n+ 2L_2M(C+1)+L_1\bigr).\]
This implies that \[\abs{h_1x_1^{-1}}_T\leq g\bigl((2L_2M+1)n+ 2L_2M(C+1)+L_1\bigr).\]
Also, $x_1$ is an element in $T$. Then \[\abs{h_1}_T\leq g\bigl((2L_2M+1)n+ 2L_2M(C+1)+L_1\bigr)+1.\]

Since $h_i$ is an element $b_{i-1}A_ib_i^{-1}\cap H$ for each $i$ in $\{2,3,\cdots, m-1\}$, the set $b_{i-1}A_ib_i^{-1}\cap H$ is not empty. Let $a_{(b_{i-1},b_i,A_i)}$ be the element in $A_i$ defined as above such that $x_i=b_{i-1}a_{(b_{i-1},b_i,A_i)}b_i^{-1}$ is an element in $H$. Therefore, $h_ix_i^{-1}=b_{i-1}(u_{i-1}w_iu_i^{-1}a_{(b_{i-1},b_i,A_i)}^{-1})b_{i-1}^{-1}$ is an element in $b_{i-1}A_ib_{i-1}^{-1}\cap H$ and \[\abs{u_{i-1}w_iu_i^{-1}a_{(b_{i-1},b_i,A_i)}^{-1}}_{S_{A_i}}\leq n+2L_2M(n+C+1)+L_1\leq (2L_2M+1)n+ 2L_2M(C+1)+L_1.\] 
Therefore, \[\abs{h_ix_i^{-1}}_T\leq \abs{u_{i-1}w_iu_i^{-1}a_{(b_{i-1},b_i,A_i)}^{-1}}_{T_{A_i}}\leq Dist_{A_i}^{A_i\cap H}\bigl((2L_2M+1)n+ 2L_2M(C+1)+L_1\bigr).\]
This implies that \[\abs{h_ix_i^{-1}}_T\leq g\bigl((2L_2M+1)n+ 2L_2M(C+1)+L_1\bigr).\]
Also, $x_i$ is an element in $T$. Then \[\abs{h_i}_T\leq g\bigl((2L_2M+1)n+ 2L_2M(C+1)+L_1\bigr)+1.\]

Since $h_m$ is an element $b_{m-1}A_m\cap H$, the set $b_{m-1}A_m\cap H$ is not empty. Let $a_{(b_{m-1},e,A_m)}$ be the element in $A_m$ defined as above such that $x_m=b_{m-1}a_{(b_{m-1},e,A_m)}$ is an element in $H$. Therefore, $h_mx_m^{-1}=b_{m-1}(u_{m-1}w_ma_{(b_{m-1},e,A_m)}^{-1})b_{m-1}^{-1}$ is an element in $b_{m-1}A_mb_{m-1}^{-1}\cap H$ and \[\abs{u_{m-1}w_ma_{(b_{m-1},e,A_m)}^{-1}}_{S_{A_m}}\leq n+2L_2M(n+C+1)+L_1\leq (2L_2M+1)n+ 2L_2M(C+1)+L_1.\] 
Therefore, \[\abs{h_mx_m^{-1}}_T\leq \abs{u_{m-1}w_ma_{(b_{m-1},e,A_m)}^{-1}}_{T_{A_m}}\leq Dist_{A_m}^{A_m\cap H}\bigl((2L_2M+1)n+ 2L_2M(C+1)+L_1\bigr).\]
This implies that \[\abs{h_mx_m^{-1}}_T\leq g\bigl((2L_2M+1)n+ 2L_2M(C+1)+L_1\bigr).\]
Also, $x_m$ is an element in $T$. Then \[\abs{h_m}_T\leq g\bigl((2L_2M+1)n+ 2L_2M(C+1)+L_1\bigr)+1.\]

Therefore, the group element $h$ can be represented by a word in $T$ and $\abs{h}_T$ is bounded above by $Kn g\bigl((2L_2M+1)n+ 2L_2M(C+1)+L_1\bigr)+Kn$. This implies that the subgroup $H$ is finitely generated and the distortion $Dist_{G}^H$ is dominated by the function $f(n)$.
\end{proof}

\begin{cor}
Let $G$ be the fundamental group of a simply connected finite graph of finitely generated groups and $H$ a finitely generated normal subgroup of $G$ not contained in any edge subgroup. We assume that the intersection of $H$ with any vertex subgroup is finitely generated. Then, the distortion $Dist_{G}^H$ is dominated by the function $f(n)=n \max \set{Dist_{A}^{A\cap H}(n)}{A \text{ is a vertex subgroup}}$.
\end{cor}

\begin{proof}
Let $p:G \rightarrow G{/}H$ be the canonical projection. Let $\mathcal{A}$ be the collection of all vertex subgroups of $G$. Since $G$ is the fundamental group of a simply connected finite graph of groups, the group $G$ is generated by the union of all vertex subgroups in $\mathcal{A}$. Also, the intersection of $H$ with any vertex subgroup is finitely generated. Therefore, $\mathcal{A}$ and $H$ satisfy condition (1) of Theorem \ref{theo1}. By Theorem 10 in \cite{MR0260879}, the image of each edge subgroup of $G$ under $p$ has finite index in $G{/}H$. Thus, $\mathcal{A}$ and $H$ satisfy condition (2) of Theorem \ref{theo1}. 
Therefore, the distortion $Dist_{G}^H$ is dominated by the function $f(n)=n \max \set{Dist_{A}^{A\cap H}(n)}{A \text{ is a vertex subgroup of $G$}}$. 
\end{proof}

One application of Theorem \ref{th1} and Theorem \ref{theo1} is to compute the distortion of collection of normal subgroups of right-angled Artin groups which all induce infinite cyclic quotient group. We first recall the concept of right-angled Artin groups and then state the result on divergence of right-angled Artin groups proved by Behrstock-Charney \cite{MR2874959}. We will use the divergence of right-angled Artin groups for the lower bound of the distortion we are working.

\begin{defn}[Right-angled Artin groups]
For each $\Gamma$ a finite simplicial graph the associated \emph{right-angled Artin group} $A_{\Gamma}$ has generating set $S$ the vertices of $\Gamma$, and relations $st = ts$ whenever $s$ and $t$ are adjacent vertices.
\end{defn}

\begin{thm}[Behrstock-Charney \cite{MR2874959}]
\label{thbc}
Let $\Gamma$ be a connected graph with at least 2 vertices. $A_\Gamma$ has linear divergence if and only if $\Gamma$ is a join; otherwise its divergence is quadratic.
\end{thm}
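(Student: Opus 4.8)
The statement is an equivalence together with a dichotomy, so the plan is to split it into the two implications ``\(\Gamma\) is a join \(\Rightarrow\) linear divergence'' and ``\(\Gamma\) connected but not a join \(\Rightarrow\) quadratic divergence,'' and to treat the quadratic case by establishing a quadratic upper bound and a quadratic lower bound separately. The starting observation is the standard dictionary between the combinatorics of \(\Gamma\) and the algebra of \(A_\Gamma\): a join decomposition \(\Gamma=\Gamma_1 * \Gamma_2\) corresponds to a direct product decomposition \(A_\Gamma \cong A_{\Gamma_1}\times A_{\Gamma_2}\), and, passing to complements, \(\Gamma\) fails to be a join exactly when its complement graph \(\bar{\Gamma}\) is connected.

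For the first implication, if \(\Gamma=\Gamma_1 * \Gamma_2\) then \(A_\Gamma \cong A_{\Gamma_1}\times A_{\Gamma_2}\) is a direct product of two infinite groups (each \(\Gamma_i\) is non-empty). I would then invoke the elementary fact that a finitely generated group splitting as a direct product of two infinite groups is \emph{wide}, i.e.\ has linear divergence: given two points on the sphere \(S_r(e)\), one joins them by a path of length \(O(r)\) staying outside \(B_{\rho r}(e)\) by first travelling \(\sim r\) in one factor to clear the ball, realigning in the second factor, and returning, all possible because both factors are unbounded. This gives \(\Div(A_\Gamma)\) linear.

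For the quadratic upper bound when \(\Gamma\) is connected, the plan is to show \(A_\Gamma\) is \emph{thick of order at most \(1\)} and then quote the result of Behrstock--Dru\c{t}u that thickness of order \(1\) forces divergence at most quadratic. The pieces of the thick decomposition are the cosets of the star subgroups \(A_{\operatorname{st}(v)}\). Since \(\operatorname{st}(v)=\{v\} * \Lk(v)\) with \(\Lk(v)\neq\emptyset\) by connectedness, each \(A_{\operatorname{st}(v)}\cong \Z\times A_{\Lk(v)}\) is itself wide by the previous paragraph; these subgroups contain all the generators, so they coarsely cover \(A_\Gamma\), and for adjacent \(u,v\) the subgroups \(A_{\operatorname{st}(u)}\) and \(A_{\operatorname{st}(v)}\) share the infinite subgroup \(\gen{u,v}\cong\Z^2\). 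Connectedness of \(\Gamma\) then chains the pieces together through infinite coarse intersections, which is precisely thickness of order \(1\).

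The main obstacle is the quadratic lower bound. Here I would exhibit a single pair of points on each sphere whose complementary distance is \(\succeq r^2\), which suffices since divergence is a supremum over such pairs. The natural source is a \emph{contracting} (rank-one) element: when \(\Gamma\) is not a join I expect that a cyclically reduced word whose support is all of \(\Gamma\) is contracting, because its support graph is not a join, and the endpoints \(g^{r},g^{-r}\) of a segment of its axis cannot be joined outside \(B_{\rho r}(e)\) by anything shorter than a quadratic path, since the strong contraction property makes nearest-point projection to the axis coarsely Lipschitz and forces any avoiding path to project across a segment of length \(\sim r\) while staying at distance \(\sim r\). The two delicate points, which I expect to absorb most of the work, are verifying that the non-join hypothesis genuinely produces a contracting element of \(A_\Gamma\) (a RAAG-specific normal-form argument identifying rank-one elements with those whose support is not a join) and establishing the clean implication ``contracting axis \(\Rightarrow\) divergence at least quadratic'' inside the \(\CAT(0)\) cube complex on which \(A_\Gamma\) acts. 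Combining the quadratic upper and lower bounds then pins the divergence to quadratic and completes the dichotomy.
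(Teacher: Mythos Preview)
The paper does not prove this theorem at all: it is stated as a result of Behrstock--Charney with a citation to \cite{MR2874959} and is used as a black box (for example in Corollary~\ref{c1} and Proposition~\ref{prop1}). So there is no ``paper's own proof'' to compare against.

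That said, your outline is a faithful sketch of the Behrstock--Charney argument. The linear case via direct products, the quadratic upper bound via thickness of order~$1$ relative to star subgroups, and the quadratic lower bound via a rank-one (contracting) element whose support is all of $\Gamma$ are exactly the ingredients in their paper. You have also correctly identified where the real work lies: showing that a cyclically reduced element with full support and non-join support graph acts as a rank-one isometry on the Salvetti complex, and deducing the quadratic lower bound from contraction. Both of these are carried out in \cite{MR2874959} (the first via a careful analysis of parallel sets of hyperplanes, the second by a now-standard projection argument), so your plan would succeed once those lemmas are supplied, but they are not one-liners.
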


\begin{cor}
\label{c1}
Let $\Gamma$ be a simplicial connected graph with at least two vertices. Let $p$ be an arbitrary group homomorphism from $A_\Gamma$ to $\Z$ and $N$ the kernel of the map $p$. Let $\Gamma'$ be the induced subgraph of $\Gamma$ with the vertices which are mapped to nonzero numbers in $\Z$ by $p$. If the graph $\Gamma'$ is connected and the star of $\Gamma'$ in $\Gamma$ contains all vertices of $\Gamma$, then the subgroup $N$ is finitely generated and the distortion of $N$ in $A_\Gamma$ is at most quadratic. Moreover, if each vertex of $\Gamma$ is mapped to a nonzero number in $\Z$, then the distortion of the $N$ in $A_\Gamma$ is linear when $\Gamma$ is a join and the distortion is quadratic otherwise. 
\end{cor}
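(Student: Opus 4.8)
The plan is to split the statement into an upper bound that holds in full generality (distortion at most quadratic) and, under the extra hypothesis that every vertex has nonzero image, a matching lower bound that pins the distortion down to linear or quadratic. Throughout I write $G = A_\Gamma$ and $H = N = \ker p$; since the star hypothesis forces $\Gamma'$ to be nonempty, the image $p(G)$ is a nonzero, hence infinite cyclic, subgroup of $\Z$, so $H$ has infinite index. I note that the hypothesis that $\Gamma'$ is connected and its star contains all vertices is exactly the finite-generation criterion of Meier--VanWyk, which is reassuring.

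For the upper bound I would apply Theorem \ref{theo1} to the finite collection $\mathcal{A} = \set{A_{\mathrm{St}(v)}}{v \in \Gamma'}$, where $\mathrm{St}(v)$ is the closed star of $v$ and $A_{\mathrm{St}(v)}$ the associated special subgroup. That the star of $\Gamma'$ contains every vertex of $\Gamma$ says precisely that the closed stars $\mathrm{St}(v)$, $v \in \Gamma'$, cover the vertex set, so the $A_{\mathrm{St}(v)}$ generate $G$; this is condition (1). For condition (2) I use the standard fact $A_{\Lambda_1} \cap A_{\Lambda_2} = A_{\Lambda_1 \cap \Lambda_2}$ for induced subgraphs: if $v,v'\in\Gamma'$ are joined by a path $v = u_0, u_1, \ldots, u_k = v'$ in the connected graph $\Gamma'$, then $u_i \in \mathrm{St}(u_i)\cap \mathrm{St}(u_{i+1})$, so $A_{\mathrm{St}(u_i)}\cap A_{\mathrm{St}(u_{i+1})}$ contains $u_i$, whose image $p(u_i)\neq 0$ generates a finite index subgroup of $\Z$. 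The key local computation is that $\mathrm{St}(v)$ is the join of $v$ with its link, so $A_{\mathrm{St}(v)} \cong \Z \times A_{\Lk(v)}$, and projection to the second factor carries $A_{\mathrm{St}(v)}\cap H$ isomorphically onto the finite-index subgroup $(p|_{A_{\Lk(v)}})^{-1}(p(v)\Z)$ of $A_{\Lk(v)}$; this makes $A_{\mathrm{St}(v)}\cap H$ finitely generated and, because $|p(g)|$ is comparable to $|g|$, undistorted in $A_{\mathrm{St}(v)}$. Thus each term $Dist_{A_{\mathrm{St}(v)}}^{A_{\mathrm{St}(v)}\cap H}$ is linear, and Theorem \ref{theo1} yields $Dist_G^H \preceq n \cdot (\text{linear}) \sim n^2$. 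This proves both that $N$ is finitely generated and the at-most-quadratic bound, so it settles the first assertion completely.

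For the refinement assume every vertex lies in $\Gamma'$, so $\Gamma' = \Gamma$. Since $\Gamma$ is connected with at least two vertices it contains an edge, hence $G$ contains $\Z^2$ and is not virtually cyclic; as $G/H \cong \Z$ this forces $H$ to be infinite as well as of infinite index. When $\Gamma$ is not a join, Theorem \ref{thbc} gives that $Div(G)$ is quadratic, and Theorem \ref{th1} gives $Div(G)\preceq Dist_G^H$, so the distortion is at least quadratic; combined with the quadratic upper bound already obtained, it is exactly quadratic.

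The remaining, and genuinely hardest, case is $\Gamma$ a join, where I must improve the upper bound from quadratic to linear (the lower bound is automatic, since the distortion of an infinite subgroup is always at least linear). Note that Theorem \ref{theo1} can never produce a linear bound, because of its intrinsic factor of $n$, so a direct argument is required. Writing $\Gamma = \Gamma_1 * \Gamma_2$ and $G = A_{\Gamma_1}\times A_{\Gamma_2}$, the mechanism I would exploit is that any vertex $v$ of $\Gamma_1$ commutes with any vertex $w$ of $\Gamma_2$; hence a mixed element $v^{p(w)} w^{-p(v)} \in H$ satisfies $(v^{p(w)} w^{-p(v)})^n = v^{n\,p(w)} w^{-n\,p(v)}$, so an entire power is absorbed at linear cost. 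The plan is to rewrite an arbitrary $(g_1, g_2)\in H$ of length $r$ as a product of $O(r)$ such generators, compensating the height fluctuations of a geodesic word for $g_1$ against a commuting vertex of $\Gamma_2$ (and symmetrically for $g_2$), so that --- unlike the non-join case --- no backtracking is needed and the quadratic term disappears. The main obstacle is precisely this bookkeeping: controlling the non-abelian interaction of distinct $\Gamma_1$-letters while the compensation is performed in $\Gamma_2$, and handling the arithmetic of the possibly non-unit values $p(v)$. This is where the join hypothesis does the real work, and it is the step I expect to occupy the bulk of the proof (compare the Bestvina--Brady computation in \cite{Tran}).
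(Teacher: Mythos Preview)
Your plan is sound, and for the upper bound and the non-join lower bound it runs parallel to the paper, but with a different choice of $\mathcal{A}$: the paper takes $\mathcal{A}$ to be the set of \emph{edge} subgroups $A_e\cong\Z^2$ for edges $e$ meeting $\Gamma'$, rather than your star subgroups $A_{\mathrm{St}(v)}$. Both collections verify the hypotheses of Theorem~\ref{theo1} for the same reasons (connectedness of $\Gamma'$ gives condition~(2), the star hypothesis gives condition~(1)), and in both cases the local distortion is linear; the edge version is a little lighter because undistortion of a rank-one subgroup of $\Z^2$ is immediate, whereas your star version needs the short retraction argument you sketch (which is correct). The non-join quadratic lower bound via Theorems~\ref{th1} and~\ref{thbc} is exactly what the paper does.

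For the join case you correctly identify both the mechanism (compensate $\Gamma_1$-letters against a commuting $\Gamma_2$-generator) and the obstacle (the values $p(v)$ need not be units). The paper dissolves this obstacle rather than fighting it: it first passes to finite-index subgroups $A_1\le A_{\Gamma_1}$ and $A_2\le A_{\Gamma_2}$ with $p(A_1)=p(A_2)$, so that one may choose $a\in A_1$, $b\in A_2$ with $p(a)=p(b)=1$. Then the generating set for $N_1=\ker p|_{A_1\times A_2}$ is simply $\{\,s\,b^{-p(s)}:s\in S_1\}\cup\{\,s\,a^{-p(s)}:s\in S_2\}$, and the rewriting of $h=(g_1,g_2)$ becomes a one-line telescoping with no divisibility issues and no interaction among distinct $\Gamma_1$-letters (each $s$ is paired only with the central element $b$). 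Since distortion is insensitive to finite index, this suffices. Your proposed generators $v^{p(w)}w^{-p(v)}$ would also work, but the arithmetic you worry about is genuinely awkward without the normalisation; adopting the finite-index reduction is the missing simplification.
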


\begin{proof}
We can assume that the map $p$ is surjective. We let $\mathcal{A}$ is the collection of all subgroups $A_e$, where each $A_e$ is the right-angled Artin subgroup induced by some edge $e$ with at least one endpoint in $\Gamma'$. We can verify easily that the collection $\mathcal{A}$ and the subgroup $N$ satisfy all hypothesis of Theorem \ref{theo1}. Also each subgroup $A_e$ is isomorphic to $\Z^2$. Therefore, it is not hard to see that the subgroup $N \cap A_e$ is finitely generated and the distortion of $N \cap A_e$ in $A_e$ is linear. Therefore, the subgroup $N$ is finitely generated and the distortion of the $N$ in $A_\Gamma$ is at most quadratic. 

We now assume that each vertex of $\Gamma$ is mapped to a nonzero number in $\Gamma$. If $\Gamma$ is not a join, then the divergence of $\Gamma$ is quadratic (see Theorem \ref{thbc}). Also, the subgroup $N$ is a finitely generated normal subgroup of $A_\Gamma$. Therefore, the distortion of $H_\Gamma$ in $A_\Gamma$ is exactly quadratic. 

We now assume that $\Gamma$ is a join of $\Gamma_1$ and $\Gamma_2$. We will prove that the distortion of the $N$ in $A_\Gamma$ is linear. It is a well-known result that $A_\Gamma$ is the direct product of $A_{\Gamma_1}$ and $A_{\Gamma_2}$. Since $p(A_{\Gamma_1})$ and $p(A_{\Gamma_2})$ are nontrivial subgroups of $\Z$, there are finite index subgroup $A_1$ of $A_{\Gamma_1}$ and finite index subgroup $A_2$ of $A_{\Gamma_2}$ such that $p(A_1)=p(A_2)=p(A_1\times A_2)$. Let $p_1=~p_{|A_1 \times A_2}:~A_1 \times A_2~\rightarrow p(A_1\times A_2)$ and $N_1$ is the kernel of $p_1$. It is obvious that $A_1\times A_2$ is a finite index subgroup of $A_\Gamma$ and $N_1$ is a finite index subgroup of $N$. Therefore, it is sufficient to prove that the distortion of $N_1$ in $A_1 \times A_2$ is linear.

In fact, let $S_1$ be a finite generating set of $A_1$ and $S_2$ a finite generating set of $A_2$. Then, $S=S_1\cup S_2$ is a finite generating set of $A_1 \times A_2$. We can assume that $p(A_1\times A_2)=\Z$ and there are elements $a\in S_1$, $b\in S_2$ such that $p_1(a)=p_1(b)=1$. Let $T_1$ be the set of all elements of the form $sb^{-p_1(s)}$ where each $s$ is element in $S_1$. Similarly, let $T_2$ be the set of all elements of the form $sa^{-p_1(s)}$ where each $s$ is element in $S_2$. We will prove that $T=T_1\cup T_2$ is a finite generating set of $N_1$. Moreover, we will use finite generating sets $(S,T)$ to prove the distortion of $N_1$ in $A_1 \times A_2$ is linear.

Let $K=\max_{s\in S}{\abs{p_1(s)}}$. Let $n$ be an arbitrary positive integer and $h$ be an arbitrary element in $N_1$ such that $\abs{h}_S\leq n$. Then we can write $h=(a^{m_1}_1a^{m_2}_2\cdots a^{m_k}_k)(b^{n_1}_1b^{n_2}_2\cdots b^{n_\ell}_\ell)$ such that:

\begin{enumerate}
\item Each $a_i$ is element in $S_1$ and each $b_j$ is element in $S_2$.
\item $\bigl(\abs{m_1}+\abs{m_2}+\cdots+\abs{m_k}\bigr)+\bigl(\abs{n_1}+\abs{n_2}+\cdots+\abs{n_\ell}\bigr)\leq n$.
\item $\bigl(m_1p_1(a_1)+m_2p_1(a_2)+\cdots+m_kp_1(a_k)\bigr)+\bigl(n_1p_1(b_1)+n_2p_1(b_2)+\cdots+n_\ell p_1(b_\ell)\bigr)=0$
\end{enumerate}
Let $m=m_1p_1(a_1)+m_2p_1(a_2)+\cdots+m_kp_1(a_k)$. Then, $n_1p_1(b_1)+n_2p_1(b_2)+\cdots+n_\ell p_1(b_\ell)=-m$ and $\abs{m}\leq Kn$. Since $a$ commutes with each $b_j$, $b$ commutes with each $a_i$ and $a$, $b$ commute, we can rewrite $h$ as follows: 
\[h=(a^{m_1}_1a^{m_2}_2\cdots a^{m_k}_kb^{-m})(ba^{-1})^m(a^mb^{n_1}_1b^{n_2}_2\cdots b^{n_\ell}_\ell).\]
Also, 
\[a^{m_1}_1a^{m_2}_2\cdots a^{m_k}_kb^{-m}=(a_1b^{-p_1(a_1)})^{m_1}(a_2b^{-p_1(a_2)})^{m_2}\cdots (a_kb^{-p_1(a_k)})^{m_k}\]
\[a^mb^{n_1}_1b^{n_2}_2\cdots b^{n_\ell}_\ell=(b_1a^{-p_1(b_1)})^{n_1}(b_2a^{-p_1(b_2)})^{n_2}\cdots(b_\ell a^{-p_1(b_\ell)})^{n_\ell}\]

and $a_ib^{-p_1(a_i)}$, $b_ja^{-p_1(b_j)}$ and $ba^{-1}$ all belong to $T$. Therefore, 
\begin{align*}
\abs{h}_T\leq \bigl(\abs{m_1}+\abs{m_2}+\cdots+\abs{m_k}\bigr)+\bigl(\abs{n_1}+\abs{n_2}+\cdots+\abs{n_\ell}\bigr)+\abs{m}\leq (K+1)n.
\end{align*}
Therefore, the distortion function $Dist^{N_1}_{A_1\times A_2}$ is bounded above by $(K+1)n$.

\end{proof}

\begin{rem}
If the map $p$ in the above corollary maps each vertex of $\Gamma$ to 1, then the subgroup $N$ is the Bestvina-Brady subgroup of $A_\Gamma$ (see \cite{MR1465330}). We remark that the distortion of Bestvina-Brady subgroups were also computed by the author in \cite{Tran}.

In Corollary \ref{c1}, the hypothesis that the graph $\Gamma'$ is connected and its star in $\Gamma$ contains all vertices of $\Gamma$ is also a necessary condition for the subgroup $N$ to be finitely generated (see Theorem 6.1 in \cite{MR1337468}).

In the above corollary, we observe that if each vertex of $\Gamma$ is mapped to a nonzero number in $\Z$, then the distortion of the subgroup $N$ and the divergence of the whole group $A_\Gamma$ are equivalent (they are both linear or both quadratic). However, this fact is no longer true in general when the image of some vertex of $\Gamma$ under $p$ is zero (see Proposition~\ref{pp1}).

The normal subgroups of right-angled Artin groups in Corollary \ref{c1} all induce infinite cyclic quotient groups. Therefore, we can use Theorem 4.1 in \cite{MR1254309} instead of Theorem \ref{th1} in this paper to prove the lower bound of the distortion of $N$. However, we will examine distortion of normal subgroups of right-angled Artin groups that induce non cyclic quotient groups in Section~\ref{GBBS}. In this case, we can only use Theorem \ref{th1} to compute the lower bound of the distortion.
\end{rem}

\begin{figure}
\begin{tikzpicture}[scale=2]


\draw (-5,1) node[circle,fill,inner sep=2pt](a){} -- (-6,0); \draw (-5,1) -- (-5,0); \draw (-5,1) -- (-4,0); \draw (-5,1) -- (-3,0);

\draw (-6,0) node[circle,fill,inner sep=2pt](a){} -- (-5,0) node[circle,fill,inner sep=2pt](a){} -- (-4,0) node[circle,fill,inner sep=2pt](a){} -- (-3,0) node[circle,fill,inner sep=2pt](a){}; 

\node at (-5,1.2) {$b$};\node at (-6,-0.2) {$a_1$};\node at (-5,-0.2) {$a_2$};\node at (-4,-0.2) {$a_3$};\node at (-3,-0.2) {$a_4$};
\end{tikzpicture}
\caption{The right angled-Artin group with the above defining graph has linear divergence but it contains a finitely generated normal subgroup with quadratic distortion}
\label{a1}
\end{figure}

\begin{prop}
\label{pp1}
Let $\Gamma$ be the graph in Figure \ref{a1}. Let $p$ be a group homomorphism from $A_\Gamma$ to $\Z$ that maps each $a_i$ to $1$ and $b$ to zero. Let $N$ be the kernel of the map $p$. Then the distortion of $N$ in $A_\Gamma$ is quadratic. 
\end{prop}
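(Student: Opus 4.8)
The plan is to exploit that $\Gamma$ is a join and thereby reduce the computation to the Bestvina--Brady subgroup of a path.

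Reading off Figure \ref{a1}, the vertex $b$ is adjacent to all of $a_1,a_2,a_3,a_4$, while $a_1,a_2,a_3,a_4$ span a path, which I call $\Lambda$. Thus $\Gamma$ is the join of the single vertex $\{b\}$ with $\Lambda$, so $A_\Gamma=\langle b\rangle\times A_\Lambda\cong\Z\times A_\Lambda$ with $\langle b\rangle$ central. Writing $q=p|_{A_\Lambda}$, the hypotheses $p(b)=0$ and $p(a_i)=1$ show that $q\colon A_\Lambda\to\Z$ sends each $a_i$ to $1$, so $N_0:=\ker q$ is the Bestvina--Brady subgroup of $A_\Lambda$. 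Since $p(b)=0$, an element $b^k w$ (with $w\in A_\Lambda$) lies in $N$ iff $q(w)=0$; hence $N=\langle b\rangle\times N_0$.

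For the upper bound I would invoke Corollary \ref{c1} directly: the induced subgraph $\Gamma'$ on the vertices with nonzero image is exactly $\Lambda$, which is connected, and its star in $\Gamma$ contains $b$ (as $b$ is adjacent to $a_1$), hence all of $\Gamma$. Therefore $N$ is finitely generated and $Dist^N_{A_\Gamma}$ is at most quadratic.

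The lower bound is the crux. It cannot come from $Div(A_\Gamma)$ via Theorem \ref{th1}, because $\Gamma$ is a join and so $A_\Gamma$ has only linear divergence by Theorem \ref{thbc}; this is exactly the phenomenon the proposition is meant to illustrate. Instead I would localize inside the factor $A_\Lambda$. The elementary ingredient is that for a direct product, distortion dominates that of each factor: picking generating sets $S_i\supseteq T_i$ for $G_i\supseteq H_i$, the word metric on $G_1\times G_2$ with generating set $S_1\cup S_2$ is the sum of the two factor metrics, and likewise for $H_1\times H_2$ with $T_1\cup T_2$; evaluating on elements $(h_1,e)$ then gives $Dist^{H_1}_{G_1}\preceq Dist^{H_1\times H_2}_{G_1\times G_2}$. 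Applied to $N=\langle b\rangle\times N_0\le\Z\times A_\Lambda$, this yields $Dist^{N_0}_{A_\Lambda}\preceq Dist^N_{A_\Gamma}$. Since $\Lambda$ is connected but not a join and $q$ sends every vertex to a nonzero integer, the ``moreover'' part of Corollary \ref{c1} (applied to $A_\Lambda$ and $q$) gives that $Dist^{N_0}_{A_\Lambda}$ is quadratic. Combined with the upper bound, $Dist^N_{A_\Gamma}$ is quadratic.

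The main obstacle is conceptual rather than computational: one must notice that the divergence-based lower bound of Theorem \ref{th1} is too weak here and that the quadratic behaviour is hidden in the non-join direct factor $A_\Lambda$. The only technical step beyond the cited results is the product-distortion inequality, which is routine but worth stating explicitly; the rest is bookkeeping with the splitting $A_\Gamma=\Z\times A_\Lambda$.
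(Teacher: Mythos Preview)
Your proposal is correct and follows essentially the same strategy as the paper: split $A_\Gamma = \langle b\rangle \times A_\Lambda$ and $N = \langle b\rangle \times N_0$, then reduce to the quadratic distortion of the Bestvina--Brady subgroup $N_0$ in $A_\Lambda$ via Corollary~\ref{c1}. The paper simply asserts in one line that the distortion of $N$ in $A_\Gamma$ equals that of $N_0$ in $A_\Lambda$, relying implicitly on the standard direct-product fact that you spell out explicitly as your product-distortion inequality.
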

\begin{proof}
Let $\Gamma_1$ be the subgraph of $\Gamma$ induced by $a_1$, $a_2$, $a_3$, and $a_4$. Therefore, the group $A_\Gamma$ is the direct product of the subgroup $A_{\Gamma_1}$ and the cyclic subgroup generated by $b$. We also observe that the group $N$ is the direct product of the Bestvina-Brady subgroup $N_1$ of $A_{\Gamma_1}$ and the cyclic subgroup generated by $b$. Since the distortion of $N_1$ in $A_{\Gamma_1}$ is quadratic (see Corollary \ref{c1} or \cite{Tran}), the distortion of $N$ in $A_\Gamma$ is also quadratic.
\end{proof}

\begin{rem}
In the above proposition, the divergence of the whole group $A_\Gamma$ is linear but the distortion of $N$ in $A_\Gamma$ is quadratic. Therefore, Proposition \ref{pp1} provides an example of the non equivalent relation between the group divergence and normal subgroup distortion.
\end{rem}




\section{Macura's examples}
Macura \cite{MR3032700} introduced a class of $\CAT(0)$ groups (i.e. \emph{groups which act properly and cocompactly on some $\CAT(0)$ spaces}) to study group divergence. We revisit these groups and investigate the distortion of their normal subgroups to study a connection between group divergence and subgroup distortion. First, we will review the class of groups introduced by Macura \cite{MR3032700}. 

For each integer $d\geq2$, we define \[G_d=\langle a_0, a_1,\cdots, a_d |a_0a_1=a_1a_0, a^{-1}_ia_0a_i=a_{i-1}, \text{for $2\leq i\leq d$}\rangle.\] 
The divergence of each group $G_d$ is given by the following theorem. 

\begin{thm}\cite{MR3032700}
\label{th2}
For each integer $d\geq 2$ the divergence of each $G_d$ is a polynomial of degree $d$.
\end{thm}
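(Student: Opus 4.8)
The statement is Macura's \cite{MR3032700}, so the shortest honest proof is simply to cite her $\CAT(0)$ analysis. Instead I will outline a proof that fits the framework built in this paper, splitting the polynomial estimate into the two matching halves $\Div(G_d)\preceq n^d$ and $\Div(G_d)\succeq n^d$. The plan is to realise $G_d$ as a free-by-cyclic group, push the upper bound through Theorem \ref{th1}, and import the lower bound from the $\CAT(0)$ geometry.

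First I would fix the homomorphism $\phi\colon G_d\to\Z$ determined by $\phi(a_i)=1$ for $0\le i\le d-1$ and $\phi(a_d)=0$. The relations $a_i^{-1}a_0a_i=a_{i-1}$ force $\phi(a_0)=\cdots=\phi(a_{d-1})$, so up to scaling this is the only choice whose kernel avoids the commuting pair $\langle a_0,a_1\rangle\cong\Z^2$; keeping this common value nonzero is exactly what makes the fibre free rather than containing a $\Z^2$. Writing $x_i=a_0a_i^{-1}$ for $1\le i\le d-1$, I would check that $F_d=\ker\phi=\langle x_1,\dots,x_{d-1},a_d\rangle$ is free of rank $d$ and that $t=a_0$ exhibits $G_d=F_d\rtimes_\psi\Z$ as a mapping torus with monodromy $\psi(f)=a_0^{-1}fa_0$; concretely one rewrites the presentation of $G_d$ in the generators $x_1,\dots,x_{d-1},a_d,a_0$ and verifies that $\psi$ is an automorphism of the free group $F_d$.

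The core computation is the growth rate of $\psi$. A direct calculation gives $\psi(x_1)=x_1$, $\psi(x_i)=x_{i-1}^{-1}x_i$ for $2\le i\le d-1$, and $\psi(a_d)=a_dx_{d-1}$, so $\psi$ is unipotent with respect to the filtration $\langle x_1\rangle\subset\langle x_1,x_2\rangle\subset\cdots\subset F_d$. An induction up this filtration shows $x_i$ grows with degree $i-1$ and $a_d$ with degree $d-1$, hence $\norm{\psi^n}\sim n^{d-1}$. The standard distortion estimate for the fibre of a mapping torus — collect all $t$-letters of a fibre word of $G_d$-length $r$ to one side, so that each of the $O(r)$ fibre letters is replaced by some $\psi^{\pm h}$ of $F_d$-length $O(r^{d-1})$ — then yields $Dist_{G_d}^{F_d}\preceq r\cdot\norm{\psi^r}\sim r^{d}$, and an iterated-commutator witness in the spirit of $z^{n^2}=[x^n,y^n]$ in the Heisenberg group makes this sharp, so $Dist_{G_d}^{F_d}\sim n^d$. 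Since $F_d$ is a finitely generated infinite normal subgroup of infinite index, Theorem \ref{th1} gives $\Div(G_d)\preceq Dist_{G_d}^{F_d}\preceq n^d$, which is the upper bound (and only the upper half of the distortion estimate is actually needed for this).

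The lower bound $\Div(G_d)\succeq n^d$ is the main obstacle, and it is precisely the direction the distortion machinery cannot supply: Theorem \ref{th1} bounds divergence from above by distortion, never from below, so the degree-$d$ distortion of $F_d$ gives no lower bound on divergence. Here I would fall back on Macura's geometric argument \cite{MR3032700}, which builds an explicit $\CAT(0)$ model for $G_d$ and produces, for each radius $r$, a pair of points on the sphere $S_r$ whose cheapest detour around $B_{r/2}$ has length at least of order $r^d$, the bound coming from tracking how the ``templates'' arising from the relations $a_i^{-1}a_0a_i=a_{i-1}$ stack across $d$ levels. Reconstructing this forced-detour estimate from scratch is the genuinely hard step; once the free-by-cyclic description and the degree of $\psi$ are in hand, the remaining upper-bound half is formal.
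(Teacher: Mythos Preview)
The paper does not prove Theorem~\ref{th2}; it is quoted from Macura~\cite{MR3032700}. The only in-house contribution is Remark~\ref{remark1}, which observes that the upper bound $\Div(G_d)\preceq n^d$ also follows from Theorem~\ref{th1} combined with the distortion bound of Proposition~\ref{p2}. Your outline is exactly this: upper bound via fibre distortion, lower bound deferred to Macura's $\CAT(0)$ templates. So at the level of strategy you and the paper agree, and you are right that the lower bound is the part no amount of distortion machinery will supply.

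Where you diverge is in the choice of fibre, and your choice is both more complicated and slightly miscalculated. You set $\phi(a_d)=0$, but your claim that ``up to scaling this is the only choice'' is wrong: the relations force $\phi(a_0)=\cdots=\phi(a_{d-1})$ and leave $\phi(a_d)$ completely free. The paper takes $\phi(a_d)=1$ (so \emph{all} $a_i\mapsto 1$), giving $x_i=a_0^{-1}a_i$ for $1\le i\le d$; with this the alternate presentation $G_d=\langle x_1,\dots,x_d,t\mid tx_1t^{-1}=x_1,\ tx_it^{-1}=x_ix_{i-1}^{-1}\rangle$ drops out immediately, freeness of $F_d=\langle x_1,\dots,x_d\rangle$ is manifest, and the polynomial growth $|\Phi_d^{\pm n}(x_i)|\preceq n^{i-1}$ is a clean double induction (Lemmas~3.2--3.3). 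Your asymmetric choice forces you to check by hand that $\langle x_1,\dots,x_{d-1},a_d\rangle$ is free and equals $\ker\phi$, which you do not actually carry out, and your formula $\psi(a_d)=a_dx_{d-1}$ is incorrect for $d\ge 3$: from $a_0^{-1}a_d=a_da_{d-1}^{-1}$ one gets $\psi(a_d)=a_d\,(a_{d-1}^{-1}a_0)=a_d\,x_{d-2}^{-1}x_{d-1}$. The unipotent growth conclusion survives this slip, but it illustrates why the paper's symmetric choice is the right one. Finally, your ``iterated-commutator witness'' for the lower distortion bound is unnecessary here and is not what the paper does: only the \emph{upper} distortion bound feeds into $\Div(G_d)\preceq n^d$, and the paper recovers the lower distortion bound a posteriori from Macura's divergence via Theorem~\ref{th1} (this is Theorem~\ref{th3}).
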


We now construct a normal subgroup in each $G_d$ with the distortion the same as the divergence of $G_d$. We note that there is another presentation for each $G_d$ which is given by \[G_d=\langle x_1, x_2,\cdots, x_d, t |tx_1t^{-1}=x_1, tx_it^{-1}=x_ix^{-1}_{i-1}, \text{for $2\leq i\leq d$}\rangle.\]

We can see that the above two presentations of $G_d$ are equivalent by the map on generators $a_0\mapsto t^{-1}$, $a_i\mapsto t^{-1}x_i$ and its inverse $t\mapsto a^{-1}_0$, $x_i \mapsto a^{-1}_0a_i$. From the second presentation, we observe that the subgroup $F_d$ of $G_d$ generated by $x_1, x_2, \cdots, x_d$ is a free group of rank $d$. Moreover, the group endomorphism $\Phi_d$ that maps $x_1$ to $x_1$, $x_i$ to $x_ix^{-1}_{i-1}$ for each $2\leq i\leq d$ is a group automorphism with the inverse $\Psi_d$ that maps $x_i$ to $x_ix_{i-1}\cdots x_1$ for each $1\leq i\leq d$. Obviously, each $G_d$ is a free by cyclic group $F_d \rtimes_{\Phi_d} \Z$. 

The following proposition shows an upper bound on the distortion of each subgroups $F_d$ in $G_d$.

\begin{prop}
\label{p2}
For each integer $d\geq 2$ let $G_d$ and $F_d$ be groups defined as above. Then the subgroup distortion of each $F_d$ in $G_d$ is bounded above by a polynomial of degree $d$.
\end{prop}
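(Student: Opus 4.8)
The plan is to exploit the semidirect product structure $G_d = F_d \rtimes_{\Phi_d} \Z$ together with the standard \emph{pushing the stable letter} rewriting trick. I would work with the generating set $S = \{x_1, \ldots, x_d, t\}$ of $G_d$ and the free generating set $T = \{x_1, \ldots, x_d\}$ of $F_d$, recalling that conjugation by $t$ realizes $\Phi_d$, so that $t^k x_i t^{-k} = \Phi_d^k(x_i)$ for every $k \in \Z$ (with $\Phi_d^{-1} = \Psi_d$). Given $h \in F_d$ with $\abs{h}_S \leq r$, first fix a word $w$ of length at most $r$ in $S$ representing $h$. Since $h \in F_d$, the total $t$-exponent of $w$ is zero. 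Reading $w$ from left to right and letting $k_\ell$ denote the partial sum of $t$-exponents occurring before the $\ell$-th letter $x_{i_\ell}^{\pm 1}$, one conjugates each such letter past the block of $t$'s preceding it; because the total $t$-exponent vanishes, the trailing powers of $t$ cancel and one obtains, as an identity in $G_d$,
\[ h = \prod_\ell \Phi_d^{k_\ell}\bigl(x_{i_\ell}^{\pm 1}\bigr), \]
a product of at most $r$ factors, each lying in $F_d$ and equal to the image of a single generator under $\Phi_d^{k_\ell}$ with $\abs{k_\ell} \leq r$.

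The heart of the argument is then a polynomial bound on the $T$-length of these factors. I would set $L_i(k) = \abs{\Phi_d^k(x_i)}_T$ and $M_i(k) = \abs{\Psi_d^k(x_i)}_T$ for $k \geq 0$. From $\Phi_d^k(x_i) = \Phi_d^{k-1}(x_i)\,\Phi_d^{k-1}(x_{i-1})^{-1}$ one reads off the recursion $L_i(k) \leq L_i(k-1) + L_{i-1}(k-1)$ with $L_1(k) = 1$; an induction on $i$, summing the recursion over $k$, yields $L_i(k) \leq C_i (k+1)^{i-1}$, a polynomial of degree $i-1 \leq d-1$. The analogous relation $\Psi_d^k(x_i) = \Psi_d^{k-1}(x_i)\,\Psi_d^{k-1}(x_{i-1})\cdots\Psi_d^{k-1}(x_1)$ gives $M_i(k) \leq \sum_{j=1}^{i} M_j(k-1)$, and the same inductive scheme shows $M_i(k)$ is likewise bounded by a polynomial of degree $i-1$. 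Hence every factor $\Phi_d^{k_\ell}(x_{i_\ell}^{\pm 1})$ has $T$-length at most $C(r+1)^{d-1}$.

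Combining the two steps, $\abs{h}_T \leq r \cdot C(r+1)^{d-1}$, which is bounded above by a polynomial of degree $d$ in $r$; since $h$ was an arbitrary element of $F_d$ with $\abs{h}_S \leq r$, this gives $Dist^{F_d}_{G_d} \preceq n^d$. I expect the main obstacle to be the bookkeeping at the two places where lengths are tracked: verifying in the rewriting step that both the number of conjugate factors and the exponents $\abs{k_\ell}$ are genuinely controlled by $r$, and carrying the simultaneous induction over all generators cleanly through the recursions for $L_i$ and $M_i$, so that the degree-$(i-1)$ bound propagates without the constants degenerating. The growth estimates themselves are forced by the unipotent action of $\Phi_d$ on the abelianization $\Z^d$, which also explains why degree $d$ is the correct order of magnitude.
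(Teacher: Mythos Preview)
Your proposal is correct and follows essentially the same approach as the paper: the paper invokes Gersten's lemma (which is exactly your ``pushing the stable letter'' rewriting, giving distortion $\preceq n\phi(n)$ where $\phi(n)=\max_{i,\abs{j}\leq n}\abs{\Phi_d^j(x_i)}_T$), and then proves via the same recursions you set up that $\abs{\Phi_d^n(x_i)}_T\leq 2n^{i-1}$ and $\abs{\Psi_d^n(x_i)}_T\leq i\,n^{i-1}$. The only cosmetic difference is that you unfold Gersten's lemma by hand and leave the polynomial constants implicit, whereas the paper cites the lemma and computes explicit constants.
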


The above proposition can be proved by the following three lemmas.

\begin{lem}[Gersten \cite{MR1254309}]
Let $G=F\rtimes_{\Phi} \Z$ be a free by cyclic group, where $F$ is a free group generated by a set $S$ of $d$ elements $x_1, x_2, \cdots, x_d$. For each positive number $n$ we define \[\phi(n)=\max\set{\abs{\Phi^{j}(x_i)}_S}{1\leq i\leq d, \abs{j}\leq n}.\]
Then the distortion of $F$ in $G$ is dominated by the function $n\phi(n)$.
\end{lem}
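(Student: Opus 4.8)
The plan is to exploit the semidirect-product structure $G=F\rtimes_\Phi\Z$ to rewrite any word over a generating set of $G$ representing an element of $F$ as an ordered product of bounded $\Phi$-powers of the generators, and then to bound the $S$-length of each such power by $\phi(n)$. Throughout, let $t$ denote a generator of the $\Z$ factor, so that $txt^{-1}=\Phi(x)$, equivalently $t^j x t^{-j}=\Phi^j(x)$, for every $x\in F$; then $S\cup\{t\}$ is a finite generating set of $G$ while $S$ generates $F$, and I will estimate $\abs{h}_S$ in terms of $\abs{h}_{S\cup\{t\}}$ for $h\in F$.

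First I would fix $h\in F$ with $\abs{h}_{S\cup\{t\}}\leq n$ and write $h=s_1s_2\cdots s_m$ with each $s_k\in S^{\pm1}\cup\{t^{\pm1}\}$ and $m\leq n$. For each $k$ let $\sigma_k$ denote the total $t$-exponent of the prefix $s_1\cdots s_k$, so that $\sigma_0=0$ and $\sigma_k-\sigma_{k-1}\in\{0,\pm1\}$ records whether $s_k$ is a free generator or one of $t^{\pm1}$. Since $h\in F$ and the image of $t$ generates $G/F\cong\Z$, the total $t$-exponent of the word vanishes, i.e. $\sigma_m=0$. I also record the elementary bound $\abs{\sigma_{k-1}}\leq k-1\leq n$ for every $k$.

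The core step is a collection argument: pushing every occurrence of $t^{\pm1}$ to the right using $tx=\Phi(x)t$ and $t^{-1}x=\Phi^{-1}(x)t^{-1}$ for $x\in S^{\pm1}$, a straightforward induction on $m$ yields
\[h=\Bigl(\prod_{k=1}^{m}\Phi^{\sigma_{k-1}}(s_k^F)\Bigr)\,t^{\sigma_m},\]
where $s_k^F=s_k$ when $s_k\in S^{\pm1}$ and $s_k^F=e$ when $s_k\in\{t^{\pm1}\}$, and the product is taken in the order $k=1,\dots,m$. Because $\sigma_m=0$, the trailing factor $t^{\sigma_m}$ is trivial, and $h$ equals exactly this ordered product of conjugated generators.

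Finally I would estimate lengths. At most $m\leq n$ of the factors are nontrivial, and each nontrivial factor is $\Phi^{\sigma_{k-1}}(x_i^{\pm1})$ for some $i$ with $\abs{\sigma_{k-1}}\leq n$; since $S$-length is invariant under inversion and $\phi$ is nondecreasing, every such factor has $S$-length at most $\phi(n)$. Hence $\abs{h}_S\leq m\,\phi(n)\leq n\,\phi(n)$, and taking the maximum over all such $h$ gives $Dist^F_G(n)\leq n\phi(n)$, i.e. $Dist^F_G\preceq n\phi(n)$. The one point that genuinely needs care is the collection identity: the inductive bookkeeping must confirm that the exponent accumulated by conjugation at position $k$ is precisely the prefix $t$-exponent $\sigma_{k-1}$, and that these exponents never exceed $n$ in absolute value. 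That bound is exactly what forces each conjugate into the range controlled by $\phi(n)$ rather than by an uncontrolled $\Phi$-power, so it is the crux of the argument.
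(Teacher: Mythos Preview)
Your argument is correct and is essentially the standard proof of this fact: the collection identity $h=\bigl(\prod_{k}\Phi^{\sigma_{k-1}}(s_k^F)\bigr)t^{\sigma_m}$ follows by the induction you describe, and the bounds $\abs{\sigma_{k-1}}\leq n$ and $m\leq n$ give $\abs{h}_S\leq n\phi(n)$ exactly as you claim. Note that the paper does not supply its own proof of this lemma---it is quoted from Gersten \cite{MR1254309} without argument---so there is no in-paper proof to compare against; your proof is precisely the expected one.
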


We now use the above proposition and the following two lemmas to show that the subgroup distortion of each $F_d$ in $G_d$ is bounded above by a polynomial of degree $d$.

\begin{lem}
Let $F_d$ be a free group generated by a set $S$ of $d$ elements $x_1, x_2, \cdots, x_d$. Let $\Phi_d$ be a free group automorphism of $F_d$ that maps $x_1$ to $x_1$ and $x_i$ to $x_ix^{-1}_{i-1}$ for each $2\leq i\leq d$. Then $\abs{\Phi^n_d(x_i)}_{S}$ is bounded above by $2n^{i-1}$ for each $1\leq i \leq d$ and each positive integer $n$.
\end{lem}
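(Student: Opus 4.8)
The plan is to introduce the shorthand $L_i(n)=\abs{\Phi^n_d(x_i)}_S$ and to extract a recursion that drops either the power or the index, after which a double induction closes everything. The crucial observation is that one should peel off the \emph{innermost} copy of $\Phi_d$: for $2\leq i\leq d$ and $n\geq 1$,
\[\Phi^n_d(x_i)=\Phi^{n-1}_d\bigl(\Phi_d(x_i)\bigr)=\Phi^{n-1}_d(x_i x_{i-1}^{-1})=\Phi^{n-1}_d(x_i)\,\Phi^{n-1}_d(x_{i-1})^{-1},\]
where the last equality uses that $\Phi^{n-1}_d$ is a homomorphism. Since word length is subadditive and invariant under inversion, this yields the fundamental inequality $L_i(n)\leq L_i(n-1)+L_{i-1}(n-1)$, which is the engine of the whole argument.

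Next I would record the base data. Because $x_1$ is fixed by $\Phi_d$, we have $\Phi^n_d(x_1)=x_1$ and hence $L_1(n)=1$ for every $n$. For the lower end of the power, $\Phi_d(x_i)=x_i x_{i-1}^{-1}$ is a reduced word of length two (the generators $x_i$ and $x_{i-1}$ are distinct), so $L_i(1)=2$ for each $2\leq i\leq d$.

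With these in place I would prove $L_i(n)\leq 2n^{i-1}$ by induction on $i$ on the outside and on $n$ on the inside. The outer base case $i=1$ is immediate from $L_1(n)=1\leq 2$. For the inductive step fix $i\geq 2$ and assume the outer hypothesis $L_{i-1}(m)\leq 2m^{i-2}$ for all $m\geq 1$; then run the inner induction on $n$. The inner base case $n=1$ is exactly $L_i(1)=2\leq 2\cdot 1^{i-1}$, and for $n\geq 2$ the recursion together with both the inner and outer hypotheses gives
\[L_i(n)\leq L_i(n-1)+L_{i-1}(n-1)\leq 2(n-1)^{i-1}+2(n-1)^{i-2}.\]
The argument then finishes with the elementary identity and estimate
\[(n-1)^{i-1}+(n-1)^{i-2}=n\,(n-1)^{i-2}\leq n\cdot n^{i-2}=n^{i-1},\]
which holds since $0\leq n-1\leq n$ and $i-2\geq 0$, yielding $L_i(n)\leq 2n^{i-1}$.

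I do not anticipate a genuine obstacle; the whole content is the recursion plus a one-line power inequality. The single place demanding care is the bookkeeping at the boundary $n=1$: one cannot substitute $m=0$ into the outer hypothesis $L_{i-1}(m)\leq 2m^{i-2}$, since for $i>2$ that bound degenerates to $0$ while in fact $L_{i-1}(0)=1$. It is therefore essential to dispatch $n=1$ as a separate inner base case using $L_i(1)=2$ directly, and to invoke the recursion only for $n\geq 2$, where $n-1\geq 1$ and the outer hypothesis applies legitimately.
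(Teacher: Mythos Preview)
Your proposal is correct and follows essentially the same approach as the paper: the same peeling identity $\Phi^n_d(x_i)=\Phi^{n-1}_d(x_i)\,\Phi^{n-1}_d(x_{i-1})^{-1}$, the same double induction (outer on $i$, inner on $n$), the same base cases, and the same closing estimate $(n-1)^{i-1}+(n-1)^{i-2}=n(n-1)^{i-2}\leq n^{i-1}$. Your explicit remark about why $n=1$ must be handled separately (to avoid substituting $m=0$ into the outer hypothesis) is a nice touch that the paper leaves implicit.
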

\begin{proof}
We will prove the above lemma by induction on $i$. For $i=1$ we have $\Phi^n_d(x_1)=x_1$. Therefore, $\abs{\Phi^n_d(x_1)}_{S}$ is exactly 1 and it is obviously bounded above by $2n^{i-1}$ for $i=1$. Assume that $\abs{\Phi^n_d(x_{i-1})}_{S}$ is bounded above by $2n^{i-2}$ for each positive integer $n$. We need to prove that $\abs{\Phi^n_d(x_i)}_{S}$ is bounded above by $2n^{i-1}$ for each positive integer $n$. We now prove this by induction on $n$.

For $n=1$ we have $\Phi_d(x_i)=x_ix^{-1}_{i-1}$. Therefore, $\abs{\Phi_d(x_i)}_{S}$ is exactly 2 and it is obviously bounded above by $2n^{i-1}$ for $n=1$. Assume that $\abs{\Phi^{n-1}_d(x_i)}_{S}$ is bounded above by $2(n-1)^{i-1}$. We have $\Phi^n_d(x_i)=\Phi^{n-1}_d(x_ix^{-1}_{i-1})= \Phi^{n-1}_d(x_i)\big[\Phi^{n-1}_d(x_{i-1})\big]^{-1}$. Therefore, $\abs{\Phi^n_d(x_i)}_{S}$ is bounded above by $2(n-1)^{i-2}+2(n-1)^{i-1}$. Also $2(n-1)^{i-2}+2(n-1)^{i-1}=2(n-1)^{i-2}n\leq 2n^{i-1}$. Thus, $\abs{\Phi^n_d(x_i)}_{S}$ is bounded above by $2n^{i-1}$.
\end{proof}

\begin{lem}
Let $F_d$ be a free group generated by a set $S$ of $d$ elements $x_1, x_2, \cdots, x_d$. Let $\Psi_d$ be a free group automorphism of $F_d$ that maps $x_i$ to $x_ix_{i-1}\cdots x_1$ for each $1\leq i\leq d$. Then $\abs{\Psi^n_d(x_i)}_{S}$ is bounded above by $in^{i-1}$ for each $1\leq i \leq d$ and each positive integer $n$.
\end{lem}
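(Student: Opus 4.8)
The plan is to mimic the double induction used in the preceding lemma for $\Phi_d$, inducting first on the index $i$ and then, inside each such step, on the exponent $n$. The engine of the argument is the factorization obtained by applying $\Psi_d^{n-1}$ to the defining relation $\Psi_d(x_i)=x_ix_{i-1}\cdots x_1$, namely
\[
\Psi_d^n(x_i)=\Psi_d^{n-1}(x_i)\,\Psi_d^{n-1}(x_{i-1})\cdots \Psi_d^{n-1}(x_1),
\]
which immediately yields the length estimate
\[
\abs{\Psi_d^n(x_i)}_S \leq \abs{\Psi_d^{n-1}(x_i)}_S + \sum_{j=1}^{i-1}\abs{\Psi_d^{n-1}(x_j)}_S .
\]

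First I would dispose of the base cases. For $i=1$ one has $\Psi_d^n(x_1)=x_1$, so the length is $1 = 1\cdot n^{0}$, as required. For the induction on $n$ inside a fixed $i$, the case $n=1$ is immediate, since $\Psi_d(x_i)=x_ix_{i-1}\cdots x_1$ has length exactly $i = i\cdot 1^{i-1}$. For the inductive step I would assume the outer hypothesis $\abs{\Psi_d^m(x_j)}_S\leq j\,m^{j-1}$ for every $j<i$ and all $m$, together with the inner hypothesis $\abs{\Psi_d^{n-1}(x_i)}_S\leq i(n-1)^{i-1}$. Feeding these into the displayed length estimate gives
\[
\abs{\Psi_d^n(x_i)}_S \leq i(n-1)^{i-1} + \sum_{j=1}^{i-1} j\,(n-1)^{j-1},
\]
so everything reduces to verifying that the right-hand side is at most $i\,n^{i-1}$.

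This closing numerical inequality is the only genuinely non-routine point, and I expect it to be the main obstacle. I would settle it by comparing the two sides term by term against a binomial expansion. For $n\geq 2$ each factor $(n-1)^{j-1}$ with $j\leq i-1$ is at most $(n-1)^{i-2}$, so the sum is bounded by $\frac{i(i-1)}{2}(n-1)^{i-2}$. On the other side, expanding $n^{i-1}=\bigl((n-1)+1\bigr)^{i-1}$ and keeping the next-to-leading binomial term gives $n^{i-1}\geq (n-1)^{i-1}+(i-1)(n-1)^{i-2}$, whence the gap satisfies
\[
i\,n^{i-1}-i(n-1)^{i-1}\geq i(i-1)(n-1)^{i-2}\geq \tfrac{i(i-1)}{2}(n-1)^{i-2}.
\]
Thus the gap comfortably absorbs the sum, yielding $i(n-1)^{i-1}+\sum_{j=1}^{i-1} j(n-1)^{j-1}\leq i\,n^{i-1}$. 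The case $n=1$ is already handled by the inner base case, so this completes both inductions and establishes the claimed bound $\abs{\Psi_d^n(x_i)}_S\leq i\,n^{i-1}$.
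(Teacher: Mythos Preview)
Your proof is correct and follows essentially the same double-induction strategy as the paper: induct on $i$, and inside each step induct on $n$ via the factorization $\Psi_d^n(x_i)=\Psi_d^{n-1}(x_i)\Psi_d^{n-1}(x_{i-1})\cdots\Psi_d^{n-1}(x_1)$, reducing to the inequality $\sum_{j=1}^{i} j(n-1)^{j-1}\leq i\,n^{i-1}$. The only difference is that the paper simply asserts this closing numerical inequality, whereas you supply an explicit verification via the binomial expansion of $n^{i-1}=\bigl((n-1)+1\bigr)^{i-1}$.
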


\begin{proof}
We will prove the above lemma by induction on $i$. For $i=1$ we have $\Psi^n_d(x_1)=x_1$. Therefore, $\abs{\Psi^n_d(x_1)}_{S}$ is exactly 1 and it is obviously bounded above by $in^{i-1}$ for $i=1$. Assume that each $\abs{\Psi^{n}_d(x_k)}_{S}$ is bounded above by $kn^{k-1}$ for each $k\leq i-1$ and each positive integer $n$. We need to prove that $\abs{\Psi^n_d(x_i)}_{S}$ is bounded above by $in^{i-1}$ for each positive integer $n$. We now prove this by induction on $n$.

For $n=1$ we have $\Psi_d(x_i)=x_ix_{i-1}\cdots x_1$. Therefore, $\abs{\Psi_d(x_i)}_{S}$ is exactly $i$ and it is obviously bounded above by $i n^{i-1}$ for $n=1$. Assume that $\abs{\Psi^{n-1}_d(x_i)}_{S}$ is bounded above by $i(n-1)^{i-1}$. We have $\Psi^n_d(x_i)=\Psi^{n-1}_d(x_ix_{i-1}\cdots x_1)= \Psi^{n-1}_d(x_i)\Psi^{n-1}_d(x_{i-1})\cdots \Psi^{n-1}_d(x_1)$. Also, each $\abs{\Psi^{n-1}_d(x_k)}_{S}$ is bounded above by $k(n-1)^{k-1}$ for each $k\leq i$. Therefore, $\abs{\Psi^n_d(x_i)}_{S}$ is bounded above by $1(n-1)^0+2(n-1)^1+3(n-1)^2+\cdots +i(n-1)^{i-1}$. Also 

\[1(n-1)^0+2(n-1)^1+3(n-1)^2+\cdots +i(n-1)^{i-1}\leq i n^{i-1}.\] 

Thus, $\abs{\Psi^n_d(x_i)}_{S}$ is bounded above by $in^{i-1}$.
\end{proof}

\begin{rem}
\label{remark1}
 Theorem \ref{th1} and Proposition \ref{p2} give an alternative proof of the upper bound for divergence of $G_d$ in Theorem \ref{th2}.
\end{rem}

\begin{thm}
\label{th3}
For each integer $d\geq 2$ let $G_d$ and $F_d$ be groups defined as above. Then the subgroup distortion of each $F_d$ in $G_d$ is a polynomial of degree $d$.
\end{thm}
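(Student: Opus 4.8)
The plan is to combine the upper bound already in hand with a matching lower bound coming from the divergence of $G_d$. Proposition \ref{p2} furnishes the upper bound: the subgroup distortion $Dist_{G_d}^{F_d}$ is dominated by a polynomial of degree $d$. It therefore remains only to produce a lower bound of the same order, and for this the natural tool is Theorem \ref{th1}, which says that the divergence of a group dominates the distortion of a suitable normal subgroup.

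First I would verify that $F_d$ satisfies all the hypotheses of Theorem \ref{th1}. The subgroup $F_d$ is finitely generated, being free of rank $d$; it is infinite, since a nontrivial free group is infinite; it is normal in $G_d$, since by the discussion preceding the theorem $G_d = F_d \rtimes_{\Phi_d} \Z$; and it has infinite index, since the quotient $G_d/F_d \cong \Z$ is infinite. With these checks in place, Theorem \ref{th1} applies and gives
\[
Div(G_d) \preceq Dist_{G_d}^{F_d}.
\]
By Theorem \ref{th2}, the divergence $Div(G_d)$ is a polynomial of degree $d$, so $Dist_{G_d}^{F_d}$ dominates a polynomial of degree $d$. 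Combining this lower bound with the upper bound from Proposition \ref{p2}, we conclude that $Dist_{G_d}^{F_d}$ is equivalent to a polynomial of degree $d$, as claimed.

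There is essentially no serious obstacle here, since both the upper and the lower estimates are supplied by results established earlier in the paper; the entire content of the argument is the routine verification that $F_d$ is an infinite, infinite-index, finitely generated normal subgroup, which is exactly what is needed to invoke Theorem \ref{th1}. The only point worth stating carefully is that the two bounds match in \emph{degree}, so that the equivalence (rather than a mere inequality) of the distortion with a degree-$d$ polynomial follows.
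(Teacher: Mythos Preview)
Your argument is correct and follows exactly the paper's own proof, which simply cites Theorem~\ref{th1}, Theorem~\ref{th2}, and Proposition~\ref{p2}. One small wording slip: you describe Theorem~\ref{th1} as saying ``the divergence of a group dominates the distortion,'' but it says the reverse (divergence is \emph{dominated by} distortion); your displayed inequality $Div(G_d)\preceq Dist_{G_d}^{F_d}$ and the subsequent reasoning are nonetheless correct.
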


The proof of the above theorem is given by Theorem \ref{th1} (or Theorem 4.1 in \cite{MR1254309}), Theorem \ref{th2}, and Proposition \ref{p2}. 

\section{On distortion of normal subgroups in right-angled Artin groups}
\label{GBBS}
In this section, we study the distortion of normal subgroups in right-angled Artin groups. Most of our computations are based on Theorem \ref{th1} and Theorem \ref{theo1}.

\begin{defn}
Let $\Gamma_1$ and $\Gamma_2$ be two graphs, the \emph{join} of $\Gamma_1$ and $\Gamma_2$ is a graph obtained by connecting every vertex of $\Gamma_1$ to every vertex of $\Gamma_2$ by an edge. A graph $\Gamma$ is \emph{join} if it can be decomposed as the join of its two subgraphs.
\end{defn}

\begin{defn}
Let $\Gamma$ be a graph with the vertex set $S$. 
\begin{enumerate}
\item A subgraph $\Gamma_1$ of $\Gamma$ is \emph{dominating} if for every vertex $v$ in $\Gamma-\Gamma_1$, $d(v, \Gamma_1)=1$. 
\item A subgraph $\Gamma_1$ is \emph{strongly dominating} if there is a vertex $u$ in $\Gamma_1$ that is adjacent to all vertices in $\Gamma-\Gamma_1$.
\item A subgraph $\Gamma_1$ is \emph{specially dominating} if there is a vertex $u$ in $\Gamma_1$ that is adjacent to all vertices in $\Gamma-\{u\}$. \item Let $S_1$ be a subset of the vertex set $S$ of $\Gamma$. The \emph{induced subgraph $\Gamma_1$} with vertex set $S_1$ is the subgraph with vertex set $S_1$ and all edges in $\Gamma$ with both endpoints in $S_1$.
\item Let $\mathcal{A}$ be a collection of subgraphs of $\Gamma$. A subgraph $\Gamma_1$ is \emph{generated by} $\mathcal{A}$ if it is the induced subgraph with the vertex set $S_1$ which is the union of vertex sets of all subgraphs in $\mathcal{A}$.
\item Two subgraphs $\Gamma_1$ and $\Gamma_2$ commutes if each vertex in $\Gamma_1$ is connected to all vertices in $\Gamma_2$ and vice versa.
\end{enumerate}
\end{defn}
\begin{figure}
\begin{tikzpicture}
\draw (-5,1) node[circle,fill,inner sep=2pt, color=red](a){} -- (-4,1) node[circle,fill,inner sep=2pt, color=red](a){} -- (-3,1) node[circle,fill,inner sep=2pt, color=red](a){};

\draw (-5, 1) -- (-5, 0); \draw (-5,1) -- (-4,0); \draw (-5,1) -- (-3,0); \draw (-4,1)-- (-3,0); \draw (-3,1) -- (-3,0);

\draw (-5,0) node[circle,fill,inner sep=2pt, color=blue](a){} -- (-4,0) node[circle,fill,inner sep=2pt, color=blue](a){} -- (-3,0) node[circle,fill,inner sep=2pt, color=blue](a){};

\draw (-1,1) node[circle,fill,inner sep=2pt, color=red](a){} -- (0,1) node[circle,fill,inner sep=2pt, color=red](a){} -- (1,1) node[circle,fill,inner sep=2pt, color=red](a){}; 

\draw (-1,1) -- (-1,0); \draw (-1,1) -- (0,0); \draw (0,1) -- (0,0); \draw (0,1) -- (1,0); \draw (1,1) -- (1,0);

\draw (-1,0) node[circle,fill,inner sep=2pt, color=blue](a){} -- (0,0) node[circle,fill,inner sep=2pt, color=blue](a){} -- (1,0) node[circle,fill,inner sep=2pt, color=blue](a){};

\draw (3,1) node[circle,fill,inner sep=2pt, color=red](a){} -- (4,1) node[circle,fill,inner sep=2pt, color=red](a){} -- (5,1) node[circle,fill,inner sep=2pt, color=red](a){}; 

\draw (3,1) -- (3,0); \draw (5,1) -- (5,0);

\draw (3,0) node[circle,fill,inner sep=2pt, color=blue](a){} -- (4,0) node[circle,fill,inner sep=2pt, color=blue](a){} -- (5,0) node[circle,fill,inner sep=2pt, color=blue](a){}; 

\end{tikzpicture}
\caption{Illustration of some right-angled Artin groups with non-join defining graphs and their generalised Bestvina-Brady subgroups.}
\label{fig1}
\end{figure}

\begin{defn}
Let $\Gamma$ be a simplicial graph and $A_{\Gamma}$ the associated right-angled Artin group. Let $p$ be a group epimorphism from $A_{\Gamma}$ to $\Z$. The \emph{living subgraph} $\mathcal{L}(p)$ of $\Gamma$ with respect to $p$ is the induced subgraph of $\Gamma$ with the vertices corresponding to generators which are not mapped to 0 by $p$.
\end{defn}

The following definition is a generalization of Bestvina-Brady subgroups in \cite{MR1465330}. 
\begin{defn}[Generalized Bestvina-Brady subgroups]
\label{d1}
Let $\Gamma$ be a finite simplicial graph with the vertex set $S$. Let $d$ be a positive integer that is less than or equal the number of elements of $S$. Label each vertex of $\Gamma$ with an integer in $\{1,2,\dots d\}$. We construct an \emph{epimorphism} $\Phi$ from $A_{\Gamma}$ to $\Z^d$ by sending each vertex with label $i$ to $e_i=(0,0,\cdots, 1^{i th}, \cdots, 0)$. The subgroup $H_{\Phi}=Ker (\Phi)$ is a \emph{generalized Bestvina-Brady subgroup} of $A_{\Gamma}$ with respect to $\Phi$.
\end{defn}
 
\begin{rem}
The subgroup $H_{\Phi}$ is the commutator subgroup if $d=\abs{S}$ and $H_{\Phi}$ is the Bestvina-Brady subgroup (see \cite{MR1465330}) if $d=1$.
\end{rem}

For each $1\leq i \leq d$ let $S_i$ be the set of vertices of $\Gamma$ mapped to $e_i$ by $\Phi$. Let $\Gamma_i$ be the induced subgraph of $\Gamma$ with vertex set $S_i$ (i.e.~$\Gamma_i$ is the union of all edges of $\Gamma$ with both endpoints in $S_i$). We call $\Gamma_i$ \emph{basis subgraphs} of $\Gamma$ with respect to $\Phi$.

\begin{thm}
Let $\Gamma$ be a finite simplicial graph. Let $H_\Phi$ be a generalized Bestvina-Brady subgroup of $A_{\Gamma}$. Then $H_\Phi$ is finitely generated iff each basis subgraph of $\Gamma$ with respect to $\Phi$ is connected and dominating. 
\end{thm}
\begin{proof}
Assume that the group $H_\Phi$ is finitely generated. For each $i$ in $\{1,2,\cdots d\}$ let $p_i$ be a group homomorphism from $\Z^d$ to $\Z$ that maps $e_i$ to 1 and each $e_j$ to 0 for $j\neq i$. It is obvious that the map $p_i\circ \Phi$ is a a group epimorphism from $A_{\Gamma}$ to $\Z$ and each basis subgraph $\Gamma_i$ is the living graph $\mathcal{L}(p_i\circ \Phi)$ of the group epimorphism $p_i\circ \Phi$. Therefore, each subgraph is connected and dominating by Proposition 6.2 in \cite{MR1337468}. The proof of the opposite direction can be deduced from the proof of Corollary 6.6 in the same article \cite{MR1337468}.
\end{proof}

\begin{figure}
\begin{tikzpicture}

\draw (-5,1) node[circle,fill,inner sep=2pt, color=red](a){} -- (-4,1) node[circle,fill,inner sep=2pt, color=red](a){};

\draw (-5,1) -- (-6,0); \draw (-5,1) -- (-5,0); \draw (-5,1) -- (-4,0); \draw (-5,1) -- (-3,0); \draw (-4,1) -- (-3,0);

\draw (-6,0) node[circle,fill,inner sep=2pt, color=blue](a){} -- (-5,0) node[circle,fill,inner sep=2pt, color=blue](a){} -- (-4,0) node[circle,fill,inner sep=2pt, color=blue](a){} -- (-3,0) node[circle,fill,inner sep=2pt, color=blue](a){};

\draw (-0.5,1) node[circle,fill,inner sep=2pt, color=red](a){} -- (0.5,1) node[circle,fill,inner sep=2pt, color=red](a){};

\draw (-0.5,1) -- (-1,0); \draw (0.5,1) -- (-1,0); \draw (0.5,1) -- (0,0); \draw (0.5,1) -- (1,0); 

\draw (-1,0) node[circle,fill,inner sep=2pt, color=blue](a){} -- (0,0) node[circle,fill,inner sep=2pt, color=blue](a){} -- (1,0) node[circle,fill,inner sep=2pt, color=blue](a){};

\draw (3.5,1) node[circle,fill,inner sep=2pt, color=red](a){} -- (4.5,1) node[circle,fill,inner sep=2pt, color=red](a){};

\draw (3.5,1) -- (3,0); \draw (3.5,1) -- (4,0); \draw (3.5,1) -- (5,0); \draw (4.5,1) -- (4,0); \draw (4.5,1) --(5,0);

\draw (3,0) node[circle,fill,inner sep=2pt, color=blue](a){} -- (4,0) node[circle,fill,inner sep=2pt, color=blue](a){} -- (5,0) node[circle,fill,inner sep=2pt, color=blue](a){};

\end{tikzpicture}
\caption{Illustration of some right-angled Artin groups with join defining graphs and their generalised Bestvina-Brady subgroups.}
\label{fig2}
\end{figure}

\begin{rem}
The above theorem gives us a necessary and sufficient condition for a generalized Bestvina-Brady subgroup to be finitely generated. However, finding a finite generating set of a generalized Bestvina-Brady subgroup and computing its distortion in the right-angled Artin group is quite difficult. Therefore, we only focus on a subcollection of generalised Bestvina-Brady subgroups. 
\end{rem}

\begin{defn}
Let $\Gamma$ be a simplicial graph and $A_{\Gamma}$ the associated right-angled Artin group. A generalized Bestvina-Brady subgroup $H_\Phi$ is \emph{strong} if each basis subgraph of $\Gamma$ with respect to $\Phi$ is connected and strongly dominating. A generalized Bestvina-Brady subgroup $H_\Phi$ \emph{special} if each basis subgraph of $\Gamma$ with respect to $\Phi$ is connected and specially dominating.
\end{defn}

\begin{rem}
A special generalized Bestvina-Brady subgroup is obviously strong. The collection of strong generalized Bestvina-Brady subgroups of a right-angled Artin group includes its real Bestvina-Brady subgroup. If a right-angled Artin group $A_{\Gamma}$ contains a special generalized Bestvina-Brady subgroup, then $\Gamma$ must be a join. 
\end{rem}

\begin{exmp}
Let $\Gamma$ be one of the graphs in Figure \ref{fig1} or Figure \ref{fig2}. Let $\Phi$ be a group homomorphism from $A_\Gamma$ to $\Z^2$ that maps each red (top) vertex to $e_1=(1,0)$ and each blue (bottom) vertex to $e_2=(0,1)$. Let $H_\Phi$ be the generalised Bestvina-Brady subgroup with respect to $\Phi$. Then all $H_\Phi$ are finitely generated except for the case $\Gamma$ is the graph on the right of Figure \ref{fig1}. If $\Gamma$ is one of the graphs on Figure \ref{fig2} or the graph on the left of Figure \ref{fig1}, then the generalised Bestvina-Brady subgroup $H_\Phi$ is strong. If $\Gamma$ is the graph in the middle of Figure \ref{fig1}, then the generalised Bestvina-Brady subgroup $H_\Phi$ is not strong. If $\Gamma$ is the graph on the right of Figure \ref{fig2}, then the generalised Bestvina-Brady subgroup $H_\Phi$ is special.
\end{exmp}

We recall that each $S_i$ is the set of vertices of $\Gamma$ that are mapped to $e_i$ via $\Phi$ and each basis subgraph $\Gamma_i$ is the induced subgraph of $\Gamma$ with vertex set $S_i$. Let $T_i$ be the set of group elements of the form $uv^{-1}$, where $u$ and $v$ are two adjacent vertices in $\Gamma_i$ and $T$ the union of all sets $T_i$. Let $H_{T}$ be a subgroup of $A_\Gamma$ generated by $T$. For each $i$ in $\{1,2,\cdots,d\}$ let $p_i$ be a group homomorphism from $\Z^d$ to $\Z$ that maps $e_i$ to 1 and each $e_j$ to 0 for $j\neq i$. Let $K=\diam{\Gamma}+2$. Before we compute a upper bound of the distortion of a strong generalized Bestvina-Brady subgroup $H_\Phi$ in $A_{\Gamma}$, we need to prove several lemmas. For the following two lemmas, we assume that each basis subgraph $\Gamma_i$ is connected and strongly dominating. 

\begin{lem}
\label{lem1}
Let $n$ be an arbitrary integer and $s_1$, $s_2$ two elements in some $S_i$. Then the group elements $s_1^ns_2s_1^{-n-1}$ and $s_1^ns^{-1}_2s_1^{-n+1}$ both belong to $H_{T}$ and their length with respect to $T$ are both bounded above by $2K\bigl(\abs{n}+1\bigr)$.
\end{lem}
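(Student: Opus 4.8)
The plan is to write down explicit words in $T$ representing the two elements and then simply count their syllables. First I would record why they belong to $\ker\Phi$: since $s_1,s_2\in S_i$ both map to $e_i$, we have $\Phi(s_1^ns_2s_1^{-n-1})=(n+1)e_i-(n+1)e_i=0$ and $\Phi(s_1^ns_2^{-1}s_1^{-n+1})=0$, so both elements are at least candidates for membership in $H_\Phi\supseteq H_T$. It is convenient to observe the relation $s_1^ns_2^{-1}s_1^{-n+1}=\bigl(s_1^{n-1}s_2s_1^{-n}\bigr)^{-1}$, so the companion element is the inverse of a first-type element; since inversion preserves $T$-length, once I control $g_n:=s_1^ns_2s_1^{-n-1}$ the second element follows at once, and the heart of the matter is $g_n$.

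To handle $g_n$, I would invoke connectedness of $\Gamma_i$ to fix a geodesic $s_1=v_0,v_1,\dots,v_k=s_2$ inside $\Gamma_i$, whose length $k$ is bounded by the diameter of $\Gamma_i$, the quantity that $K$ is chosen to dominate. Consecutive vertices are adjacent and hence commute in $A_\Gamma$, and each edge generator $a_l:=v_{l-1}v_l^{-1}$ lies in $T_i\subseteq T$. The key is then the closed form
\[ s_1^ns_2s_1^{-n-1}=a_1^n a_2^n\cdots a_{k-1}^n\, a_k^{-1}\, a_{k-1}^{-n-1}\cdots a_1^{-n-1}, \]
which displays $g_n$ as an explicit word in $T$ and so establishes membership in $H_T$ and the length estimate simultaneously.

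I would prove this identity by induction on the path length $k$. The base case $k=1$ is immediate, since $v_0$ and $v_1$ commute and $v_0^nv_1v_0^{-n-1}=v_1v_0^{-1}=a_1^{-1}$. For the inductive step the crucial observation is that $a_1=v_0v_1^{-1}$ commutes with $v_1$ (indeed $a_1v_1=v_1a_1=v_0$, using $v_0v_1=v_1v_0$), whence $v_0^m=a_1^mv_1^m$ for every integer $m$. Substituting yields
\[ v_0^nv_kv_0^{-n-1}=a_1^n\bigl(v_1^nv_kv_1^{-n-1}\bigr)a_1^{-n-1}, \]
and the bracketed factor is an element of exactly the same shape for the shorter path $v_1,\dots,v_k$, whose edge generators are $a_2,\dots,a_k$; the inductive hypothesis and concatenation then give the displayed formula. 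The conceptual point is that this \emph{peeling} trades each conjugation by the single vertex $v_0$ for a conjugation by $v_1$ one step further along the path, at the cost of just one syllable $a_1^{\pm}$ on either side — this is precisely what keeps the resulting word linear in $n$ rather than growing quadratically, as one would naively fear from conjugating by $s_1^n$.

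Finally I would count: the closed form has $k-1$ left-hand syllables of exponent $\abs{n}$, one syllable $a_k^{-1}$, and $k-1$ right-hand syllables of exponent $\abs{n}+1$, so $\abs{g_n}_T\le 2(k-1)(\abs{n}+1)+1\le 2K(\abs{n}+1)$ once $k\le K$, with the degenerate cases $k\in\{0,1\}$ (namely $s_1=s_2$ and $s_1\sim s_2$) checked directly. Running the same path-and-peeling argument on $s_1^ns_2^{-1}s_1^{-n+1}$ (or applying the inversion identity above) gives the analogous word and the same linear bound. The step I expect to be the main obstacle is the inductive identity itself: one must verify carefully that the only relations used are the commutations of path-adjacent vertices, with no hidden reorderings, since this is exactly where linearity in $n$ is secured. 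I note that the argument uses only connectedness of $\Gamma_i$ together with the defining relations of $A_\Gamma$; the strong-domination hypothesis plays no role in this particular lemma and is presumably reserved for the later identification of $H_T$ with $H_\Phi$ and for the global distortion estimate.
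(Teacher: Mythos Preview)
Your argument is correct. Your inductive ``peeling'' identity
\[
v_0^n v_k v_0^{-n-1}=a_1^n\bigl(v_1^n v_k v_1^{-n-1}\bigr)a_1^{-n-1}
\]
is valid because $a_1=v_0v_1^{-1}$ commutes with $v_1$, and the resulting closed form and length count are fine. The inversion trick for the second element is also correct.

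The paper takes a slightly different, non-inductive route: it writes
\[
s_1^ns_2s_1^{-n-1}=(s_1^ns_2^{-n})(s_2^{n+1}s_1^{-n-1})
\]
and then telescopes each factor along the same path $s_1=u_1,\dots,u_\ell=s_2$, using
\[
s_1^ns_2^{-n}=(u_1u_2^{-1})^n(u_2u_3^{-1})^n\cdots(u_{\ell-1}u_\ell^{-1})^n,
\]
which is immediate once consecutive $u_j$ commute. This gives $\abs{s_1^ns_2^{-n}}_T\le K\abs{n}$ and $\abs{s_2^{n+1}s_1^{-n-1}}_T\le K(\abs{n}+1)$ directly, with no induction. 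Amusingly, after cancelling the middle $b_{\ell-1}^n b_{\ell-1}^{-n-1}=b_{\ell-1}^{-1}$, the paper's word is literally your word; the two arguments are different derivations of the same identity. The paper's two-factor split is a bit shorter and generalises cleanly to $s_1^ms_2^{-m}$ for arbitrary $m$ (which is what Lemma~\ref{lem2} needs), while your recursive formulation makes the role of the path length transparent and would adapt well if one wanted to track finer dependence on the path. Your closing remark that only connectedness of $\Gamma_i$ is used here, not strong domination, is also accurate.
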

\begin{proof}
We can write $s_1^ns_2s_1^{-n-1}=(s_1^ns_2^{-n})(s_2^{n+1}s_1^{-n-1})$. Since $\Gamma_i$ is connected, there is a sequence $s_1=u_1, u_2, \cdots, u_{\ell}=s_2$ of elements in $S_i$, where $\ell \leq \diam{\Gamma} +1$ and $u_j$, $u_{j+1}$ commutes for each $i$ in $\{1, 2,\cdots, \ell -1\}$. Therefore, \[s_1^ns_2^{-n}=(u_1^nu_2^{-n})(u_2^nu_3^{-n})\cdots (u_{\ell-1}^nu_\ell^{-n})=(u_1u_2^{-1})^n(u_2u_3^{-1})^n\cdots (u_{\ell-1}u_\ell^{-1})^n.\]
This implies that the element $s_1^ns_2^{-n}$ belongs to $H_{T}$ and $\abs{s_1^ns_2^{-n}}_T\leq \ell \abs{n}\leq K\abs{n}$. Similarly, the element $s_2^{n+1}s_1^{-n-1}$ also belongs to $H_{T}$ and $\abs{s_2^{n+1}s_1^{-n-1}}_T\leq K\bigl(\abs{n}+1\bigr)$. Therefore, the group element $s_1^ns_2s_1^{-n-1}$ belongs to $H_{T}$ and $\abs{s_1^ns_2s_1^{-n-1}}_T\leq 2K\bigl(\abs{n}+1\bigr)$. Similarly, we can prove $s_1^ns^{-1}_2s_1^{-n+1}$ belongs to $H_{T}$ and $\abs{s_1^ns^{-1}_2s_1^{-n+1}}_T\leq 2K\bigl(\abs{n}+1\bigr)$
\end{proof}

\begin{lem}
\label{lem2}
Let $w$ be a word in $S_i$ and $s$ a generator in $S_i$ for some $i$. Let $m$ be an arbitrary integer and $n=\bigl(p_i\circ\Phi\bigr)(w)$. Then the word $s^m w s^{-m-n}$ represents an element in $H_{T}$ and $\abs{s^m w s^{-m-n}}_T\leq 2K\bigl(\abs{m}\ell(w)+\ell^2(w)\bigr)$. If $s$ commutes with all generators in the word $w$, then $\abs{s^m w s^{-m-n}}_T\leq \ell(w)$. 
\end{lem}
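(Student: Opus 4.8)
The plan is to reduce everything to Lemma \ref{lem1} by a telescoping decomposition of the word $s^m w s^{-m-n}$. First I would write $w$ as a product of single letters $w = t_1 t_2 \cdots t_k$, where each $t_j = s_j^{\epsilon_j}$ with $s_j \in S_i$, $\epsilon_j \in \{+1,-1\}$, and $k = \ell(w)$. Setting $n_0 = 0$ and $n_j = \epsilon_1 + \cdots + \epsilon_j$ for the partial exponent sums, I note that $n_k = \bigl(p_i \circ \Phi\bigr)(w) = n$, since every generator in $S_i$ is sent to $1$ by $p_i \circ \Phi$.

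The key algebraic identity I would then verify is the telescoping factorization
\[ s^m w s^{-m-n} = \prod_{j=1}^{k} s^{\,m + n_{j-1}} \, t_j \, s^{\,-m - n_j}, \]
in which all the interior powers of $s$ cancel in consecutive pairs. For each factor I would match $t_j$ against one of the two elements in Lemma \ref{lem1}: if $\epsilon_j = +1$ then $n_j = n_{j-1} + 1$ and the $j$-th factor equals $s^N s_j s^{-N-1}$ with $N = m + n_{j-1}$, while if $\epsilon_j = -1$ then $n_j = n_{j-1} - 1$ and it equals $s^N s_j^{-1} s^{-N+1}$. In both cases Lemma \ref{lem1} (applied with $s_1 = s$ and $s_2 = s_j$, both in $S_i$) places the factor in $H_{T}$ with $T$-length at most $2K\bigl(\abs{m + n_{j-1}} + 1\bigr)$, so the whole product lies in $H_{T}$.

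For the length bound I would estimate the resulting sum using $\abs{n_{j-1}} \le j - 1$, which gives $\sum_{j=1}^k \bigl(\abs{m + n_{j-1}} + 1\bigr) \le \sum_{j=1}^k \bigl(\abs{m} + j\bigr) = \abs{m}\ell(w) + \tfrac{\ell(w)(\ell(w)+1)}{2}$; since $\tfrac{\ell(w)(\ell(w)+1)}{2} \le \ell(w)^2$ for $\ell(w) \ge 1$, multiplying by $2K$ yields the claimed bound $2K\bigl(\abs{m}\ell(w) + \ell^2(w)\bigr)$. This constant-chasing is the only delicate point, and it works out precisely because the triangular sum of the partial-sum sizes is absorbed by $\ell^2(w)$.

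Finally, for the commuting case I would revisit the same factorization: when $s$ commutes with every generator occurring in $w$, each factor $s^{\,m+n_{j-1}} t_j s^{\,-m-n_j}$ collapses. If $s = s_j$ the factor reduces to a trivial power of $s$; otherwise $s$ and $s_j$ are adjacent vertices of $\Gamma_i$, the powers of $s$ commute past $s_j^{\epsilon_j}$ and recombine into $s^{\pm 1}$, leaving a factor equal to $s_j s^{-1}$ or its inverse, which is an element of $T \cup T^{-1}$ of $T$-length one. Hence the product has $T$-length at most $\ell(w)$. I expect the only (modest) obstacle to be the bookkeeping of how the partial sums $n_j$ interact with the cancellations, which is exactly what the telescoping identity is designed to control.
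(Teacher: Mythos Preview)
Your proof is correct and follows essentially the same approach as the paper's: the same telescoping factorization of $s^m w s^{-m-n}$ into single-letter pieces, the same invocation of Lemma~\ref{lem1} on each factor, and the same summation of the resulting bounds. The only cosmetic difference is in the commuting case, where the paper rewrites globally as $s^m w s^{-m-n} \equiv_{A_\Gamma} (s_1 s^{-1})^{n_1}\cdots(s_\ell s^{-1})^{n_\ell}$ rather than collapsing factor-by-factor, but this is a stylistic variation yielding the same conclusion.
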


\begin{proof}
Assume $w=s_1^{n_1}s_2^{n_2}\cdots s_\ell^{n_\ell}$, where $\ell$ is the length of $w$, each $s_j$ is element in $S_i$, $n_j=1 \text{ or } -1$, and $n_1+n_2+\cdots+n_\ell=n$. We can write

\[s^m w s^{-m-n} =(s^m s_1^{n_1} s^{-m-n_1})(s^{m+n_1} s_2^{n_2} s^{-m-n_1-n_2})\cdots(s^{m+n_1+n_2+\cdots+n_{\ell -1}} s_\ell^{n_\ell} s^{-m-n_1-n_2-\cdots-n_{\ell}}).\]

By Lemma \ref{lem1}, the word $s^m w s^{-m-n}$ represents an element in $H_{T}$ and 
\begin{align*}\abs{s^m w s^{-m-n}}_T &\leq 2K\bigl(\abs{m}+1\bigr)+2K\bigl(\abs{m+n_1}+1\bigr)+\cdots +2K\bigl(\abs{m+n_1+n_2+\cdots+n_{\ell -1}}+1\bigr)\\&\leq 2K\ell\bigl(\abs{m}+\ell\bigr)\leq 2K\ell(w)\bigl(\abs{m}+\ell(w)\bigr)\leq 2K\bigl(\abs{m}\ell(w)+\ell^2(w)\bigr).\end{align*}

If $s$ commutes with all generators in the word $w$, then 
\begin{align*}
s^m w s^{-m-n}&\equiv_{A_{\Gamma}}s_1^{n_1}s_2^{n_2}\cdots s_\ell^{n_\ell} s^{-n}\\&\equiv_{A_{\Gamma}} (s_1s^{-1})^{n_1}(s_2s^{-1})^{n_2}\cdots (s_\ell s^{-1})^{n_\ell}.
\end{align*}

Therefore, $\abs{s^m w s^{-m-n}}_T\leq \ell(w)$.
\end{proof}

\begin{prop}
\label{pp}
Let $\Gamma$ be a finite simplicial graph. Let $H_\Phi$ be a generalized Bestvina-Brady subgroup of $A_{\Gamma}$. If the generalized Bestvina-Brady subgroup $H_\Phi$ is strong, then the distortion of $H_\Phi$ in $A_{\Gamma}$ is at most quadratic. Moreover, if the generalized Bestvina-Brady subgroup $H_\Phi$ is special, then the distortion of $H_\Phi$ in $A_{\Gamma}$ is linear.
\end{prop}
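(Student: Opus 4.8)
The plan is to take the finite (symmetric) set $T$ as the generating set of $H_\Phi$ and to bound $\abs{h}_T$ in terms of $\abs{h}_S$ for $h\in H_\Phi$, where $S$ is the vertex set of $\Gamma$; since distortion is independent of the chosen generating sets, this suffices. The fact that $T$ actually generates $H_\Phi$ (so that $H_T=H_\Phi$) will fall out of the rewriting below, since we already have $T\subseteq H_\Phi$. For each $i$ fix the strongly dominating vertex $t_i\in S_i$ adjacent to every vertex of $\Gamma-\Gamma_i$. The crucial observation is that the $t_i$ pairwise commute, and that $t_k$ commutes with every generator of $S_i$ for $i\neq k$, because such a generator lies outside $\Gamma_k$ and hence is adjacent to $t_k$.

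The heart of the argument is a telescoping rewriting. Given $h\in H_\Phi$, write $h=y_1y_2\cdots y_n$ with $y_j=s_j^{\epsilon_j}$, $s_j\in S_{i(j)}$, $\epsilon_j=\pm1$, and $n=\abs{h}_S$. For each prefix let $\phi^{(k)}_j$ be the exponent sum of the $S_k$-letters among $y_1,\dots,y_j$, and set the height element $g_j=t_1^{\phi^{(1)}_j}\cdots t_d^{\phi^{(d)}_j}$. Since $h$ lies in $H_\Phi=\ker\Phi$ we have $\phi^{(k)}_0=\phi^{(k)}_n=0$ for all $k$, so $g_0=g_n=e$, and inserting $g_j^{-1}g_j$ between consecutive letters telescopes to $h=f_1f_2\cdots f_n$ with $f_j=g_{j-1}\,y_j\,g_j^{-1}$.

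I would then show each $f_j$ lies in $H_T$ with controlled length. Writing $i=i(j)$ and $a_i=\phi^{(i)}_{j-1}$, the element $g_j$ differs from $g_{j-1}$ only in the exponent of $t_i$, which increases by $\epsilon_j$. Because all $t_k$ commute and each $t_k$ with $k\neq i$ commutes with $s_j\in S_i$, every off-diagonal factor $t_k^{\pm}$ cancels, collapsing $f_j$ to $t_i^{\,a_i}\,s_j^{\epsilon_j}\,t_i^{-a_i-\epsilon_j}$. This is exactly the expression governed by Lemma \ref{lem2} with $s=t_i$ and $w=s_j^{\epsilon_j}$, a length-one word in $S_i$ with $(p_i\circ\Phi)(w)=\epsilon_j$. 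Hence $f_j\in H_T$ and $\abs{f_j}_T\leq 2K\bigl(\abs{a_i}+1\bigr)\leq 2K(n+1)$, using $\abs{a_i}\leq j-1\leq n$. Summing over the $n$ factors yields $\abs{h}_T\leq 2Kn(n+1)$, which is quadratic, establishing the first assertion (and, via $h=f_1\cdots f_n$, that $T$ generates $H_\Phi$). The genuinely nontrivial input here is the cancellation of the off-diagonal $t_k$'s together with Lemma \ref{lem1}, whose telescoping through a path in the connected graph $\Gamma_i$ is what makes $t_i^{\,a_i}s_j^{\epsilon_j}t_i^{-a_i-\epsilon_j}$ lie in $H_T$ even when $t_i$ and $s_j$ are not adjacent; that lemma is already proved, so the remaining obstacle is purely the bookkeeping of commutations.

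For the special case the same construction applies, but now each $t_i$ is adjacent to every vertex of $\Gamma-\{t_i\}$, hence commutes with every generator and is central. The collapse of $f_j$ then goes one step further: $t_i^{\,a_i}$ passes through $s_j^{\epsilon_j}$ and combines with $t_i^{-a_i-\epsilon_j}$, leaving $f_j=s_j^{\epsilon_j}t_i^{-\epsilon_j}=(s_jt_i^{-1})^{\epsilon_j}$, a single element of $T$ (or the identity when $s_j=t_i$). This is precisely the commuting case of Lemma \ref{lem2}, giving $\abs{f_j}_T\leq 1$ and therefore $\abs{h}_T\leq n$, so the distortion is linear.
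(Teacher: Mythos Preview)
Your proof is correct and follows essentially the same approach as the paper: both telescope the word for $h$ by inserting powers of the strongly dominating vertices $t_i$ so that each factor becomes $t_i^{a}s^{\epsilon}t_i^{-a-\epsilon}$, then invoke Lemma~\ref{lem2} (via Lemma~\ref{lem1}) for the length bound. The only organizational difference is that the paper processes one colour $i$ at a time (inserting powers of a single $t_i$ and iterating over $i$), whereas you handle all colours simultaneously by packaging the height data into the product $g_j=\prod_k t_k^{\phi_j^{(k)}}$ and using the pairwise commuting of the $t_k$ to collapse $g_{j-1}y_jg_j^{-1}$; this is a slightly cleaner bookkeeping of the same idea.
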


\begin{proof}
For each positive integer $n$ let $h$ be an arbitrary element in $H_\Phi$ such that $\abs{h}_S\leq n$. There is a word $w=w_1w_2\cdots w_k$ in $S$ that represent $h$ with the following properties:
\begin{enumerate}
\item The length of $w$ is less than or equal $n$.
\item For each $j$ in $\{1,2,\cdots, k\}$ the subword $w_j$ can be decomposed as $w_j=u_j^{(1)}u_j^{(2)}\cdots u_j^{(d)}$, where each $u_j^{(i)}$ is an empty word or a word in $S_i$.
\end{enumerate}

We will modify $w$ to obtain a new word $\bar{w}=\bar{w}_1\bar{w}_2\cdots \bar{w}_k$ that also represents $h$ with the following properties:

For each $j$ in $\{1,2,\cdots, k\}$ the subword $\bar{w}_j$ can be decomposed as $\bar{w}_j=\bar{u}_j^{(1)}\bar{u}_j^{(2)}\cdots \bar{u}_j^{(d)}$, where each $\bar{u}_j^{(i)}$ is an empty word or a word in $S_i$ such that $\bar{u}_j^{(i)}$ represent an element in $H_T$ and $\abs{\bar{u}_j^{(i)}}_T\leq 2K\bigl(n\ell(u_j^{(i)})+\ell^2(u_j^{(i)})\bigr)$.

Since $\Gamma_1$ is strongly dominating, then there is a vertex $s_1$ in $S_1$ that commutes with all elements in $S-S_1$. For each $j$ in $\{1,2,\cdots, k\}$ each $u_j^{(1)}$ is an empty word or a word in $S_1$. Let $n_j=p_1\circ\Phi(u_j^{(1)})$. Then $\abs{n_j}\leq \ell(u_j^{(1)})$ and $n_1+n_2+\cdots+n_k=0$. For each $j$ in $\{0, 1,2,\cdots, k\}$ we let $m_0=0$ and $m_j=n_1+n_2+\cdots+n_j$. We observe that $m_j=m_{j-1}+n_j$ and $m_k=0$. Let $\bar{u}_j^{(1)}=s_1^{m_{j-1}}u_j^{(1)}s_1^{-m_{j}}$. By Lemma \ref{lem2}, $\bar{u}_j^{(1)}$ represent an element in $H_T$ and $\abs{\bar{u}_j^{(1)}}_T\leq 2K\bigl(\abs{m_{j-1}}\ell(u_j^{(1)})+\ell^2(u_j^{(1)})\bigr)\leq 2K\bigl(n\ell(u_j^{(1)})+\ell^2(u_j^{(1)})\bigr)$. 

We note that the vertex $s_1$ in $S_1$ that commutes with all elements in $S-S_1$. Therefore, 
\begin{align*}
w_j &\equiv_{A_\Gamma}(s_1^{-m_{j-1}}s_1^{m_{j-1}})u_j^{(1)}(s_1^{-m_j}s_1^{m_j})u_j^{(2)}\cdots u_j^{(d)}\\&\equiv_{A_\Gamma}s_1^{-m_{j-1}}(s_1^{m_{j-1}}u_j^{(1)}s_1^{-m_j})(s_1^{m_j}u_j^{(2)}\cdots u_j^{(d)})\\&\equiv_{A_\Gamma}s_1^{-m_{j-1}}(\bar{u}_j^{(1)}u_j^{(2)}\cdots u_j^{(d)})s_1^{m_j}.
\end{align*}

This implies that $h$ can be represented by the word
\[(\bar{u}_1^{(1)}u_1^{(2)}\cdots u_1^{(d)})(\bar{u}_2^{(1)}u_2^{(2)}\cdots u_2^{(d)})\cdots (\bar{u}_k^{(1)}u_k^{(2)}\cdots u_k^{(d)}).\]

By repeating the same process, the element $h$ can be represented by a word $\bar{w}=\bar{w}_1\bar{w}_2\cdots \bar{w}_k$ with the following property:

For each $j$ in $\{1,2,\cdots, k\}$ the subword $\bar{w}_j$ can be decomposed as $\bar{w}_j=\bar{u}_j^{(1)}\bar{u}_j^{(2)}\cdots \bar{u}_j^{(d)}$, where each $\bar{u}_j^{(i)}$ is an empty word or a word in $S_i$ such that $\bar{u}_j^{(i)}$ represent an element in $H_T$ and $\abs{\bar{u}_j^{(i)}}_T\leq 2K\bigl(n\ell(u_j^{(i)})+\ell^2(u_j^{(i)})\bigr)$. Therefore, the word $\bar{w}_j$ represents elements in $H_T$ and $\abs{\bar{w}_j}_T$ is bounded above by $2K\bigl(n\sum_{i=1}^{d}\ell(u_j^{(i)})+\sum_{i=1}^{d}\ell^2(u_j^{(i)}) \bigr)$. This implies that $\abs{\bar{w}_j}_T$ is bounded above by $2K\bigl(n\ell(w_j)+\ell^2(w_j) \bigr)$. Thus, $h$ is an element in $H_T$ and $\abs{h}_T$ is bounded above by $2K\bigl(n\sum_{j=1}^{k}\ell(w_j)+\sum_{j=1}^{k}\ell^2(w_j)\bigr)$. Since $\sum_{j=1}^{k} \ell(w_j)\leq n$, $\abs{h}_T$ is bounded above by $4Kn^2$. Therefore, the distortion of $H_\Phi$ in $A_{\Gamma}$ is at most quadratic.

We now assume that the generalized Bestvina-Brady subgroup $H_\Phi$ is special. Then for each basis subgraph $\Gamma_i$ of $\Gamma$ with respect to $\Phi$ there is a vertex $s_i$ that commutes with all vertices of $\Gamma$. Therefore, the length of the element in $H_T$ represented by the word $\bar{u}_j^{(i)}$ must be less than or equal to $\ell(u_j^{(i)})$ by Lemma \ref{lem2}. Therefore, the length of the element in $H_T$ represented by the word $\bar{w}_j$ is bounded above by $\ell(w_j)$. This implies that $\abs{h}_T$ is bounded above by $\ell(w)$. Therefore, the distortion of $H_\Phi$ in $A_{\Gamma}$ is linear.

\end{proof}

\begin{figure}
\begin{tikzpicture}

\draw (-2,2) node[circle,fill,inner sep=2pt, color=red, label=above:$x_1$](a){} -- (0,2) node[circle,fill,inner sep=2pt, color=red, label=above:$x_2$](a){} -- (2,2) node[circle,fill,inner sep=2pt, color=red, label=above:$x_3$](a){}; 

\draw (-2,2) -- (-2,0); \draw (-2,2) -- (0,0); \draw (0,2) -- (0,0); \draw (0,2) -- (2,0); \draw (2,2) -- (2,0);

\draw (-2,0) node[circle,fill,inner sep=2pt, color=blue, label=below:$x_4$](a){} -- (0,0) node[circle,fill,inner sep=2pt, color=blue, label=below:$x_5$](a){} -- (2,0) node[circle,fill,inner sep=2pt, color=blue, label=below:$x_6$](a){};

\end{tikzpicture}
\caption{Illustration of a right-angled Artin group and its generalised Bestvina-Brady subgroup which is not strong.}
\label{fig4}
\end{figure}

We now compute the lower bound on the distortion of a strong generalized Bestvina-Brady subgroup $H_\Phi$. 
The following proposition is deduced from Theorem \ref{th1}, Theorem \ref{thbc}, and Proposition \ref{pp}.

\begin{prop}
\label{prop1}
Let $\Gamma$ be a simplicial connected graph with at least 2 vertices. Let $H_{\Phi}$ be a strong generalized Bestvina-Brady subgroup. If the graph $\Gamma$ is not a join, then the distortion of $H_{\Phi}$ in $A_{\Gamma}$ is quadratic.
\end{prop}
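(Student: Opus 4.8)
The plan is to sandwich the distortion of $H_\Phi$ between a quadratic upper bound and a quadratic lower bound. The upper bound is immediate: since $H_\Phi$ is strong, Proposition \ref{pp} already gives that the distortion of $H_\Phi$ in $A_\Gamma$ is at most quadratic. So the real work is to establish that the distortion is \emph{at least} quadratic, and for this I would invoke Theorem \ref{th1} together with the Behrstock--Charney computation of Theorem \ref{thbc}.

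To apply Theorem \ref{th1} I must check that $H_\Phi$ is a finitely generated, infinite, infinite-index normal subgroup of $A_\Gamma$. Normality is free, since $H_\Phi=\ker\Phi$. The quotient $A_\Gamma/H_\Phi$ is $\Z^d$, as $\Phi$ is an epimorphism onto $\Z^d$; this is infinite, so $H_\Phi$ has infinite index. Finite generation follows from the finite-generation criterion proved earlier: since $H_\Phi$ is strong, each basis subgraph is connected and strongly dominating, hence connected and dominating, so $H_\Phi$ is finitely generated.

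The one hypothesis needing an argument is that $H_\Phi$ is infinite, and here is where the assumption that $\Gamma$ is not a join enters. I would argue that at least one basis subgraph $\Gamma_i$ has two or more vertices. Indeed, if every basis subgraph were a single vertex, then, being strongly dominating, each such vertex would be adjacent to all other vertices of $\Gamma$, forcing $\Gamma$ to be a complete graph, hence a join on at least two vertices, contradicting the hypothesis. So fix a basis subgraph $\Gamma_i$ with at least two vertices; since $\Gamma_i$ is connected it contains an edge $uv$, and then $uv^{-1}$ lies in $H_\Phi$ (both endpoints carry the same label $e_i$) and has infinite order in the copy of $\Z^2$ generated by the adjacent, hence commuting, pair $u,v$. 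Thus $H_\Phi$ is infinite.

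With all hypotheses verified, Theorem \ref{th1} yields $Div(A_\Gamma)\preceq Dist_{A_\Gamma}^{H_\Phi}$. Since $\Gamma$ is connected with at least two vertices and is not a join, Theorem \ref{thbc} gives that $Div(A_\Gamma)$ is quadratic; hence $Dist_{A_\Gamma}^{H_\Phi}$ dominates a quadratic function and is therefore at least quadratic. Combining this with the upper bound from Proposition \ref{pp} shows the distortion is exactly quadratic. The only step requiring genuine care is the infiniteness argument, namely ruling out the degenerate case where every basis subgraph is a single vertex; everything else is a direct citation of the stated results.
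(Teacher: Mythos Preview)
Your proof is correct and follows exactly the same route as the paper, which simply states that the proposition is deduced from Theorem~\ref{th1}, Theorem~\ref{thbc}, and Proposition~\ref{pp}. You have additionally supplied the verification that $H_\Phi$ is infinite (ruling out the case where every basis subgraph is a single vertex), a detail the paper leaves implicit.
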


\begin{rem}
In the above proposition, we see the equivalence between subgroup distortion of a strong generalized Bestvina-Brady subgroup $H_{\Phi}$ in $A_{\Gamma}$ and the divergence of group $A_\Gamma$. However, this equivalence does not always occur if $\Gamma$ is a join graph. In the rest of this section, we will investigate the distortion of strong generalized Bestvina-Brady subgroups.
\end{rem}

\begin{thm}
Let $\Gamma$ be a simplicial connected graph. Let $H_\Phi$ be a strong generalised Bestvina-Brady subgroup of $A_\Gamma$ with basis subgraphs $\{\Gamma_i\}_{1\leq i \leq d}$. Then, the distortion of $H_\Phi$ in $A_\Gamma$ is quadratic if there is a non-empty subset $I$ of $\{1,2, \cdots, d\}$ such that the subgraph $\Gamma'$ generated by $\{\Gamma_i\}_{i\in I}$ is not a join. Moreover, the distortion of $H_\Phi$ in $A_\Gamma$ is linear if one of the following conditions holds:
\begin{enumerate}
\item The generalised Bestvina-Brady subgroup $H_\Phi$ is special.
\item Each basis subgraphs in $\{\Gamma_i\}_{1\leq i \leq d}$ is a join and each pair of basis subgraphs in $\{\Gamma_i\}_{1\leq i \leq d}$ commutes. 
\end{enumerate}
\end{thm}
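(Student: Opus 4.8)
The plan is to treat the upper and lower bounds separately, leaning on Proposition~\ref{pp} for everything except one lower bound and one direct computation. Proposition~\ref{pp} already gives that a strong $H_\Phi$ has distortion at most quadratic, and that a special $H_\Phi$ has linear distortion; so case~(1) of the linear assertion is immediate, and in the quadratic assertion only the quadratic \emph{lower} bound remains to be proved. Thus the two substantive tasks are: (a) produce a quadratic lower bound for $Dist^{H_\Phi}_{A_\Gamma}$ whenever some $\{\Gamma_i\}_{i\in I}$ generates a non-join $\Gamma'$, and (b) establish the linear bound under hypothesis~(2).

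For (a), I would not apply Theorem~\ref{th1} to the pair $(A_\Gamma, H_\Phi)$ directly, since $A_\Gamma$ may be a join and hence have only linear divergence; instead I would pass to the special subgroup $A_{\Gamma'}$. Set $H'_\Phi = H_\Phi \cap A_{\Gamma'} = \ker(\Phi|_{A_{\Gamma'}})$. First I would check that $\Gamma'$ is connected: since each $\Gamma_i$ is strongly dominating, each carries a vertex $u_i$ adjacent to every vertex outside $\Gamma_i$, so the $u_i$ ($i\in I$) form a clique joining the connected pieces $\Gamma_i$. Next, restricting $\Phi$ exhibits $H'_\Phi$ as a strong generalized Bestvina-Brady subgroup of $A_{\Gamma'}$ (the $\Gamma_i$, $i\in I$, remain connected and strongly dominating inside $\Gamma'$), so $H'_\Phi$ is finitely generated and normal, with $A_{\Gamma'}/H'_\Phi \cong \Z^{\abs{I}}$ of infinite index; its infiniteness follows because a connected non-join graph on at least two vertices is not complete, so $A_{\Gamma'}$ is non-abelian and $[A_{\Gamma'},A_{\Gamma'}] \subseteq H'_\Phi$ is infinite. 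With $\Gamma'$ connected, on at least two vertices, and not a join, Theorem~\ref{thbc} makes $Div(A_{\Gamma'})$ quadratic, and Theorem~\ref{th1} then gives that $Dist^{H'_\Phi}_{A_{\Gamma'}}$ dominates a quadratic function. Finally I would transfer this lower bound to $A_\Gamma$: the vertex set of $\Gamma'$ is contained in that of $\Gamma$, so word length can only decrease under inclusion, while the right-angled Artin retraction $r\colon A_\Gamma \to A_{\Gamma'}$ killing the vertices outside $\Gamma'$ satisfies $r(H_\Phi)\subseteq H'_\Phi$ and restricts to the identity on $H'_\Phi$, hence is a Lipschitz retraction of $H_\Phi$ onto $H'_\Phi$. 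Combining these two observations yields $Dist^{H'_\Phi}_{A_{\Gamma'}} \preceq Dist^{H_\Phi}_{A_\Gamma}$, completing the quadratic lower bound.

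For (b), the commuting hypothesis forces $\Gamma = \Gamma_1 * \cdots * \Gamma_d$, so $A_\Gamma = A_{\Gamma_1}\times \cdots \times A_{\Gamma_d}$ and $\Phi$ splits as the product of the exponent-sum maps $\phi_i\colon A_{\Gamma_i}\to\Z$; consequently $H_\Phi = N_1\times\cdots\times N_d$, where $N_i=\ker\phi_i$ is the Bestvina-Brady subgroup of $A_{\Gamma_i}$. Since word length in a direct product is comparable to the sum of the coordinate word lengths (in both $A_\Gamma$ and $H_\Phi$), the distortion $Dist^{H_\Phi}_{A_\Gamma}$ is equivalent to $\max_i Dist^{N_i}_{A_{\Gamma_i}}$. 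Each $\Gamma_i$ is connected and, by hypothesis, a join, so Corollary~\ref{c1} (the Bestvina-Brady case) gives that each $Dist^{N_i}_{A_{\Gamma_i}}$ is linear; hence their maximum, and therefore $Dist^{H_\Phi}_{A_\Gamma}$, is linear.

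The step I expect to be the main obstacle is the transfer in (a): one must be careful that $H'_\Phi$ really meets every hypothesis of Theorem~\ref{th1} --- in particular that it is infinite (the commutator argument) and that the retraction $r$ carries the chosen finite generating set of $H_\Phi$ to boundedly many generators of $H'_\Phi$, which is what makes the inequality $Dist^{H'_\Phi}_{A_{\Gamma'}} \preceq Dist^{H_\Phi}_{A_\Gamma}$ legitimate. A minor point worth isolating is the degenerate case in which $\{\Gamma_i\}_{i\in I}$ generates a single vertex: such a $\Gamma'$ is vacuously ``not a join'' but yields $H'_\Phi$ trivial and linear divergence, so the quadratic hypothesis should be read as producing a $\Gamma'$ with at least two vertices (equivalently, with quadratic divergence), and I would note this explicitly at the start of the argument.
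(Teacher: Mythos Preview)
Your proposal is correct and follows essentially the same route as the paper: pass to the special subgroup $A_{\Gamma'}$, invoke Theorem~\ref{thbc} and Theorem~\ref{th1} (the paper packages this as Proposition~\ref{prop1}) for the quadratic lower bound, then use the retraction $A_\Gamma\to A_{\Gamma'}$ to show $H_{\Phi'}$ is undistorted in $H_\Phi$ and transfer the bound; the linear cases are handled identically via Proposition~\ref{pp} and the direct-product decomposition. Your explicit verification that $\Gamma'$ is connected and that $H'_\Phi$ is infinite, and your flagging of the single-vertex degeneracy, are careful additions the paper leaves implicit.
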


\begin{proof}
We assume that there is a non-empty subset $I$ of $\{1,2, \cdots, d\}$ such that the subgraph $\Gamma'$ generated by $\{\Gamma_i\}_{i\in I}$ is not a join. We will prove that the distortion of $H_\Phi$ in $A_\Gamma$ is quadratic. We recall that each $S_i$ is the set of vertices of $\Gamma$ that is mapped to $e_i=(0,0,\cdots, 1^{i th}, \cdots, 0)$ in $\Z^d$ via $\Phi$ and each basis subgraph $\Gamma_i$ is the induced subgraph of $\Gamma$ with vertex set $S_i$. Each $T_i$ is the set of group elements of the form $uv^{-1}$, where $u$ and $v$ are two adjacent vertices in $\Gamma_i$ and $T$ is the union of all sets $T_i$. Let $S'$ be the union of all sets in $\{S_i\}_{i\in I}$. Then $\Gamma'$ is the induced subgraph with vertex set $S'$. 

Let $d'$ be the number of elements in $I$. We can consider $\Z^{d'}$ as the subgroup of $Z^d$ generated by $\{e_i\}_{i\in I}$. Then there is a group homomorphism $\Phi'$ from $\Gamma'$ to $\Z^{d'}$ such that $\Phi'=\Phi_{|A_{\Gamma'}}$. Therefore, the generalised Bestvina-Brady subgroup $H_{\Phi'}$ of $A_{\Gamma'}$ is also the subgroup of $H_\Phi$ and $\{\Gamma_i\}_{i\in I}$ is the collection of basis subgraphs of $\Gamma$ with respect to $\Phi'$. Since each graph $\Gamma_i$ is connected and strongly dominating in $\Gamma$, each graph in $\{\Gamma_i\}_{i\in I}$ is also connected and strongly dominating in $\Gamma'$. Therefore, the generalised Bestvina-Brady subgroup $H_{\Phi'}$ of $A_{\Gamma'}$ is strong. Also, the subgraph $\Gamma'$ is not a join. Therefore, the distortion of $H_{\Phi'}$ in $A_{\Gamma'}$ is quadratic by Proposition \ref{prop1}. Also, the distortion of $A_{\Gamma'}$ in $A_{\Gamma}$ is linear. Therefore, the distortion of $H_{\Phi'}$ in $A_{\Gamma}$ is quadratic. 

We now prove that the distortion of $H_{\Phi'}$ in $H_{\Phi}$ is linear. Let $T'$ be the union of all sets in $\{T_i\}_{i\in I}$. Then $T'$ is a finite generating set for $H_\Phi'$. There is a group homomorphism $f$ from $A_{\Gamma}$ to $A_{\Gamma'}$ that maps each vertex of $\Gamma-\Gamma'$ to the identity and each vertex of $\Gamma'$ to itself. Therefore, the restriction of $f$ on $A_{\Gamma'}$ is the identity map. Moreover, the maps $f$ maps each element in $T'$ to itself and element in $T-T'$ to the identity. This fact implies that the distortion of $H_{\Phi'}$ in $H_{\Phi}$ is linear. Therefore, the distortion of $H_{\Phi}$ in $A_{\Gamma}$ is least quadratic. By Proposition \ref{pp}, the distortion of $H_{\Phi}$ in $A_{\Gamma}$ is exactly quadratic. 

If $H_\Phi$ is a special generalised Bestvina-Brady subgroup, then the distortion of $H_{\Phi}$ in $A_{\Gamma}$ is linear by Proposition \ref{pp}. We now assume that each basis subgraphs in $\{\Gamma_i\}_{1\leq i \leq d}$ is a join and each pair of basis subgraphs in $\{\Gamma_i\}_{1\leq i \leq d}$ commutes. In this case, the group $A_{\Gamma}$ can be written as the direct product $A_{\Gamma_1}\times A_{\Gamma_2}\times \cdots \times A_{\Gamma_d}$ and the subgroup $H_\Phi$ can be written as the direct product $H_{\Gamma_1}\times H_{\Gamma_2}\times \cdots \times H_{\Gamma_d}$, where each $H_{\Gamma_i}$ is the Bestvina-Brady subgroup of $A_{\Gamma_i}$. Since each graph $\Gamma_i$ is a join, the distortion of $H_{\Gamma_i}$ in $A_{\Gamma_i}$ is linear (see \cite{Tran}). Therefore, the distortion of $H_{\Phi}$ in $A_{\Gamma}$ is linear. 
\end{proof}

\begin{rem}
By the above theorem, we observe that the group divergence and the subgroup distortion can be equivalent for some cases (see the graph on the left of Figure \ref{fig1} or the graph on the right of Figure \ref{fig2} for example). However, the group divergence and the subgroup distortion are not necessarily equivalent (see the graph on the left of Figure \ref{fig2} for example). 
\end{rem}






We now compute the distortion of one generalised Bestvina-Brady subgroup which is not strong. More precisely, we compute the distortion of the generalised Bestvina subgroup illustrated by the middle graph of Figure \ref{fig1} (we call graph $\Gamma$). We label each vertex of this graph as in Figure \ref{fig4}. Obviously, the graph $\Gamma$ is not a join. Therefore, the distortion of the subgroup $H_\Phi$ is at least quadratic by Theorem \ref{th1} and Theorem \ref{thbc}. We now use Theorem \ref{theo1} to prove the upper bound for the distortion of $H_\Phi$. Let $G_1$ be the subgroup generated by $\{x_1,x_4,x_5\}$, $G_2$ the subgroup generated by $\{x_1,x_2,x_5\}$, $G_3$ the subgroup generated by $\{x_2,x_5,x_6\}$, and $G_4$ the subgroup generated by $\{x_2,x_3,x_6\}$. Then the collection $\mathcal{A}=\{G_1, G_2, G_3, G_4\}$ and the normal subgroup $H_\Phi$ satisfy the hypothesis of Theorem \ref{theo1}. It is not hard to see that the distortion of each subgroup $G_i \cap H_\Phi$ is undistorted in $G_i$. Therefore, the distortion of the subgroup $H_\Phi$ in $A_\Gamma$ is at most quadratic. Thus, the distortion of the subgroup $H_\Phi$ in $A_\Gamma$ is exactly quadratic.

\begin{conj}
Let $\Gamma$ be a simplicial graph and $H_\Phi$ some finitely generated generalised Bestvina-Brady subgroup of $A_\Gamma$. Then the distortion of $H_\Phi$ in $A_\Gamma$ is linear or quadratic.
\end{conj}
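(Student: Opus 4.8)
The plan is to handle the quadratic conclusion and the two linear conclusions separately, leaning on Proposition~\ref{pp} for the upper bounds and on Theorem~\ref{th1} (divergence as a lower bound) packaged through Proposition~\ref{prop1} for the lower bound. Throughout, recall that $T_i=\set{uv^{-1}}{u,v \text{ adjacent in }\Gamma_i}$, that $T=\bigcup_i T_i$ generates $H_\Phi$, and that $S$ denotes the vertex set of $\Gamma$.

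For the quadratic statement, fix a non-empty $I\subseteq\{1,\dots,d\}$ with the generated subgraph $\Gamma'$ not a join. First I would pass to the sub-right-angled Artin group $A_{\Gamma'}$. The restriction $\Phi'=\Phi|_{A_{\Gamma'}}$ surjects onto the subgroup $\gen{e_i:i\in I}\cong\Z^{\abs{I}}$ of $\Z^d$, and its kernel $H_{\Phi'}$ is a generalized Bestvina-Brady subgroup of $A_{\Gamma'}$ whose basis subgraphs are exactly $\{\Gamma_i\}_{i\in I}$; since each such $\Gamma_i$ is connected and strongly dominating in $\Gamma$, it remains so in $\Gamma'$, so $H_{\Phi'}$ is strong. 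As $\Gamma'$ is not a join, Proposition~\ref{prop1} gives that $H_{\Phi'}$ is quadratically distorted in $A_{\Gamma'}$; and because $A_{\Gamma'}$ sits on an induced subgraph, it is undistorted in $A_\Gamma$, so $H_{\Phi'}$ is quadratically distorted in $A_\Gamma$ as well.

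The heart of the argument is transferring this lower bound from $H_{\Phi'}$ to $H_\Phi$. For this I would use the canonical retraction $f\colon A_\Gamma\to A_{\Gamma'}$ that fixes every vertex of $\Gamma'$ and sends every other vertex to the identity. Writing $T'=\bigcup_{i\in I}T_i$, one checks that $f$ sends each generator in $T_i$ with $i\in I$ to itself and each generator in $T_i$ with $i\notin I$ to the identity (both endpoints lie outside $\Gamma'$). Hence $f$ restricts to a $1$-Lipschitz retraction of $(H_\Phi,T)$ onto $(H_{\Phi'},T')$, giving $\abs{h}_{T'}\le\abs{h}_T$ for every $h\in H_{\Phi'}$. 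Thus any sequence witnessing quadratic distortion of $H_{\Phi'}$ in $A_\Gamma$ also witnesses at-least-quadratic distortion of $H_\Phi$ in $A_\Gamma$; together with the at-most-quadratic bound of Proposition~\ref{pp} this pins the distortion down to quadratic. I expect this transfer step to be the main obstacle: one must verify both that $A_{\Gamma'}$ is genuinely undistorted and that $f$ respects the commutator-type generators $T$ and $T'$, rather than merely the vertex generators.

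For the linear conclusions, case~(1) is immediate from Proposition~\ref{pp}, which already yields linear distortion whenever $H_\Phi$ is special. For case~(2), the hypothesis that the basis subgraphs pairwise commute and partition $S$ forces $\Gamma$ to be the join of $\Gamma_1,\dots,\Gamma_d$, so $A_\Gamma=A_{\Gamma_1}\times\cdots\times A_{\Gamma_d}$ and, under this splitting, $\Phi$ becomes $(\phi_1,\dots,\phi_d)$ with each $\phi_i\colon A_{\Gamma_i}\to\Z$ sending every vertex to $1$. Consequently $H_\Phi=H_{\Gamma_1}\times\cdots\times H_{\Gamma_d}$, where each $H_{\Gamma_i}$ is the ordinary Bestvina-Brady subgroup of $A_{\Gamma_i}$. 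Since every $\Gamma_i$ is a join, Corollary~\ref{c1} (equivalently \cite{Tran}) shows $H_{\Gamma_i}$ is linearly distorted in $A_{\Gamma_i}$, and as the distortion of a product of subgroups inside a direct product of groups equals the maximum of the factor distortions, $H_\Phi$ is linearly distorted in $A_\Gamma$.
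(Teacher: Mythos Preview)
The statement you were asked to prove is a \emph{conjecture}, not a theorem: the paper states it as an open problem at the very end of Section~\ref{GBBS} and offers no proof. There is therefore no ``paper's own proof'' to compare against.

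More importantly, your proposal is not a proof of this conjecture at all. You have written a proof sketch of the preceding \emph{theorem} in the paper --- the one about \emph{strong} generalised Bestvina--Brady subgroups under the specific case hypotheses (a subset $I$ with $\Gamma'$ not a join for the quadratic case; special, or pairwise-commuting join basis subgraphs, for the linear cases). Your argument follows that theorem's proof essentially line for line, and for that theorem it is fine. But the conjecture is far broader: it drops the word ``strong'', drops all the case hypotheses, and asserts that \emph{every} finitely generated generalised Bestvina--Brady subgroup of any $A_\Gamma$ has distortion that is either linear or quadratic. Your argument relies throughout on Proposition~\ref{pp} for the upper bound, and that proposition requires the strong hypothesis; it also relies on the case split (non-join $\Gamma'$, special, or commuting joins), none of which is assumed in the conjecture. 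Nothing in your proposal touches the general case of a merely finitely generated $H_\Phi$ whose basis subgraphs are connected and dominating but not strongly dominating --- the paper itself only handles one such example by hand (the middle graph of Figure~\ref{fig1}) and then poses the conjecture precisely because the general case remains open.
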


\bibliographystyle{alpha}
\bibliography{Tran}
\end{document}